\documentclass{amsart}
\usepackage[utf8]{inputenc}
\usepackage{amssymb}
\usepackage{amsmath}
\usepackage{amsthm, amsfonts, mathrsfs}
\usepackage{amsmath}
\usepackage{amssymb}
\usepackage{amsthm}
\usepackage{graphicx}

\usepackage{amsrefs}
\newtheorem{thm}{Theorem}[section]
\newtheorem{cor}[thm]{Corollary}

\newtheorem{prop}[thm]{Proposition}

\newtheorem{qu}[thm]{Question}
\newtheorem{ex}[thm]{Example}

\theoremstyle{plain}
\newtheorem{theorem}{Theorem}[section]
\newtheorem{lemma}[theorem]{Lemma}
\newtheorem{proposition}[theorem]{Proposition}

\newtheorem{definition}[theorem]{Definition}

\theoremstyle{definition}

\theoremstyle{remark}

\newtheorem{fact}{Fact}

\numberwithin{equation}{section}

\DeclareMathOperator{\val}{\mathop{val}}

\begin{document}
\title{Compact Spaces with a P-base}

\author[A. Dow]{Alan Dow}
\address{UNC Charlotte}
\email{adow@uncc.edu}

\author[Z. Feng]{Ziqin Feng}
\address{Department of Mathematics and Statistics\\Auburn University\\Auburn, AL~36849}
\email{zzf0006@auburn.edu}

\begin{abstract} In the paper, we investigate (scattered) compact spaces with a $P$-base for some poset $P$. More specifically, we prove that, under the assumption $\omega_1<\mathfrak{b}$, any compact space with an $\omega^\omega$-base is first-countable  and any scattered compact space with an $\omega^\omega$-base is countable.%Also, in ZFC, we prove that if a campact space with countable scattered height has an $\omega^\omega$-base, it is countable.
These give positive solutions to Problems 8.6.9 and 8.7.7 in \cite{Banakh2019}. Using forcing, we also prove that in a model of $\omega_1<\mathfrak{b}$, there is a non-first-countable compact space with a $P$-base for some poset $P$ with calibre~$\omega_1$.  \end{abstract}

\date{\today}
\keywords{(Scattered) compact spaces, scattered height, Tukey order, Calibre~$\omega_1$, Calibre~$(\omega_1, \omega)$, convergent free sequence, $\omega^\omega$-base, countable tightness }

\subjclass[2010]{54D30, 46B50}
\maketitle
\section{Introduction}

Let $P$ be a partially ordered set. A topological space $X$ is defined to have a neighborhood $P$-base at $x\in X$ if there exists a neighborhood base $(U_p[x])_{p\in P}$ at $x$ such that $U_p[x]\subset U_{p'}[x]$ for all $p\geq p'$ in $P$. We say that a topological space has a $P$-base if it has a neighborhood $P$-base at each $x\in X$. All topological spaces in this paper are regular.

We will use Tukey order to compare the cofinal complexity of posets. The Tukey order \cite{Tuk40} was originally introduced, early in the 20th century, as a tool to
understand convergence in general topological spaces, however it was quickly seen to have broad
applicability in comparing partial orders.  Given two directed sets $P$ and $Q$, we say $Q$ is a Tukey quotient of $P$, denoted by $P\geq_T Q$, if there is a map $\phi:P\rightarrow Q$ carrying cofinal subsets of $P$ to cofinal subsets of $Q$. In our context, where $P$ and $Q$ are both Dedekind complete (every bounded subset has the least upper bound), we have $P\geq_T Q$ if and only if there is a map $\phi: P\rightarrow Q$ which is order-preserving and such that $\phi(P)$ is cofinal in $Q$. If $P$ and $Q$ are mutually Tukey quotients, we say that $P$ and $Q$ are Tukey equivalent, denoted by $P=_T Q$. It is straightforward to see that a topological space $X$ has a $P$-base if and only if $\mathcal{T}_x(X)\leq_T P$ for each $x\in X$, here, $\mathcal{T}_x(X)=\{U: U \text{ is an open neighborhood of }x\}$.

 Topological spaces and function spaces with an $\omega^\omega$-base were systematically studied in \cite{Banakh2019}. Lots of work about the $\omega^\omega$-base in topological groups  have been done in \cite{BL18}, \cite{GKL15}, \cite{LPT17}, and \cite{SF20}. In this paper we investigate the Tukey reduction of a $P$-base in some (scattered) compact spaces with $P$ satisfying some Calibre conditions.   This paper is organized in the following way.

 In Section~\ref{compact}, we show that if $P$ has Calibre~$\omega_1$, then any compact space with a $P$-base is countable tight. Furthermore, we prove that if a compact space with countable tightness has  a $\mathcal{K}(M)$-base for some separable metric space $M$, then it is first-countable. As a corollary, any compact space with an $\omega^\omega$-base is first-countable under the assumption $\omega_1<\mathfrak{b}$. This gives a positive answer to Problem 8.7.7 in \cite{Banakh2019}. In Section~\ref{scacompact}, we address Problem 8.6.9  in \cite{Banakh2019} positively by showing that any scattered compact space with an $\omega^\omega$-base is countable under the assumption $\omega_1<\mathfrak{b}$. %and any compact space with countable scattered height is countable.
 It is natural to ask whether under the assumption $\omega_1<\mathfrak{b}$  any compact with a $P$-base is first-countable if $P$ satisfies some Calibre properties, for example, Calibre~$\omega_1$. In Section~\ref{calbreo1}, we prove that in a model of Martin's Axiom in which $\omega_1<\mathfrak{b}$, there is a non-first-countable compact space with a $P$-base for some poset $P$ with calibre~$\omega_1$.

 \section{Preliminaries}

For any separable metric space $M$, $\mathcal{K}(M)$ is the collection of compact subsets of $M$ ordered by set-inclusion. Fremlin observed that if a separable metric space $M$ is locally compact, then $\mathcal{K}(M)=_T\omega$. Its unique successor under Tukey order is the class of Polish but not locally compact spaces. For $M$ in this class, $\mathcal{K}(M)=_T\omega^\omega$ where $\omega^\omega$ is ordered by $f\leq g$ if $f(n)\leq g(n)$ for each $n\in \omega$. In \cite{GM16}, Gartside and Mamataleshvili constructed a $2^{\mathfrak{c}}$-sized antichain in $\mathcal{K}(\mathcal{M})=\{\mathcal{K}(M): M\in \mathcal{M}\}$ where $\mathcal{M}$ is the set of separable metric spaces. %This upper bound of $\K(\Q)$ is essential in our proof.

Let $P$ be a directed poset, i.e. for any points $p, p'\in P$, there exists a point $q\in P$ such that $p\leq q$ and $p'\leq q$.  A subset $C$ of $P$ is \emph{cofinal} in $P$ if for any $p\in P$, there exists a $q\in C$ such that $p\leq q$. Then $\text{cof}(P)=\min\{|C|: C\text{ is cofinal in }P\}$. We also define $\text{add}(P)=\min\{|Q|: Q \text{ is unbounded in }P\}$. For any $f, g\in \omega^\omega$, we say that $f\leq^\ast g$ if the set $\{n\in\omega:f(n)>g(n) \}$ is finite. Then $\mathfrak{b}=\text{add}(\omega^\omega, \leq^\ast)$ and $\mathfrak{d}=\text{cof}(\omega^\omega, \leq^\ast)$. See \cite{Douwen84} for more information about small cardinals.

Let $\kappa\geq \mu\geq \lambda$ be cardinals. We say that a poset $P$ has \emph{calibre~$(\kappa, \mu, \lambda)$} if for every $\kappa$-sized subset $S$ of $P$ there is a $\mu$-sized subset $S_0$ such that every $\lambda$-sized subset of $S_0$ has an upper bound in $P$. We write calibre~$(\kappa, \mu, \mu)$ as calibre~$(\kappa, \mu)$ and calibre~$(\kappa, \kappa, \kappa)$ as calibre~$\kappa$. It is known that $\mathcal{K}(M)$ has Calibre~$(\omega_1, \omega)$ for any separable metric space $M$, hence so does $\omega^\omega$. Under the assumption $\omega_1=\mathfrak{b}$, $\omega_1$ is a Tukey quotient of $\omega^\omega$. Furthermore, under the assumption $\omega_1<\mathfrak{b}$, the poset $\omega^\omega$ has Calibre~$(\omega_1, \omega_1,
\omega_1)$, i.e. Calibre~$\omega_1$.    We will use this fact in several places of this paper.

It is clear that if $P\leq_T Q$ and $Q\leq_T R$ then $P\leq_T R$ for any posets $P, Q,$ and $R$. So we get the following proposition.

\begin{prop} Let $P$ and $Q$ be posets such that $P\leq_T Q$. Then if a space $X$ has a neighborhood $P$-base at $x\in X$, then $X$  also has a neighborhood $Q$-base at $x$. Hence, any space with a $P$-base also has a $Q$-base. \end{prop}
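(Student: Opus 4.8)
The plan is to reduce everything to the equivalence recorded just above the proposition, namely that $X$ has a neighborhood $P$-base at $x$ if and only if $\mathcal{T}_x(X)\leq_T P$, together with the transitivity of the Tukey order. First I would fix $x\in X$ and assume $X$ has a neighborhood $P$-base at $x$. By the stated equivalence this is exactly the assertion $\mathcal{T}_x(X)\leq_T P$, where $\mathcal{T}_x(X)$ is ordered by reverse inclusion. By hypothesis $P\leq_T Q$, and since $\leq_T$ is transitive (as noted immediately before the proposition), I conclude $\mathcal{T}_x(X)\leq_T Q$. Applying the equivalence once more, now in the other direction, yields a neighborhood $Q$-base at $x$. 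This proves the first assertion; the second (``hence'') assertion follows by running this argument at every point, since a $P$-base is precisely a neighborhood $P$-base at each $x\in X$ and the construction produces a neighborhood $Q$-base at each such $x$.

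For robustness I would also want to exhibit explicitly the map witnessing the reduction. Unwinding the Tukey-quotient definitions, the relation $P\leq_T Q$ gives a map $\psi:Q\to P$ carrying cofinal subsets of $Q$ to cofinal subsets of $P$; in the Dedekind-complete setting $\psi$ may be taken order-preserving with $\psi(Q)$ cofinal in $P$. Given the $P$-base $(U_p[x])_{p\in P}$, the natural candidate is $V_q[x] := U_{\psi(q)}[x]$. Monotonicity is immediate from that of $\psi$ and of the $U_p$'s: if $q\geq q'$ then $\psi(q)\geq\psi(q')$, so $V_q[x]=U_{\psi(q)}[x]\subset U_{\psi(q')}[x]=V_{q'}[x]$. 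That $(V_q[x])_{q\in Q}$ is a neighborhood base follows from cofinality of $\psi(Q)$: any neighborhood $W$ of $x$ contains some $U_p[x]$, and cofinality yields $q$ with $\psi(q)\geq p$, whence $V_q[x]\subset U_p[x]\subset W$.

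The only point requiring care -- and the one I would flag as the main (mild) obstacle -- is keeping the variance straight: the order on $\mathcal{T}_x(X)$ is by reverse inclusion, so an order-preserving map into it corresponds to an inclusion-reversing assignment of neighborhoods, matching the convention $U_p[x]\subset U_{p'}[x]$ for $p\geq p'$. Once the directions of $\leq_T$ and of the neighborhood order are aligned, both the abstract transitivity argument and the explicit composition go through with no further work, as there is no topological content beyond the observation that a cofinal image of a neighborhood base is again a neighborhood base.
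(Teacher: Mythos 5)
Your proof is correct and is essentially the paper's own argument: the paper derives the proposition in one line from the transitivity of $\leq_T$ together with the stated equivalence that a neighborhood $P$-base at $x$ is the same as $\mathcal{T}_x(X)\leq_T P$, which is exactly your first paragraph. Your explicit construction of $V_q[x]=U_{\psi(q)}[x]$ is a harmless elaboration (note only that taking $\psi$ monotone uses Dedekind completeness, which for arbitrary posets $P$, $Q$ is supplied by $\mathcal{T}_x(X)$ itself, as in your abstract argument, rather than by $P$).
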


\begin{prop}\label{sub} If $X$ has a $P$-base, then any subspace of $X$ also has a $P$-base.

\end{prop}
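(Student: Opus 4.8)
The plan is to show that the neighborhood $P$-base structure on $X$ restricts naturally to any subspace. Let $Y \subseteq X$ be a subspace with the subspace topology, and fix $y \in Y$. Since $y \in X$ and $X$ has a $P$-base, there is a neighborhood $P$-base $(U_p[y])_{p \in P}$ at $y$ in $X$, meaning $U_p[y] \subseteq U_{p'}[y]$ whenever $p \ge p'$, and $\{U_p[y] : p \in P\}$ is a neighborhood base at $y$ in $X$. First I would define the candidate neighborhood base at $y$ in $Y$ by intersecting with $Y$, setting $V_p[y] = U_p[y] \cap Y$ for each $p \in P$.

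Next I would verify the two required properties. The monotonicity is immediate: if $p \ge p'$ in $P$, then $U_p[y] \subseteq U_{p'}[y]$ gives $V_p[y] = U_p[y] \cap Y \subseteq U_{p'}[y] \cap Y = V_{p'}[y]$, so the family is order-reversing in exactly the sense required. Then I would check that $(V_p[y])_{p \in P}$ is a neighborhood base at $y$ in $Y$. Each $V_p[y]$ is a neighborhood of $y$ in $Y$ by the definition of the subspace topology, since $U_p[y]$ is a neighborhood of $y$ in $X$. Conversely, given any open neighborhood $W$ of $y$ in $Y$, by definition of the subspace topology there is an open set $\widetilde{W}$ in $X$ with $W = \widetilde{W} \cap Y$; since $\{U_p[y]\}$ is a base at $y$ in $X$, there is some $p \in P$ with $U_p[y] \subseteq \widetilde{W}$, whence $V_p[y] = U_p[y] \cap Y \subseteq \widetilde{W} \cap Y = W$.

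Since $y \in Y$ was arbitrary, this produces a neighborhood $P$-base at every point of $Y$, so $Y$ has a $P$-base, completing the argument.

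I do not anticipate any serious obstacle here: the proof is a routine transfer of the base structure through the subspace topology, and the only point requiring care is the bookkeeping of the order direction in the monotonicity condition (ensuring that $p \ge p'$ yields the containment $U_p[y] \subseteq U_{p'}[y]$ and hence $V_p[y] \subseteq V_{p'}[y]$), which follows directly from the definition of a neighborhood $P$-base.
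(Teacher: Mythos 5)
Your proof is correct: restricting the neighborhood $P$-base at each point of the subspace by intersecting with $Y$, then checking monotonicity and the base property in the subspace topology, is exactly the routine verification intended here. The paper in fact states this proposition without proof, treating it as immediate, and your argument fills in precisely the standard details with the order direction handled correctly.
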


%\begin{prop} If $X$ has a neighborhood $P$-base at $x\in X$ and $Y$ has a neighborhood $Q$-base at $y\in Y$, then $X\times Y$ has a neighborhood $P\times Q$-base at $(x, y)$. \end{prop}

%\begin{prop} Let $P$ be a poset satisfying $P=_T\omega\times P$. Then $\prod_{n\in \omega}X_n$ has a $P$-base if $X_n$ has a $P$-base for each $n\in\omega$.  \end{prop}

 \begin{prop} Let $P$ be a poset with $\omega_1\leq_T P$ and $P=_T\omega\times P$. Then the space $\omega_1+1$ has a $P$-base.
 \end{prop}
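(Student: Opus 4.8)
The plan is to reduce the statement to the pointwise Tukey criterion recorded in the preliminaries, namely that $X=\omega_1+1$ has a $P$-base if and only if $\mathcal{T}_x(X)\leq_T P$ for every $x\in X$. So I would first classify the points of $\omega_1+1$ (with the order topology) according to the Tukey type of the neighborhood filter $\mathcal{T}_x(X)$, ordered by reverse inclusion. The isolated points are $0$ and the successor ordinals; each has a least neighborhood $\{x\}$, so $\mathcal{T}_x(X)=_T 1$. Every \emph{countable} limit ordinal $\alpha$ has cofinality $\omega$, so for a sequence $\gamma_n\nearrow\alpha$ the sets $(\gamma_n,\alpha]$ form a decreasing cofinal $\omega$-chain, giving $\mathcal{T}_\alpha(X)=_T\omega$. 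Finally the top point has character $\omega_1$: the family $\{(\beta,\omega_1]:\beta<\omega_1\}$ is cofinal in $\mathcal{T}_{\omega_1}(X)$ and order-isomorphic to $\omega_1$, so $\mathcal{T}_{\omega_1}(X)=_T\omega_1$.

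It then suffices to verify the three inequalities $1\leq_T P$, $\omega\leq_T P$, and $\omega_1\leq_T P$. The first is immediate, since $P$ is nonempty. The third is exactly the hypothesis $\omega_1\leq_T P$, which handles the single point $\omega_1$. For the middle inequality I would invoke the second hypothesis: the first-coordinate projection $\pi:\omega\times P\to\omega$ sends cofinal subsets of $\omega\times P$ to cofinal subsets of $\omega$, because (fixing any $p_0\in P$) a cofinal subset of $\omega\times P$ must already be unbounded in its first coordinate. Hence $\omega\times P\geq_T\omega$, and combining this with $P=_T\omega\times P$ and transitivity of $\geq_T$ yields $P\geq_T\omega$, i.e.\ $\omega\leq_T P$. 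Assembling the three inequalities through the pointwise criterion produces the desired $P$-base.

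The step I expect to be essential, rather than merely routine, is precisely this use of $P=_T\omega\times P$ to cover the countable limit ordinals. One cannot derive $\omega\leq_T P$ from $\omega_1\leq_T P$ alone, since $\omega\not\leq_T\omega_1$: any map $\phi:\omega_1\to\omega$ partitions $\omega_1$ into countably many fibers, at least one of which is uncountable (hence cofinal in $\omega_1$ by regularity), while its $\phi$-image is a single point and so not cofinal in $\omega$. Thus the two hypotheses do genuinely different work—$\omega_1\leq_T P$ supplies the character at the top point, while $P=_T\omega\times P$ supplies the $\omega$ needed at each countable limit point—and everything else (the identification of the three Tukey types and the transitivity manipulations) is bookkeeping.
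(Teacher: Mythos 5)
Your proof is correct and follows essentially the same route as the paper: both reduce to the pointwise criterion $\mathcal{T}_x(X)\leq_T P$, handle the points $\alpha<\omega_1$ via their countable character together with $P=_T\omega\times P$, and handle the top point via the hypothesis $\omega_1\leq_T P$ (the paper just makes the quotient map $\psi(p)=(\phi(p),\omega_1]$ explicit). Your additional details—the projection argument showing $\omega\times P\geq_T\omega$ and the remark that $\omega\not\leq_T\omega_1$, so both hypotheses are genuinely needed—are correct elaborations of steps the paper leaves implicit.
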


 \begin{proof} For each $\alpha<\omega_1$, the space $\omega_1+1$ has a countable local base at $\alpha$. Hence $\mathcal{T}_\alpha(\omega_1+1)\leq_T P$ due to the fact that $P=_T\omega\times P$.

 Let $\phi$ be a map from $P$ to $\omega_1$ which carries confinal subsets of $P$ to confinal subsets of $\omega_1$. Then we define a map $\psi$ from $P$ to $\mathcal{T}_{\omega_1}(\omega_1+1)$ by $\psi(p)=(\phi(p), \omega_1]$ for each $p\in P$. Clearly $\psi$ carries confinal subsets of $P$ to confinal subset of  $\mathcal{T}_{\omega_1}(\omega_1+1)$. Hence the space $\omega_1+1$ has a neighborhood $P$-base at $\omega_1$. This finishes the proof.  \end{proof}

% \begin{ex} The space $(\omega_1+1)\times (\omega_1+1)$ doesn't have an $\omega_1$-base. \end{ex}
 As a result of $\mathfrak{b}\leq_T \omega^\omega$, the space $\omega_1+1$ has an $\omega^\omega$-base under the assumption $\omega_1=\mathfrak{b}$. Gartside and Mamatelashvili in \cite{GM17} proved that $\omega^\omega\times \omega_1\leq_T\mathcal{K}(\mathbb{Q})\leq_T\omega^\omega\times [\omega_1]^{<\omega}$, here $\mathbb{Q}$ is the space of rationals. Hence, we have the following result.
  \begin{cor}\label{o1kq} The space $\omega_1+1$ has a $\mathcal{K}(\mathbb{Q})$-base.
 \end{cor}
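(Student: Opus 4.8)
The plan is to apply the preceding Proposition with $P=\mathcal{K}(\mathbb{Q})$. That reduces the whole statement to verifying its two hypotheses for this particular poset, namely that $\omega_1\leq_T\mathcal{K}(\mathbb{Q})$ and that $\mathcal{K}(\mathbb{Q})=_T\omega\times\mathcal{K}(\mathbb{Q})$. The first is immediate from the Gartside--Mamatelashvili lower bound quoted just above: since $\omega^\omega\times\omega_1\leq_T\mathcal{K}(\mathbb{Q})$, and the coordinate projection $\omega^\omega\times\omega_1\to\omega_1$ is order-preserving with cofinal (indeed surjective) image, transitivity of $\leq_T$ gives $\omega_1\leq_T\mathcal{K}(\mathbb{Q})$. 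The same projection argument applied to the first coordinate yields $\omega^\omega\leq_T\mathcal{K}(\mathbb{Q})$, and composing with the map $\omega^\omega\to\omega$, $f\mapsto f(0)$, shows $\omega\leq_T\mathcal{K}(\mathbb{Q})$; I will use this last fact below.

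The substance of the argument, and what I expect to be the main obstacle, is the absorption identity $\mathcal{K}(\mathbb{Q})=_T\omega\times\mathcal{K}(\mathbb{Q})$. One inequality is trivial: the projection $\omega\times\mathcal{K}(\mathbb{Q})\to\mathcal{K}(\mathbb{Q})$ is order-preserving and cofinal, so $\mathcal{K}(\mathbb{Q})\leq_T\omega\times\mathcal{K}(\mathbb{Q})$. For the reverse direction I would first establish the idempotence $\mathcal{K}(\mathbb{Q})=_T\mathcal{K}(\mathbb{Q})\times\mathcal{K}(\mathbb{Q})$. This rests on two observations. First, for a topological sum one has the poset isomorphism $\mathcal{K}(M\oplus N)\cong\mathcal{K}(M)\times\mathcal{K}(N)$, obtained by sending a compact set $K$ to the pair $(K\cap M,\,K\cap N)$ of its (clopen, hence compact) traces; this map is an order isomorphism with inverse $(A,B)\mapsto A\cup B$. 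Second, by Sierpi\'nski's characterization of $\mathbb{Q}$ as the unique countable metrizable space without isolated points, the space $\mathbb{Q}\oplus\mathbb{Q}$ is homeomorphic to $\mathbb{Q}$. Combining these gives $\mathcal{K}(\mathbb{Q})=_T\mathcal{K}(\mathbb{Q}\oplus\mathbb{Q})\cong\mathcal{K}(\mathbb{Q})\times\mathcal{K}(\mathbb{Q})$.

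With the idempotence in hand, I would finish using monotonicity of the Tukey order under products: from $\omega\leq_T\mathcal{K}(\mathbb{Q})$ we get $\omega\times\mathcal{K}(\mathbb{Q})\leq_T\mathcal{K}(\mathbb{Q})\times\mathcal{K}(\mathbb{Q})=_T\mathcal{K}(\mathbb{Q})$, and together with the trivial inequality above this yields $\mathcal{K}(\mathbb{Q})=_T\omega\times\mathcal{K}(\mathbb{Q})$. The two hypotheses of the Proposition are then met, and the conclusion that $\omega_1+1$ has a $\mathcal{K}(\mathbb{Q})$-base follows at once. I expect the only delicate point to be the careful justification of the self-absorption identity: the sandwich $\omega^\omega\times\omega_1\leq_T\mathcal{K}(\mathbb{Q})\leq_T\omega^\omega\times[\omega_1]^{<\omega}$ does not by itself force $\mathcal{K}(\mathbb{Q})$ to absorb a factor of $\omega$, so one genuinely needs the topological input $\mathbb{Q}\cong\mathbb{Q}\oplus\mathbb{Q}$ and the resulting order isomorphism rather than a purely formal manipulation of the bounds.
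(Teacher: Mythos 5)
Your proposal is correct and takes essentially the same approach as the paper: the paper also obtains the corollary by applying the preceding Proposition to $P=\mathcal{K}(\mathbb{Q})$, with both hypotheses extracted from the quoted Gartside--Mamatelashvili bound $\omega^\omega\times\omega_1\leq_T\mathcal{K}(\mathbb{Q})\leq_T\omega^\omega\times[\omega_1]^{<\omega}$. The only difference is one of detail: the paper treats the absorption hypothesis $\mathcal{K}(\mathbb{Q})=_T\omega\times\mathcal{K}(\mathbb{Q})$ as implicit, whereas you justify it carefully (and correctly) via $\mathbb{Q}\oplus\mathbb{Q}\cong\mathbb{Q}$ and the order isomorphism $\mathcal{K}(M\oplus N)\cong\mathcal{K}(M)\times\mathcal{K}(N)$.
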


A generalization of $G_\delta$-diagonals is $P$-diagonals for some poset $P$. A collection $\mathcal{C}$ of subsets of a space $X$ is $P$-directed if $\mathcal{C}$ can be represented as $\{C_p: p\in P\}$ such that $C_p\subseteq C_{p'}$ whenever $p\leq p'$. We say $X$ has a $P$-diagonal if $X^2\setminus \Delta$ has a $P$-directed compact cover, where $\Delta=\{(x, x): x\in X\}$. The second author showed that any compact space with a $\mathcal{K}(\mathbb{Q})$-diagonal is metrizable in \cite{Feng2019} and S\'{a}nchez proved that the same result holds for any compact space with a  $\mathcal{K}(M)$-diagonal for some separable metric space $M$ in \cite{San2020}. Here, we include two results about spaces with (or without) $P$-diagonal giving that $P$ satisfies some Calibre properties.

\begin{prop}Let $P$ be a poset with Calibre~$(\omega_1, \omega)$. The space $\omega_1+1$ doesn't have a  $P$-diagonal. \end{prop}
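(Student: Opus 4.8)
The plan is to argue by contradiction, producing inside a single member of the directed compact cover a sequence of off-diagonal points that converges to a point of the diagonal. So assume $(\omega_1+1)^2\setminus\Delta=\bigcup_{p\in P}C_p$ with each $C_p$ compact and $C_p\subseteq C_{p'}$ whenever $p\le p'$. The points I would use are $x_\alpha=(\alpha,\alpha+1)$ for $\alpha<\omega_1$; each lies off the diagonal and they are pairwise distinct. Their crucial feature is that \emph{every} infinite subfamily accumulates on $\Delta$: an infinite $A\subseteq\omega_1$ is countable, hence bounded by some $\gamma<\omega_1$, so $A$ lives in the compact ordinal $\gamma+1$ and has an accumulation point $\delta$; since the elements of $A$ are distinct, $\delta$ must be a limit ordinal approached cofinally from below by some $\alpha_n\in A$, and then $(\alpha_n,\alpha_n+1)\to(\delta,\delta)\in\Delta$.

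Next, for each $\alpha$ choose $p_\alpha\in P$ with $x_\alpha\in C_{p_\alpha}$. The goal is to locate an \emph{infinite} set $A\subseteq\omega_1$ for which $\{p_\alpha:\alpha\in A\}$ is bounded above by some $q\in P$. If the set $\{p_\alpha:\alpha<\omega_1\}$ is countable, a pigeonhole argument yields a single value attained on an uncountable (so infinite) set $A$, and $q$ may be taken to be that value. If it is uncountable, I would first thin out to an uncountable $A_0$ on which $\alpha\mapsto p_\alpha$ is injective, and then apply calibre $(\omega_1,\omega)=(\omega_1,\omega,\omega)$ to the uncountable set $\{p_\alpha:\alpha\in A_0\}$: this produces a countably infinite \emph{bounded} subfamily, whose index set (via injectivity) is the desired countably infinite $A$, with an upper bound $q$. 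In either case $\{x_\alpha:\alpha\in A\}\subseteq C_q$.

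To finish, since $A$ is infinite it has, by the first paragraph, an accumulation point $(\delta,\delta)\in\Delta$. As $C_q$ is a compact subset of the Hausdorff space $(\omega_1+1)^2$ it is closed, so $(\delta,\delta)\in C_q$; but $C_q\subseteq(\omega_1+1)^2\setminus\Delta$, a contradiction. I expect the main subtlety to be exactly the bridge between calibre and the diagonal: calibre only supplies a \emph{countable} bounded subfamily, so the off-diagonal points must be chosen so that a countable subfamily already accumulates on $\Delta$. This is precisely why I take $x_\alpha=(\alpha,\alpha+1)$ rather than, say, $(\alpha,\omega_1)$: along the latter line only an \emph{uncountable} (cofinal) set of points accumulates at the diagonal point $(\omega_1,\omega_1)$, which a countable bounded subfamily can never provide. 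The only other point needing care is the possible repetition in $\alpha\mapsto p_\alpha$, which the pigeonhole/injective-section split above is designed to handle.
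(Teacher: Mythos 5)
Your proof is correct and follows essentially the same route as the paper: pick an $\omega_1$-family of off-diagonal points whose countable increasing subfamilies converge to diagonal points (the paper uses interleaved pairs $(\alpha_\gamma,\beta_\gamma)$ with $\alpha_0<\beta_0<\alpha_1<\beta_1<\cdots$, you use $(\alpha,\alpha+1)$), apply calibre~$(\omega_1,\omega)$ to the witnessing elements $p_\alpha$ to obtain a countably infinite bounded subfamily, and conclude that the corresponding compact set $C_q$, being closed, contains a diagonal point $(\delta,\delta)$ --- a contradiction. Your pigeonhole/injectivity case split for possible repetitions in $\alpha\mapsto p_\alpha$ is a careful treatment of a technicality the paper's proof passes over silently, but it does not alter the underlying argument.
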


\begin{proof}Suppose that $\omega_1+1$ has a $P$-diagonal, i.e., a $P$-ordered compact covering $\{K_p:p\in P\}$ of $(\omega_1+1)^2\setminus \Delta$ Choose $\alpha_\gamma$  and $\beta_\gamma$ in $\omega_1$ for $\gamma\in \omega_1$ such that

$$\alpha_0 < \beta_0 <\alpha_1 <\beta_1< \ldots < \alpha_\gamma <\beta_\gamma <\dots.$$

Let $p_\gamma$ in $P$ be such that $(\alpha_\gamma,\beta_\gamma)\in K_{p_\gamma}$ for each $\gamma\in \omega_1$.   By Calibre $(\omega_1, \omega)$, there are $p$ in $P$  and $\gamma_n\in \omega_1, n\in\omega$  such that $\gamma_0<\gamma_1<\ldots$   and $K_{p_{\gamma_n}} \subset K_p$ for each $n\in\omega$.  Then $(\delta,\delta)\in K_p$, where $\delta = \sup \{\alpha_{\gamma_n}: n\in\omega \}$. This contradiction finishes the proof. \end{proof}

\begin{prop}Let $P$ be a poset with Calibre~$\omega_1$. Any compact space with a $P$-diagonal has countable tightness. \end{prop}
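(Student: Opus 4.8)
The plan is to prove the contrapositive. Suppose $X$ is compact and has a $P$-diagonal, witnessed by a $P$-ordered compact cover $\{K_p:p\in P\}$ of $X^2\setminus\Delta$, but that $X$ has uncountable tightness. I would derive a contradiction from Calibre~$\omega_1$. The first step is to invoke the theorem of Juh\'asz and Szentmikl\'ossy that every compact Hausdorff space of uncountable tightness contains a \emph{convergent} free sequence of length $\omega_1$. Let $(x_\alpha)_{\alpha<\omega_1}$ be such a sequence, converging to a point $x^*$. Since the sequence is free and converges to $x^*$, no term equals $x^*$, so each pair $(x_\alpha,x^*)$ lies in $X^2\setminus\Delta$.

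The second step exploits the sections of the cover at $x^*$. For $p\in P$ set $K_p^{x^*}=\{y\in X:(y,x^*)\in K_p\}$; this is a closed and compact subset of $X$ that misses $x^*$, because $(x^*,x^*)\in\Delta$. Hence $X\setminus K_p^{x^*}$ is an open neighbourhood of $x^*$, and since $(x_\alpha)\to x^*$ the set $B_p:=\{\alpha<\omega_1:(x_\alpha,x^*)\in K_p\}=\{\alpha:x_\alpha\in K_p^{x^*}\}$ is countable for every $p$. Now for each $\alpha$ choose $p_\alpha\in P$ with $(x_\alpha,x^*)\in K_{p_\alpha}$, so that $\alpha\in B_{p_\alpha}$. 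Applying Calibre~$\omega_1$ to $\{p_\alpha:\alpha<\omega_1\}$ (or, if this set is countable, the pigeonhole principle) yields an uncountable $A\subseteq\omega_1$ and a single $p\in P$ with $p_\alpha\le p$ for all $\alpha\in A$. Then $(x_\alpha,x^*)\in K_{p_\alpha}\subseteq K_p$ for every $\alpha\in A$, i.e.\ $A\subseteq B_p$, contradicting the countability of $B_p$. Equivalently, $K_p\supseteq\overline{\{(x_\alpha,x^*):\alpha\in A\}}\ni(x^*,x^*)$, contradicting $K_p\cap\Delta=\emptyset$.

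The one substantial input is the existence of a \emph{convergent} free sequence, and I expect this to be the crux. A mere free sequence -- equivalently a point $x^*$ in the intersection of the tail-closures -- is not enough: the uncountable set $A$ returned by the calibre is not under our control, and for a non-convergent free sequence the membership sets $B_p$ can be uncountable, so nothing forces $x^*\in\overline{\{x_\alpha:\alpha\in A\}}$; indeed $A$ could be pushed entirely into the compact set $K_p^{x^*}$, which avoids $x^*$. Convergence is exactly what rules this out: for a convergent $\omega_1$-sequence every uncountable (hence cofinal) subset still clusters at the limit, equivalently each $B_p$ is countable. Thus the argument rests on converting uncountable tightness into a convergent free sequence via Juh\'asz--Szentmikl\'ossy, after which Calibre~$\omega_1$ and the directedness of the compact cover close the argument in a few lines.
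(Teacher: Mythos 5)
Your proof is correct and follows essentially the same route as the paper's: both convert uncountable tightness into a \emph{convergent} free sequence of length $\omega_1$ via Juh\'asz--Szentmikl\'ossy, then apply Calibre~$\omega_1$ to the conditions $p_\alpha$ covering off-diagonal pairs built from the sequence, and conclude that the closed set $K_{p^*}$ is forced to meet the diagonal (equivalently, to contain uncountably many terms converging to $x^*$), a contradiction. The only cosmetic difference is that you use pairs $(x_\alpha,x^*)$ with the limit point in the second coordinate, whereas the paper pairs distinct terms $(x_{\alpha_\gamma},x_{\beta_\gamma})$ of the sequence; your explicit treatment of the case where $\{p_\alpha:\alpha<\omega_1\}$ is countable as a set is a small point the paper's write-up glosses over.
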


\begin{proof} Let $\{K_p:p\in P\}$ be a $P$-ordered compact covering of $X^2\setminus \Delta$. Suppose that $X$ has uncountable tightness. Then, $X$ has a free sequence of length $\omega_1$, hence a convergence free sequence of length $\omega_1$  by \cite{JS1992}. Let $\{x_\alpha: \alpha<\omega_1\}$ be such a sequence and $x^\ast$ the limit point.

Choose $\alpha_\gamma$  and $\beta_\gamma$ in $\omega_1$ for $\gamma\in \omega_1$ such that
 $\alpha_0 < \beta_0 <\alpha_1 <\beta_1< \ldots < \alpha_\gamma <\beta_\gamma <\dots.$ For each $\gamma\in\omega_1$, fix $p_\gamma\in P$ such that $(x_{\alpha_\gamma}, x_{\beta_\gamma})\in K_{p_\gamma}$. Since $P$ has Calibre~$\omega_1$, there is an uncountable subset $\gamma_\tau$ of $\omega_1$ with $\tau<\omega_1$ such that $p_{\gamma_\tau}$ is bounded above by $p^\ast\in P$. Hence,   $K_{p_{\gamma_\tau}}\subseteq K_{p^\ast}$ for each $\tau<\omega_1$, furthermore, $(x_{\alpha_{\gamma_\tau}}, x_{\beta_{\gamma_\tau}})\in K_{p^\ast}$. Since $\{x_\alpha: \alpha<\omega_1\}$ is a convergent sequence with limit point $x^\ast$, the subsequences $x_{\alpha_{\gamma_\tau}}$  and $x_{\beta_{\gamma_\tau}}$ both converge to $x^\ast$, hence, $(x^\ast, x^\ast)\in K_{p^\ast}$ which contradicts with the fact that $K_{p^\ast}\subset X\setminus \Delta$. This contradiction finishes the proof.  \end{proof}

\section{Compact Spaces}\label{compact}

In this section, we study compact spaces possessing a $P$-base with $P$ satisfying some Calibre property. Mainly, we investigate  the problem whether  each compact Hausdorff space with an $\omega^\omega$-base first countable under the assumption $\omega_1<\mathfrak{b}$. We start with some ZFC result about tightness of the spaces with a $P$-base.

 \begin{thm} Let $\kappa$ be an uncountable regular cardinal and  $P$ be a poset with Calibre~$\kappa$.  If $X$ is a compact Hausdorff space with a $P$-base, then $t(X)<\kappa$.

\end{thm}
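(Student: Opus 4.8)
The plan is to argue by contraposition: assuming $t(X)\geq\kappa$, I will produce a single neighborhood of a point that simultaneously omits $\kappa$-many terms of a sequence converging to that very point, which is absurd. The bridge from tightness to a usable convergent sequence is supplied by the standard free-sequence machinery for compact spaces.

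First I would invoke Arhangel'skii's theorem that in a compact Hausdorff space the tightness equals the supremum of the lengths of free sequences. Since $\kappa$ is regular, $t(X)\geq\kappa$ yields a free sequence of length $\geq\kappa$, and truncating to an initial segment gives a free sequence $\{x_\alpha:\alpha<\kappa\}$ of length exactly $\kappa$. Then, exactly as in the preceding propositions, I would apply the result of Juh\'asz and Szentmikl\'ossy \cite{JS1992} to replace it by a \emph{convergent} free sequence of length $\kappa$; call its limit $x^\ast$. Freeness guarantees that $x^\ast\notin\overline{\{x_\beta:\beta<\alpha\}}$ for every $\alpha$, so in particular $x^\ast\neq x_\alpha$ for all $\alpha<\kappa$.

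Next I would exploit the $P$-base at $x^\ast$. Fix a neighborhood $P$-base $(U_p[x^\ast])_{p\in P}$ at $x^\ast$, so that $U_p[x^\ast]\subseteq U_{p'}[x^\ast]$ whenever $p\geq p'$. For each $\alpha<\kappa$, since $x_\alpha\neq x^\ast$ and $X$ is Hausdorff, some basic neighborhood $U_{p_\alpha}[x^\ast]$ omits $x_\alpha$. Now the calibre hypothesis does the work: applying calibre~$\kappa$ to the family $\{p_\alpha:\alpha<\kappa\}$ (if fewer than $\kappa$ distinct values occur, regularity of $\kappa$ already yields a single repeated value bounding a $\kappa$-sized index set) produces a set $A\subseteq\kappa$ with $|A|=\kappa$ together with an upper bound $p^\ast\in P$ of $\{p_\alpha:\alpha\in A\}$. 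Monotonicity of the base then gives $U_{p^\ast}[x^\ast]\subseteq U_{p_\alpha}[x^\ast]$, so $x_\alpha\notin U_{p^\ast}[x^\ast]$ for every $\alpha\in A$.

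Finally I would derive the contradiction. As $U_{p^\ast}[x^\ast]$ is a neighborhood of the limit $x^\ast$, convergence of the sequence gives some $\beta<\kappa$ with $x_\alpha\in U_{p^\ast}[x^\ast]$ for all $\alpha\geq\beta$; but $|A|=\kappa$ and $\kappa$ is regular, so $A$ meets $[\beta,\kappa)$, producing an index $\alpha$ with $x_\alpha$ both inside and outside $U_{p^\ast}[x^\ast]$. I expect the only genuinely delicate point to be the reduction step, namely confirming that the Arhangel'skii and Juh\'asz--Szentmikl\'ossy results apply at an arbitrary regular uncountable $\kappa$ (not merely $\omega_1$) to deliver a convergent free sequence of length $\kappa$ whose limit lies off the sequence. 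Everything afterward---separating points from $x^\ast$, invoking calibre to bound a $\kappa$-sized subfamily, and clashing with convergence---is routine and directly parallels the calibre argument used in the propositions just above.
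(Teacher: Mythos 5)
Your proof is correct and follows essentially the same route as the paper's: get a convergent free sequence of length $\kappa$ via the Juh\'asz--Szentmikl\'ossy theorem, choose for each term a basic neighborhood of the limit omitting it, split into the cases of fewer than $\kappa$ versus $\kappa$-many distinct $p_\alpha$'s (pigeonhole by regularity vs.\ calibre~$\kappa$), and contradict convergence. The only differences are cosmetic: you spell out details the paper leaves implicit (Arhangel'skii's free-sequence characterization of tightness, and that freeness forces $x^\ast\neq x_\alpha$ so the separating neighborhoods exist).
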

\begin{proof} Assume, for contradiction, that $t(X)\geq \kappa$. Then, $X$ has a free sequence of length $\kappa$. Hence, by \cite{JS1992}, $X$ has a convergent sequence of length $\kappa$. Let $\{x_\alpha: \alpha<\kappa\}$ be such a sequence and $x^\ast$ be the limit point. Let $S= \{x_\alpha: \alpha<\kappa\}\cup \{x^\ast\}$. Notice that for any unbounded subset $\{\alpha_\gamma: \gamma<\kappa\}$, $x^\ast$ is the limit point of  $\{x_{\alpha_\gamma}: \gamma<\kappa\}$. Fix a neighborhood base $\{B_p: p\in P\}$ at $x_\ast$. It is straightforward to see that $S\setminus B_p$ has size $<\kappa$ for each $p\in P$.

For each $\alpha<\kappa$, choose $p_\alpha \in P$ such that $x_\alpha\notin B_{p_\alpha}$. Let $P'=\{p_\alpha:\alpha\in \kappa\}$. If the cardinality of $P'$ is $<\kappa$, there exists a $p_\alpha\in P'$ such that $S\setminus B_{p_\alpha}$ has size $\kappa$ which is a contradiction. Hence, $P'$ have cardinality $\kappa$. Since $P$ has Calibre~$\kappa$, there is a $\kappa$-sized subset $P''$ of $P'$ which is bounded above. List $P''$ as $\{p_{\alpha_\gamma}: \gamma<\kappa\}$ and pick $p^\ast$ to be the upper bound of $P''$. Then, $S\setminus B_{p^\ast}=\{x_{\alpha_\gamma}: \gamma<\kappa\}$ which is a contradiction. This finishes the proof.    \end{proof}

\begin{cor} Let $P$ be a poset with Calibre~$\omega_1$.  Each compact Hausdorff space with a $P$-base is countable tight.

\end{cor}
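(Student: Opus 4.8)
The plan is to obtain this corollary as the immediate specialization of the preceding theorem to the case $\kappa = \omega_1$. First I would verify that the hypotheses of the theorem are met at $\kappa = \omega_1$: the cardinal $\omega_1$ is uncountable and regular, and by the convention fixed in the Preliminaries, Calibre~$\omega_1$ is exactly Calibre~$(\omega_1, \omega_1, \omega_1)$, which is the hypothesis ``$P$ has Calibre~$\kappa$'' read at $\kappa = \omega_1$. Since $X$ is a compact Hausdorff space with a $P$-base, the theorem then yields $t(X) < \omega_1$.

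The only remaining step is to translate the conclusion $t(X) < \omega_1$ into the terminology of the corollary. Because tightness is a cardinal invariant, the strict inequality $t(X) < \omega_1$ is equivalent to $t(X) \leq \omega$, i.e.\ $t(X) = \omega$ (or finite), which is precisely the statement that $X$ has countable tightness. Hence the corollary follows directly.

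There is essentially no obstacle to overcome here. All the substantive work---passing from uncountable tightness to a convergent free sequence of length $\kappa$ via \cite{JS1992}, and then using Calibre~$\kappa$ to bound the collection of parameters $p_\alpha$ witnessing that the sequence leaves a fixed neighborhood base $\{B_p : p \in P\}$ at the limit point---has already been carried out in the proof of the theorem. The corollary is merely its $\kappa = \omega_1$ instance, and the only point worth stating explicitly is the identification of ``$t(X) < \omega_1$'' with ``$X$ has countable tightness.''
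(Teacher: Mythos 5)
Your proposal is correct and matches the paper exactly: the paper states this corollary without proof as the immediate $\kappa=\omega_1$ instance of the preceding theorem, which is precisely your argument. Your explicit verification that $\omega_1$ is uncountable and regular, and that $t(X)<\omega_1$ means countable tightness, is the whole content.
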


A poset $P$ has Calibre~$(\omega_1, \omega)$ if it has Calibre~$\omega_1$. It is showed in \cite{AMETD} that for a separable metric space $M$ the poset $\mathcal{K}(M)$ has Calibre~$\omega_1$ if it has Calibre~$(\omega_1, \omega_1, \omega)$.  Hence it is natural to ask whether a compact space has countable tightness if it has a $P$-base with $P$ having Calibre~$(\omega_1, \omega)$. The following example shows that the answer is negative. So the result above is `optimal' in terms of the Calibre complexity of posets having the form $\mathcal{K}(M)$ with $M$ being a separable metric space.
\begin{ex} There is a poset $P$ with Calibre~$(\omega_1, \omega)$ and a compact space $X$ with a $P$-base, but $t(X)>\omega$.

\end{ex}

\begin{proof} Let $P$ be $\mathcal{K}(\mathbb{Q})$ which clearly has Calibre~$(\omega_1, \omega)$. From Proposition~\ref{o1kq}, the space $\omega_1+1$ has a $P$-base, but its tightness is uncountable. \end{proof}

%\begin{thm} Assume CH. Let $P$ be a poset with Calibre~$\omega_1$.  Each compact Hausdorff space with a $P$-base is first countable.\end{thm}

% \begin{cor} Assume CH $\omega_1<\mathfrak{b}$ . Each compact Hausdorff space has an $\omega^\omega$-base if and only if it is first countable. \end{cor}
Again, since $\omega^\omega$ has Calibre~$\omega_1$ under the assumption  $\omega_1<\mathfrak{b}$, we obtain the following result about spaces with an $\omega^\omega$-base.
\begin{cor}\label{ctblt} Assume that $\omega_1<\mathfrak{b}$.  Each compact Hausdorff space with an $\omega^\omega$-base is countable tight.
\end{cor}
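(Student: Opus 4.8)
The plan is to reduce this statement directly to the preceding corollary, so that the only genuine content is verifying the Calibre hypothesis for the specific poset $\omega^\omega$. Recall from the Preliminaries that under the assumption $\omega_1<\mathfrak{b}$ the poset $(\omega^\omega,\le)$, ordered by everywhere domination, has Calibre~$\omega_1$. Granting this, the corollary asserting that any compact Hausdorff space with a base indexed by a Calibre~$\omega_1$ poset is countably tight applies verbatim with $P=\omega^\omega$, and we are done.

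The one step worth spelling out is why $\omega^\omega$ has Calibre~$\omega_1$ when $\omega_1<\mathfrak{b}$, since this is where the cardinal hypothesis enters. Given an uncountable family $\{f_\alpha:\alpha<\omega_1\}\subseteq\omega^\omega$, the assumption $\omega_1<\mathfrak{b}=\add(\omega^\omega,\le^\ast)$ yields a single $g\in\omega^\omega$ with $f_\alpha\le^\ast g$ for every $\alpha$. For each $\alpha$ choose $n_\alpha$ past which $f_\alpha$ stays below $g$; by pigeonhole an uncountable set $A\subseteq\omega_1$ shares one value $n^\ast$. The map $\alpha\mapsto f_\alpha\restriction n^\ast$ then sends $A$ into the countable set $\omega^{n^\ast}$, so a further uncountable $A'\subseteq A$ has all these initial segments equal, say to a tuple $(c_0,\dots,c_{n^\ast-1})$. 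Defining $h$ to agree with this tuple below $n^\ast$ and with $g$ above, we obtain $f_\alpha\le h$ everywhere for all $\alpha\in A'$. Thus $S_0=\{f_\alpha:\alpha\in A'\}$ is uncountable and bounded, whence every uncountable subset of $S_0$ is bounded, which is exactly Calibre~$\omega_1$.

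Since this Calibre fact is already recorded in the Preliminaries, in practice I would simply cite it and invoke the corollary above. There is no real obstacle here: the force of the result lives entirely in the theorem $t(X)<\kappa$ for Calibre~$\kappa$ posets, while the present step contributes only the elementary combinatorics (passing from a $\le^\ast$-bound, available because $\omega_1<\mathfrak{b}$, to an everywhere bound via the two pigeonhole refinements described above) that licenses taking $\kappa=\omega_1$ and $P=\omega^\omega$.
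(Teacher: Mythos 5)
Your proof is correct and takes essentially the same route as the paper: the paper's entire argument is to cite the Preliminaries fact that $\omega_1<\mathfrak{b}$ implies $\omega^\omega$ has Calibre~$\omega_1$ and then apply the preceding corollary (compact Hausdorff with a $P$-base for $P$ of Calibre~$\omega_1$ is countably tight) with $P=\omega^\omega$. Your added pigeonhole verification that a $\le^\ast$-bound can be upgraded to an everywhere bound on an uncountable subfamily is a correct proof of the Calibre fact that the paper states without proof.
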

It is folklore that any GO-space with countable tightness is first countable. Hence, applying Corollary~\ref{ctblt} to compact GO-spaces, we obtain the following results.
\begin{cor} Let $P$ be a poset with Calibre~$\omega_1$.  Each compact GO-space has a $P$-base if and only if it is first countable.

\end{cor}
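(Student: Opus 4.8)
The plan is to prove the two implications separately; the forward direction (``$P$-base $\Rightarrow$ first countable'') carries essentially all the content, while the reverse direction is a short definition chase. Throughout, note that a compact GO-space is compact Hausdorff, so the earlier results apply. For the forward direction I would argue as follows. Suppose the compact GO-space $X$ has a $P$-base with $P$ of Calibre~$\omega_1$. By the Corollary just established (each compact Hausdorff space with a $P$-base for $P$ of Calibre~$\omega_1$ is countably tight), $X$ is a countably tight compact GO-space. I would then invoke the folklore fact recalled above, that every GO-space of countable tightness is first countable, to conclude that $X$ is first countable. Rather than reprove the folklore fact, I would cite it, but the idea is standard: in a GO-space the character of a point equals the cofinality (resp.\ coinitiality) of its two one-sided cuts, so a point of uncountable character would support a strictly monotone $\omega_1$-sequence converging to it from one side, and such a sequence is free, forcing $t(X)\geq\omega_1$.

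For the reverse direction, suppose the compact GO-space $X$ is first countable. Then every $x\in X$ has a countable neighborhood base, so the neighborhood poset $\mathcal{T}_x(X)$, ordered by reverse inclusion, has a countable cofinal subset and hence $\mathcal{T}_x(X)\leq_T\omega$. Since $\omega\leq_T P$, transitivity of the Tukey order gives $\mathcal{T}_x(X)\leq_T P$ for every $x\in X$. By the characterization recorded in the introduction, namely that $X$ has a $P$-base if and only if $\mathcal{T}_x(X)\leq_T P$ for each $x$, it follows that $X$ has a $P$-base.

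The only point requiring care is the hypothesis $\omega\leq_T P$ used in the reverse direction, and this is where I would flag a subtlety rather than an obstacle. If $P$ had a maximum element (so $P=_T 1$), then only finite compact GO-spaces would admit a $P$-base, and the equivalence would fail for infinite first-countable spaces such as $\omega+1$; thus the statement is meant for nontrivial $P$. For every poset of genuine interest here, in particular $\omega^\omega$ and $\mathcal{K}(M)$ for a separable metric space $M$, one has a countable unbounded subset and hence $\omega\leq_T P$, so this causes no difficulty. With that understood, no real obstacle remains: the forward direction is a direct chaining of the preceding Corollary with the folklore GO-space fact, and the reverse direction is immediate from first countability and transitivity of $\leq_T$.
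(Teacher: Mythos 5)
Your forward implication is exactly the paper's proof: the authors obtain this corollary by chaining the preceding corollary (every compact Hausdorff space with a $P$-base, where $P$ has Calibre~$\omega_1$, is countably tight) with the folklore fact that a countably tight GO-space is first countable, and they say nothing more. Where you genuinely add something is the reverse implication, which the paper leaves entirely implicit, and your ``subtlety'' is in fact a real defect of the statement as written rather than a pedantic aside. Calibre~$\omega_1$ does not imply $\omega\leq_T P$, and not only because of posets with a maximum: the poset $P=\omega_2$, which the paper itself uses in its Example of a (non-compact) GO-space with a $P$-base and uncountable tightness, is directed, has no maximum, and has Calibre~$\omega_1$ (every $\omega_1$-sized subset is bounded above), yet it is \emph{countably} directed. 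Consequently the infinite first-countable compact GO-space $\omega+1$ has no $\omega_2$-base: given a decreasing base $(U_p)_{p\in\omega_2}$ at the non-isolated point $\omega$ and a countable base $(V_n)_{n\in\omega}$ there, pick $p_n$ with $U_{p_n}\subseteq V_n$ and an upper bound $p^\ast$ of $\{p_n: n\in\omega\}$; then $U_{p^\ast}\subseteq\bigcap_{n}V_n=\{\omega\}$, which would make $\omega$ isolated, a contradiction. So the ``if'' half of the corollary is simply false for arbitrary Calibre~$\omega_1$ posets; it becomes true exactly under the hypothesis you identified, $\omega\leq_T P$ (equivalently, $P$ has a countable unbounded subset), which does hold for $\omega^\omega$ and for $\mathcal{K}(M)$ with $M$ non-compact, the cases the paper actually cares about. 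In short: your proof of the substantive direction coincides with the paper's, your Tukey argument for the converse is correct under the extra hypothesis, and that hypothesis is genuinely needed, so the paper's statement should be read (or amended) accordingly.
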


The following example shows that the result above doesn't hold for general GO-spaces.

\begin{ex} There is a poset $P$ with Calibre~$\omega_1$ such that there exists a GO-space with a $P$-base and uncountable tightness.   \end{ex}

\begin{proof} Consider the set $\omega_2+1$ in the ordinal order. Let $\mathcal{T}$ be the topology on $\omega_2+1$ such that every point except $\omega_2$ is isolated and a base at $\omega_2$ is $\{(\alpha, \omega_2]: \alpha<\omega_2\}$. So the space $(\omega_2+1, \mathcal{T})$ is a non-first-countable GO-space and clearly has a neighborhood $\omega_2$-base at $\omega_2$. It is straightforward to verify that the poset $\omega_2$ has Calibre~$\omega_1$ since every $\omega_1$-sized subset is bounded above. \end{proof}

The result below was proved in \cite{Banakh2019} through a different approach. We obtain it here as a result of $\omega^\omega$ having Calibre~$\omega_1$ under the assumption $\omega_1<\mathfrak{b}$.

\begin{cor} Assume that $\omega_1<\mathfrak{b}$. Each compact GO-space has an $\omega^\omega$-base if and only if it is first countable. \end{cor}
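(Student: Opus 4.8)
The plan is to obtain this as a direct specialization of the immediately preceding corollary, which asserts that for any poset $P$ with Calibre~$\omega_1$, a compact GO-space has a $P$-base if and only if it is first countable. The single ingredient needed to invoke that corollary with $P=\omega^\omega$ is the fact, recorded in the Preliminaries, that under $\omega_1<\mathfrak{b}$ the poset $\omega^\omega$ has Calibre~$\omega_1$. So the whole argument reduces to checking this hypothesis and then citing the general statement.

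To keep the logic self-contained I would spell out the two implications separately. For the substantive direction, suppose $X$ is a compact GO-space with an $\omega^\omega$-base. By Corollary~\ref{ctblt}, under $\omega_1<\mathfrak{b}$ such a space has countable tightness; and since every GO-space with countable tightness is first countable (the folklore fact cited above), $X$ is first countable. For the trivial direction, if $X$ is first countable then $\mathcal{T}_x(X)\leq_T\omega$ at each $x$, and since $\omega\leq_T\omega^\omega$ we get $\mathcal{T}_x(X)\leq_T\omega^\omega$; by the criterion relating $P$-bases to Tukey order, $X$ has an $\omega^\omega$-base. I would note that this second implication uses neither compactness nor $\omega_1<\mathfrak{b}$.

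The real content sits entirely inside Corollary~\ref{ctblt}, whose proof in turn rests on converting a free sequence of length $\kappa$ into a convergent one via \cite{JS1992} and then exploiting Calibre~$\omega_1$ of $\omega^\omega$. Consequently there is no genuinely new obstacle at this stage: once the Calibre~$\omega_1$ property of $\omega^\omega$ under $\omega_1<\mathfrak{b}$ is in hand, the corollary follows immediately. If I were instead proving everything from scratch, the only nontrivial point would be the countable-tightness step, that is, the free-sequence argument underlying Corollary~\ref{ctblt}; the GO-space and Tukey-reduction reasoning are routine by comparison.
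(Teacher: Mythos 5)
Your proposal is correct and matches the paper's own route: the paper obtains this corollary exactly by specializing the preceding Calibre~$\omega_1$ GO-space corollary to $P=\omega^\omega$, using the fact (recorded in the Preliminaries) that $\omega_1<\mathfrak{b}$ implies $\omega^\omega$ has Calibre~$\omega_1$. Your explicit write-out of the two implications (countable tightness via Corollary~\ref{ctblt} plus the folklore GO-space fact for one direction, and $\mathcal{T}_x(X)\leq_T\omega\leq_T\omega^\omega$ for the other) is just a more detailed rendering of the same argument.
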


It is natural to ask as in \cite{Banakh2019} (Problem 8.7.7) whether the same result holds for any compact space. Next, we'll give a positive answer to this problem by showing that any compact space with a $P$-base is first countable if $P=\mathcal{K}(M)$ for some separable metric space $M$ has Calibre~$\omega_1$.

First, we show that any compact space with countable tightness is first countable if it has a $P$-base and $P=\mathcal{K}(M)$ for some separable metric space.  We use the ideas and techniques from \cite{COT2011}.

\begin{thm}\label{ctfstc} Let $P=\mathcal{K}(M)$ for some separable metric space $M$. If $X$ is a compact space with countable tightness
and has  a $P$-base,
then $X$ is first-countable. \end{thm}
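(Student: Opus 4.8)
The plan is to fix an arbitrary point $x\in X$ and extract, from the given $\mathcal{K}(M)$-base $\{U_K[x]:K\in\mathcal{K}(M)\}$ at $x$, a countable subfamily that is again a neighborhood base at $x$; since $x$ is arbitrary this yields first-countability. Throughout I would fix a compatible metric on $M$, a countable dense set $D\subseteq M$, and use that $\mathcal{K}(M)$, endowed with the Vietoris (Hausdorff-metric) topology, is separable metrizable and hence second countable, with a countable dense family $\mathcal{D}\subseteq\mathcal{K}(M)$ that I may assume is closed under finite unions. The separability of $M$ is the feature I expect to be doing the real work: it is what distinguishes $\mathcal{K}(M)$ from ``wide'' posets such as $[\omega_1]^{<\omega}$, and any viable argument must exploit it.

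The first step is to reformulate the base as an increasing compact resolution, placing us in the domination framework of \cite{COT2011}. Shrinking each $U_K[x]$ to an open set, the complements $F_K:=X\setminus U_K[x]$ are compact (closed in the compact space $X$), they increase with $K$ since $K\subseteq K'$ forces $U_{K'}[x]\subseteq U_K[x]$, and $\bigcup_K F_K=X\setminus\{x\}$ because $\{U_K[x]\}$ is a neighborhood base at $x$. Thus $X\setminus\{x\}$ carries a $\mathcal{K}(M)$-ordered compact cover, and first-countability at $x$ is precisely the assertion that this resolution has a countable cofinal subfamily. I would then aim to produce such a cofinal subfamily by ``generating'' it from $\mathcal{D}$ under the two operations that reconstruct an arbitrary compactum of $M$ from countable data, namely increasing unions together with Vietoris limits, and by using the compactness of $X$ to control the corresponding neighborhoods.

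The crux is a limit lemma reconciling the monotone assignment $K\mapsto U_K[x]$ with Vietoris approximation. The difficulty is that this assignment is only monotone, not Vietoris-continuous, so for an increasing sequence $K_n$ with $K^{*}=\overline{\bigcup_n K_n}\in\mathcal{K}(M)$ one only gets $U_{K^{*}}[x]\subseteq U_{K_n}[x]$, the \emph{wrong} direction for comparing against a countable family. I would attack this by combining compactness of $X$ with the countable tightness of $X$: given any open $V\ni x$ for which no member of the candidate countable family lies inside $V$, one obtains for each generating compactum a witness point in the compact set $C:=X\setminus V$; the countable set of witnesses has compact closure contained in $C$, so $x$ is not in its closure, and regularity yields a single target compactum $K^{*}\in\mathcal{K}(M)$ with $U_{K^{*}}[x]$ missing all witnesses. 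Countable tightness is then used to capture a convergent subsequence of witnesses whose limit, forced by the separable-metric approximation of $K^{*}$ from the generating family, must coincide with $x$, contradicting $x\notin C$.

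The step I expect to be the main obstacle is exactly establishing this limit lemma, i.e.\ showing that the monotone $\mathcal{K}(M)$-indexed family of neighborhoods behaves continuously along increasing Vietoris-convergent sequences after suitable adjustment, and that countably many generators therefore suffice \emph{locally} and not merely for covering $X\setminus\{x\}$. Here the two hypotheses must be used in tandem: separability of $M$ supplies the countable skeleton and guarantees every target $K^{*}$ is approximable, while countable tightness guarantees that the relevant cluster points are witnessed by countable sets, so that a point of uncountable character would produce a free sequence of length $\omega_1$, contradicting Arhangel'ski\u\i's theorem. Once the limit lemma is in place, the countable family generated from $\mathcal{D}$ is a neighborhood base at $x$, and first-countability follows; I would also note that Proposition~\ref{sub} is available should it be convenient to pass to separable subspaces during the extraction.
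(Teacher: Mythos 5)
Your first step---replacing the base at $x$ by the $\mathcal{K}(M)$-directed compact cover $\{F_K : K\in\mathcal{K}(M)\}$ of $X\setminus\{x\}$ and invoking second countability of the hyperspace---is exactly how the paper begins, and you have correctly located the crux: reconciling the monotone assignment $K\mapsto U_K[x]$ with metric approximation in $\mathcal{K}(M)$. But the resolution you sketch does not work, for two separate reasons. First, your ``candidate countable family'' is never actually produced: closing a countable dense $\mathcal{D}\subseteq\mathcal{K}(M)$ under Vietoris limits yields \emph{all} of $\mathcal{K}(M)$ (every compactum is a limit of a sequence from a dense family), so the proposed generation process is not countable; and no family of the form $\{U_D[x] : D\in\mathcal{D}\}$ with $\mathcal{D}$ merely countable and dense (even closed under finite unions) can be guaranteed to be a base. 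Concretely, take $M=\omega^\omega$, $X=[0,1]$, $x=0$, let $K_n=\{f\in\omega^\omega : f(i)\le n \text{ for all } i\}$, and set $U_K[0]=[0,2^{-\phi(K)})$ where $\phi(K)=\sup\{n\ge 1 : K_n\subseteq K\}$ and $\phi(K)=0$ if $K_1\not\subseteq K$ (note $\phi(K)<\infty$ since $K$ is compact). This is a monotone neighborhood base at $0$, yet $U_D[0]=[0,1)$ for \emph{every} finite $D$, while the finite subsets of a countable dense subset of $\omega^\omega$ form a countable dense subfamily of $\mathcal{K}(\omega^\omega)$ closed under finite unions. Hausdorff-metric proximity of $D$ to $K$ gives no containment relation between $U_D[x]$ and $U_K[x]$, so the countable base cannot be extracted by density alone. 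The paper instead builds neighborhoods of a different shape: fixing a countable base $\{B_n\}$ of the hyperspace, it forms $L(B_n)=\bigcup\{K_q : q\in B_n\}$ and proves that the sets $X\setminus\overline{L(B_n)}$ with $x\notin\overline{L(B_n)}$ form a local base at $x$. The engine making this work is a fact missing from your proposal: in $\mathcal{K}(M)$ every convergent sequence is \emph{bounded above}, i.e.\ if $q_i\to p$ in the Hausdorff metric then $p\cup\bigl(\bigcup_i q_i\bigr)$ is still compact. This is precisely what repairs the wrong-way monotonicity you worried about: the relevant countable data are convergent sequences, not increasing unions.

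Second, the contradiction you aim for does not materialize, and countable tightness is misused. Your witnesses are taken in $U_D[x]\setminus V$, and you conclude that they all lie in $F_{K^*}=X\setminus U_{K^*}[x]$ for a single $K^*$---but that contradicts nothing, since $U_D[x]$ may perfectly well meet $F_{K^*}$. In the paper the witnesses are taken \emph{inside the compacta} $K_q$ for $q$ ranging over a small metric ball around a fixed $p$; the bounded-convergent-sequence fact then traps all of them inside one compact set $K_{p^*}$ disjoint from the neighborhood $U_{p^*}[x]$, which is a genuine contradiction. Moreover, countable tightness does not ``capture a convergent subsequence''---it produces no convergent sequences at all; its correct role (as in the paper's first claim) is to reduce the assertion ``$x$ is not in the closure of $C(p)$'' to countable subsets of $C(p)$, after which the boundedness fact applies. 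Finally, your fallback claim that a point of uncountable character would yield a free sequence of length $\omega_1$, contradicting Arhangel'ski\u{\i}, is false: the one-point compactification of a discrete set of size $\omega_1$ is compact, countably tight, and not first countable. Indeed, if countable tightness alone implied first countability of compact spaces, Theorem~\ref{ctfstc} would not need the $\mathcal{K}(M)$-base at all.
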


\begin{proof} Fix $x\in X$ and an open $P$-base $\{U_p[x]: p\in P\}$ at $x$. For each $p\in P$, let $K_p=X\setminus U_p[x]$. Then, $\{K_p: p\in P\}$ is a $P$-directed compact cover of $X\setminus \{x\}$.

\medskip

For any separable metric space $M$, the space $P=\mathcal{K}(M)$ with the Hausdorff metric $d^H$ is also separable, hence second countable. Also if $\{p_n: n\in\omega\}$ is a sequence  converging to $p$ in $P$, then $p^\ast=p\cup (\bigcup\{p_n: n\in\omega\})$ is compact, hence it is an element in $P$ with $p_n\subset p^\ast$ and $p\subset p^\ast$.

Fix a countable base $\{B_n: n\in\omega\}$ of $P$. For each $n\in\omega$, define $L(B_n)=\bigcup \{K_p: p\in B_n\}$. And for each $p\in P$, define $C(p)=\bigcap\{L(B_n): p\in B_n\}$. For each $p\in P$, we  pick  a decreasing local base $\{B_{n_i}^p: i\in \omega\}\subseteq \{B_n: n\in\omega\}$ at $p$ such that for each $i\in \omega$ there is a positive number $\epsilon_i$ satisfying that $B_{n_i}^p\supset D_{d^H}(p, \epsilon_i)\supset\overline{B_{n_{i+1}}^p}$  where $D_{d^H}(p, \epsilon_i)$ is the open ball centered at $p$ with radius $\epsilon_i$. Define  $C'(p)=\bigcap\{L(B_{n_i}^{p}): i\in \omega\}$. It is straightforward to verify that $C'(p)= C(p)$. %It is clear that $C'(p)\supseteq C(p)$. If $x\notin C(p)$, then $x\notin K_n$ for some $n$. Pick $i$ such that $B_{n_i}^p\subset B_n$. Clearly, $K_{n_i}^p\subset K_n$, hence $x\notin K_{n_i}^p$, further, $x\notin C'(p)$.

\medskip

First, we claim that $x$ is not in the closure of $C(p)$ for all $p\in P$. Fix $p\in P$. By the countable tightness of $X$, it suffices to show that if $x$ is not in the
 closure of any countable subset of $C(p)$. Let $\{ y_i : i \in\omega\}$ be a  countable subset of $C(p)$. For each $i\in\omega$, choose $q_i\in B_{n_i}^p$ with $y_i\in K_{q_i}$. Clearly $\{q_i: i\in \omega\}$ is a sequence converging to $p$, hence $p^\ast =p\cup (\bigcup \{q_i: i\in \omega\})$ is an element in $P$ with
 $\{y_i:i\in\omega\}\subset  K_{p^\ast}$ which implies that $x$ is not in the closure of $\{ y_i : i\in\omega\}$.

 \medskip
 Then, we claim that for each $p\in P$, there is an $i\in \omega$ such that $x$ is not in the closure of $L(B_{n_i}^p)$.  Fix $p\in P$. Choose any open set $U$ such that $\overline{C(p)}\subset U$ and $x\notin \overline{U}$. It suffices to prove that there is an $i$ so that  $L(B_{n_i}^p)\subset U$.  Suppose not. Choose $y_i\in L(B_{n_i}^p)\setminus U$ for each $i\in \omega$. Then for each $i\in\omega$, choose $q_i\in B_{n_i}^p$ so that $y_i\in K_{q_i}$. Define $p_i^\ast =p\cup (\bigcup \{q_j: j> i\})$. Hence $\{y_j: j>i\}\subset K_{p_i^\ast}$. By the property of the Hausdorff metric $d^H$, it is straightforward to verify that  $d^H(p_{i}^\ast, p)\leq \epsilon_i$, hence $p_{i}^\ast\in B_{n_i}^p$ which implies that $K_{p_{i}^\ast}\subset L(B_{n_i}^p)$ for each $i\in\omega$. Therefore, $\bigcap \{K_{p_i^\ast}:i\in \omega\}\subseteq C(p)$. Then, all the limit points of $\{y_i: i\in \omega\}$ are in $C(p)$ which contradicts with $C(p)\subset U$ and the choices of $\{y_i: i\in \omega\}$.

\medskip

Finally, we prove that the family $\mathcal{L}=\{\overline{L(B_n)}: x\notin \overline{L(B_n)}\}$ is a cover of $X\setminus \{x\}$, furthermore, $\{B: B=X\setminus S \text{ for some }S\in \mathcal{L}\}$ is a local base at $x$. For each $p\in P$, there is an $i\in \omega$ such that that $x$ is not in the closure of $L(B_{n_i}^p)$. Hence $\overline{L(B_{n_i}^p)}\in \mathcal{L}$. Since $K_p\subset L(B_{n_i}^p)$, this completes the proof.  \end{proof}

\begin{thm} Let $P=\mathcal{K}(M)$ for some separable metric space $M$ such that $P$ has Calibre~$\omega_1$. Any compact space $X$ with a $P$-base is first-countable. \end{thm}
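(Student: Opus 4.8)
The plan is to obtain this statement as the formal conjunction of the two results already proved in this section, so that no genuinely new argument is required. The poset $P=\mathcal{K}(M)$ is assumed to satisfy both relevant hypotheses at once: it has Calibre~$\omega_1$, and it is of the form $\mathcal{K}(M)$ for a separable metric space $M$. These two features feed the two prior results respectively, and the conclusion of the first is exactly the hypothesis missing from the second, so the whole proof reduces to chaining a tightness bound into a first-countability conclusion.

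First I would invoke the corollary asserting that every compact Hausdorff space with a $P$-base, for any poset $P$ of Calibre~$\omega_1$, has countable tightness. Since $X$ is compact and carries a $\mathcal{K}(M)$-base, and $\mathcal{K}(M)$ is assumed to have Calibre~$\omega_1$, this immediately gives $t(X)=\omega$. I would then feed this into Theorem~\ref{ctfstc}, whose hypotheses are precisely that $X$ be compact of countable tightness carrying a $\mathcal{K}(M)$-base for some separable metric $M$. All three conditions now hold for $X$, so that theorem concludes that $X$ is first-countable, which is the desired statement.

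Because the deduction is just a two-step application of established implications, I do not expect any real obstacle in this particular proof; the substantive work has already been done in the two prerequisite results, one extracting countable tightness from the Calibre~$\omega_1$ condition and the other upgrading countable tightness to first-countability using the second-countability of $\mathcal{K}(M)$ under the Hausdorff metric. The only point worth flagging is that the Calibre~$\omega_1$ assumption on $\mathcal{K}(M)$ is genuinely an added hypothesis on $M$ (equivalently, on the ambient model), since $\mathcal{K}(M)$ enjoys only Calibre~$(\omega_1,\omega)$ in general; it is exactly this strengthening that makes the first step available.
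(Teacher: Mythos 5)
Your proof is correct and takes essentially the same route as the paper's own two-line proof: the authors likewise first deduce countable tightness of $X$ from the Calibre~$\omega_1$ corollary and then apply Theorem~\ref{ctfstc} to conclude first-countability. Nothing is missing; if anything, your explicit appeal to the general Calibre~$\omega_1$ tightness corollary is cleaner than the paper's citation of its $\omega^\omega$-specific instance.
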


\begin{proof}By Corollary~\ref{ctblt}, $X$ has countable tightness since $P$ has Calibre~$\omega_1$. Then by Theorem~\ref{ctfstc}, $X$ is first-countable.  \end{proof}

Then using the fact that $\omega^\omega$ has Calibre~$\omega_1$ under the assumption $\omega_1<\mathfrak{b}$, we get a positive answer to Problem 8.7.7 in \cite{Banakh2019}.

\begin{cor} Assume $\omega_1<\mathfrak{b}$. A compact space has an $\omega^\omega$-base if and only if it is first countable.
 \end{cor}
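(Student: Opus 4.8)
The plan is to prove the two implications separately, with the forward direction (compact $+$ $\omega^\omega$-base $\Rightarrow$ first-countable) carrying all the content under $\omega_1<\mathfrak{b}$, and the reverse direction being a routine ZFC observation. The crux is to realize the poset $\omega^\omega$ as $\mathcal{K}(M)$ for a suitable separable metric space $M$ and then quote the immediately preceding Theorem.

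First I would handle the forward direction. By the discussion in Section~\ref{compact}'s preliminaries, any Polish space that is not locally compact satisfies $\mathcal{K}(M)=_T\omega^\omega$; so take $M$ to be the space of irrational numbers (equivalently, Baire space $\omega^\omega$), which is Polish and not locally compact, whence $\mathcal{K}(M)=_T\omega^\omega$. Under the hypothesis $\omega_1<\mathfrak{b}$ the poset $\omega^\omega$ has Calibre~$\omega_1$, and since Calibre~$\omega_1$ is a Tukey invariant among directed posets, $\mathcal{K}(M)=_T\omega^\omega$ also has Calibre~$\omega_1$. Because $\omega^\omega=_T\mathcal{K}(M)$, the first Proposition (applied in both directions) shows that a space has an $\omega^\omega$-base if and only if it has a $\mathcal{K}(M)$-base. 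Hence any compact space $X$ with an $\omega^\omega$-base has a $\mathcal{K}(M)$-base with $\mathcal{K}(M)$ of Calibre~$\omega_1$, and the preceding Theorem applies verbatim to conclude that $X$ is first-countable.

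For the reverse direction I would simply observe that a first-countable space $X$ has, at each point $x$, a decreasing countable neighborhood base, that is, a neighborhood $\omega$-base, so $\mathcal{T}_x(X)\leq_T\omega$. The diagonal (constant-function) embedding gives $\omega\leq_T\omega^\omega$, so by transitivity $\mathcal{T}_x(X)\leq_T\omega^\omega$ for every $x$, and therefore $X$ has an $\omega^\omega$-base. This implication uses no set-theoretic hypothesis and holds without compactness.

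I do not expect a genuine obstacle here, since the statement is a corollary assembled from results already in place; the only point requiring care is the bookkeeping around the Tukey equivalence $\omega^\omega=_T\mathcal{K}(M)$. Specifically, one must be sure that this equivalence transfers \emph{both} the Calibre~$\omega_1$ property of $\omega^\omega$ and the existence of a base between the two posets, so that the hypotheses of the preceding Theorem are met exactly. A secondary cautionary note is the choice of $M$: the rationals $\mathbb{Q}$ will \emph{not} serve, since $\mathcal{K}(\mathbb{Q})$ is strictly above $\omega^\omega$ in the Tukey order, so one genuinely needs a Polish, non-locally-compact $M$.
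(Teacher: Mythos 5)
Your proposal is correct and follows essentially the same route as the paper: realize $\omega^\omega$ up to Tukey equivalence as $\mathcal{K}(M)$ for a Polish, non-locally-compact $M$ (the irrationals, via Fremlin's observation quoted in the Preliminaries), note that $\omega^\omega$ has Calibre~$\omega_1$ under $\omega_1<\mathfrak{b}$ and that this transfers to $\mathcal{K}(M)$, and then apply the preceding Theorem, the converse direction being the trivial observation $\omega\leq_T\omega^\omega$. Your extra bookkeeping (Tukey invariance of Calibre~$\omega_1$, and the warning that $\mathbb{Q}$ cannot play the role of $M$) is accurate and only makes explicit what the paper leaves implicit.
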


 \section{Scattered Compact Spaces}\label{scacompact}

We recall that a topological space $X$ is scattered if each non-empty subspace of $X$ has an isolated point. The complexity of a scattered space can be determined by the scattered height.

For any subspace $A$ of a space $X$, let $A'$ be the set of all non-isolated
points of $A$. It is straightforward to see that $A'$ is a closed subset of $A$. Let $X^{(0)} = X$ and define $X^{(\alpha)} = \bigcap_{\beta<\alpha} (X^{(\beta)} ) '$ for each $\alpha > 0$. Then a space $X$ is  scattered if $X (\alpha) = \emptyset$ for some ordinal $\alpha$. If $X$ is scattered, there exists a unique ordinal $h(x)$ such that $x\in X^{(h(x))}\setminus X^{(h(x)+1)}$ for each $x\in X$. The ordinal $h(X)=\sup\{h(x): x\in X\}$  is called the scattered
height of $X$ and is denoted by $h(X)$. It is known that any compact scattered space is zero-dimensional. Also, it is straightforward to show that for any compact scattered space $X$, $X^{(h(x))}$ is a non-empty finite subset.

%\begin{thm} Let $P$ be a poset with Calibre~$\omega_1$ and $X$ a scattered zero-dimensional Lindel\"{o}f space with a $P$-base. Then $X$ is countable. \end{thm}
\begin{thm}\label{scomg1} Let $P$ be a poset with Calibre~$\omega_1$ and $X$ a scattered compact space with a $P$-base. Then $X$ is countable. \end{thm}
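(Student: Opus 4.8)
The plan is to argue by contradiction: assuming $X$ is uncountable, I will locate a single point $y$ at which the neighborhood filter is too complex to be a Tukey quotient of a Calibre~$\omega_1$ poset. Passing from the $P$-base at $y$ to the complementary compact cover $\{K_p : p\in P\}$ of $X\setminus\{y\}$, where $K_p = X\setminus U_p[y]$ (so that $K_p\subseteq K_{p'}$ whenever $p\le p'$ and $\bigcup_p K_p = X\setminus\{y\}$), Calibre~$\omega_1$ says that any $\omega_1$-indexed family $\{K_{p_\alpha}:\alpha<\omega_1\}$ has an uncountable subfamily trapped inside a single $K_{p^*}$. The configuration I want to produce is therefore an uncountable set $Z=\{z_\alpha:\alpha<\omega_1\}\subseteq X\setminus\{y\}$ whose \emph{unique} complete accumulation point is $y$. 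For then every compact subset of $X\setminus\{y\}$ meets $Z$ in a countable set (cover it by neighborhoods each meeting $Z$ countably and extract a finite subcover), so choosing $p_\alpha$ with $z_\alpha\in K_{p_\alpha}$ and applying Calibre~$\omega_1$ yields a single $K_{p^*}$ containing uncountably many $z_\alpha$ while being a compact set avoiding $y$, a contradiction.

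The heart of the matter is thus the extraction lemma, which is where scatteredness is used essentially: \emph{every uncountable scattered compact space contains an uncountable subset with a unique complete accumulation point lying outside it.} To prove it I would fix any $Z_0\subseteq X$ with $|Z_0|=\omega_1$ and let $F$ be the set of its complete (equivalently, $\omega_1$-) accumulation points. By compactness $F$ is nonempty, and it is readily seen to be closed, hence a nonempty scattered compact space. I would then choose a point $y$ that is isolated in $F$ and, using that compact scattered spaces are zero-dimensional, a clopen set $W$ with $W\cap F=\{y\}$, and set $Z=(Z_0\cap W)\setminus\{y\}$. Since $y\in F$ and $W$ is a neighborhood of $y$, the set $Z$ remains uncountable; and since $W$ is clopen, every complete accumulation point of $Z$ lies in $\overline{Z}\cap F\subseteq W\cap F=\{y\}$, while $y$ itself is still one. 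This produces exactly the concentrated set required above, and Proposition~\ref{sub} lets me pass the $P$-base down to any subspace I wish to work in.

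The main obstacle is securing the \emph{uniqueness} of the accumulation point, and this is precisely where the theorem genuinely needs scatteredness: in $[0,1]$ every uncountable set has uncountably many complete accumulation points (consistent with $[0,1]$ having an $\omega^\omega$-base), so no such $Z$ exists there. It is tempting to instead seek an uncountable \emph{discrete} subspace and a copy of the one-point compactification of an uncountable discrete space, but I would deliberately avoid this route, since uncountable scattered compacta of countable spread exist and so need contain no uncountable discrete subspace, whereas the unique-accumulation-point configuration is always available through the lemma and already suffices. Scatteredness defeats the obstacle exactly because the closed set $F$ of accumulation points is itself scattered, hence has an isolated point that a clopen set can peel off; this single use of the structure theory of scattered compacta (existence of isolated points together with zero-dimensionality) is what does the real work. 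I expect the only routine checks to be that $F$ is closed and that the complete accumulation points of $Z$ are confined to $W$.
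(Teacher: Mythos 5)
Your proof is correct, and it takes a genuinely different route from the paper's. The paper argues by transfinite induction on scattered height: if $h(X)=\alpha$, the top level $X^{(\alpha)}$ is finite, one passes to a clopen neighborhood $Y$ of a top-level point, and the complements $Y\setminus B_p$ of the basic neighborhoods there are compact of height $<\alpha$, hence countable by the inductive hypothesis; it then concludes with exactly your dichotomy (pigeonhole if $\{p_\alpha:\alpha<\omega_1\}$ is a countable subset of $P$, Calibre~$\omega_1$ if it is uncountable). You eliminate the induction with your extraction lemma: the set $F$ of complete accumulation points of an uncountable $Z_0$ is closed, hence itself a compact scattered space, so some $y$ is isolated in $F$ and can be peeled off by a clopen $W$ (in fact regularity alone suffices: take open $V\ni y$ with $\overline{V}\cap F=\{y\}$), and $Z=(Z_0\cap W)\setminus\{y\}$ is uncountable with unique complete accumulation point $y$; consequently every compact subset of $X\setminus\{y\}$ meets $Z$ countably, which is precisely the role the inductive hypothesis plays in the paper. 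The paper's induction buys a template that it reuses for its later theorem on finite scattered height under the weaker hypothesis Calibre~$(\omega_1,\omega)$; your lemma buys an induction-free and localized statement --- an uncountable scattered compactum has a specific point at which the neighborhood filter is not Tukey below any poset with Calibre~$\omega_1$ --- which also subsumes the paper's one-point Lindel\"ofication example. Two small repairs: make the pigeonhole case explicit, since Calibre~$\omega_1$ as defined applies only to $\omega_1$-sized \emph{subsets} of $P$ and says nothing when the $p_\alpha$ repeat; and note that your aside that uncountable scattered compacta of countable spread ``exist'' is consistency-dependent (true under $\diamondsuit$, false under PFA, where such a space would be a compact scattered S-space), though nothing in your argument depends on it.
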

\begin{proof} If $h(X)=0$, then $X$ is countable because it is compact.

Assume $h(X)=\alpha$ and any compact scattered space with a $P$-base is countable if it has a scattered height $<\alpha$.  Since $X$ is compact, $X^{(\alpha)}$ is a nonempty finite subset of $X$. List $X^{(\alpha)}=\{x_1, \ldots, x_n\}$. For each $i\in \{1, \ldots, n\}$, take a closed and open neighborhood $U_i$ of $x_i$ with $U_i \cap X(\alpha)=\{x_i\}$. Then $X\setminus \bigcup \{U_i: i=1, \ldots, n\}$ is a scattered compact space with scattered height $<\alpha$, hence it is countable by the assumption. So it is sufficient to show that $U_i$ is countable for each $i=1, \ldots, n$.

Fix $i\in\{1, \ldots, n\}$. Consider the subspace $Y=U_i\cap X$. By proposition~\ref{sub},  $Y$ has a neighborhood $P$-base $\{B_p: p\in P\}$ at $\{x_i\}$.  For each $p\in P$, $Y\setminus B_p$ is a compact subspace with scattered height $<\alpha$, hence is countable by the inductive assumption.

Assume that $Y$ is uncountable. Take an uncountable subset $\{y_\alpha: \alpha<\omega_1\}$ of $Y\setminus \{x_i\}$.  For each $\alpha<\omega_1$, we choose $p_\alpha\in P$ such that $y_\alpha\notin B_{p_\alpha}$.

If $\{p_\alpha: \alpha<\omega_1\}$ is countable, there is a $p^\ast\in \{p_\alpha: \alpha<\omega_1\}$ such that there is an uncountable subset $D$ of $\{y_\alpha: \alpha<\omega_1\}$ such that $D\subset Y\setminus B_{p^\ast}$ which is a contradiction.

If $\{p_\alpha: \alpha<\omega_1\}$ is uncountable, then it has an uncountable subset $P'$ which is bounded above using the Calibre~$\omega_1$ property of $P$. List $P'=\{p_{\alpha_\gamma}: \gamma<\omega_1\}$. Let $p^\ast$ be an upper bound of $P'$. Then we have that $y_{\alpha_\gamma}\notin B_{p^{\ast}}$ for each $\gamma<\omega_1$. This also contradicts with the fact that $Y\setminus B_{p^{\ast}}$ is countable. This finishes the proof. \end{proof}

Using the same approach we obtain the following example.
\begin{ex} The one point Lindel\"{o}fication of uncountably many points doesn't have a $P$-base if $P$ has Calibre~$\omega_1$, hence, under the assumption $\omega_1<\mathfrak{b}$, it doesn't have an $\omega^\omega$-base.
\end{ex}

The example above uses the fact that under the assumption $\omega_1<\mathfrak{b}$, the poset $\omega^\omega$ has Calibre~$\omega_1$. Furthermore, using Theorem~\ref{scomg1}, we obtain the following result which  answers Problem 8.6.9 in \cite{Banakh2019} positively. This also gives a partial positive answer to Problem 8.6.8 in the same paper.

\begin{cor} Assume $\omega_1<\mathfrak{b}$. Any scattered compact space with an $\omega^\omega$-base is countable, hence metrizable.  \end{cor}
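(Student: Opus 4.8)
The plan is to obtain this corollary as an immediate specialization of Theorem~\ref{scomg1}, once I verify that $\omega^\omega$ meets the Calibre hypothesis of that theorem under the assumption $\omega_1<\mathfrak{b}$. As recorded in the Preliminaries, $\omega_1<\mathfrak{b}$ forces $\omega^\omega$ to have Calibre~$(\omega_1,\omega_1,\omega_1)$, i.e. Calibre~$\omega_1$. I would justify this by the standard argument: given any family $\{f_\alpha:\alpha<\omega_1\}\subseteq\omega^\omega$, since $\omega_1<\mathfrak{b}=\add(\omega^\omega,\leq^\ast)$ the family is $\leq^\ast$-bounded by some $g$; setting $F_\alpha=\{n:f_\alpha(n)>g(n)\}$, a pigeonhole over the countably many finite subsets of $\omega$ (and then over the countably many possible value-tuples on the common exceptional set) produces an uncountable $A\subseteq\omega_1$ on which the $f_\alpha$ share a common everywhere upper bound. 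Thus every $\omega_1$-sized subset of $\omega^\omega$ has an uncountable bounded subset, which is exactly Calibre~$\omega_1$.

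With that fact in hand, let $X$ be a scattered compact space with an $\omega^\omega$-base. Taking $P=\omega^\omega$, the space $X$ has a $P$-base for a poset $P$ of Calibre~$\omega_1$, so Theorem~\ref{scomg1} applies verbatim and yields that $X$ is countable. No additional combinatorics is required here, since the transfinite induction on the scattered height $h(X)$ is already carried out inside Theorem~\ref{scomg1}; the corollary merely substitutes the concrete poset $\omega^\omega$ for the abstract $P$.

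It then remains to upgrade countability to metrizability. Because $X$ is compact, carries an $\omega^\omega$-base, and $\omega_1<\mathfrak{b}$, the first-countability result established in Section~\ref{compact} shows that $X$ is first-countable. Fixing for each $x\in X$ a countable local base and taking the union of these over the now countable set $X$, one obtains a countable family which is readily checked to be a base for the topology; hence $X$ is second-countable. Finally, since $X$ is regular (all spaces in the paper are assumed regular) and second-countable, Urysohn's metrization theorem gives that $X$ is metrizable.

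I do not anticipate any genuine obstacle, as the statement is essentially a reformulation of Theorem~\ref{scomg1} for the specific poset $\omega^\omega$. The only two points deserving care are the passage from a $\leq^\ast$-bound to an everywhere upper bound on an uncountable subfamily (the precise content of ``$\omega^\omega$ has Calibre~$\omega_1$'' under $\omega_1<\mathfrak{b}$) and the routine verification that a countable first-countable space is second-countable; both are elementary and can be dispatched in a line each.
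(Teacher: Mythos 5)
Your proposal is correct and follows exactly the paper's route: the corollary is obtained by specializing Theorem~\ref{scomg1} to $P=\omega^\omega$, using the fact (recorded in the Preliminaries) that $\omega_1<\mathfrak{b}$ gives $\omega^\omega$ Calibre~$\omega_1$. Your added details---the pigeonhole argument upgrading a $\leq^\ast$-bound to an everywhere bound on an uncountable subfamily, and the metrizability step via first-countability and Urysohn's theorem---are correct elaborations of what the paper leaves implicit.
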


%\begin{prop} Let $\kappa$ be a limit ordinal with countable cofinality. Let $X$  a compact Hausdorff space with scattered height $\kappa$. For any $x\in X^{(\kappa)}$, $X$ has countable tightness at $x$. \end{prop}

It was proved  in \cite{Banakh2019} that any compact spaced with an $\omega^\omega$-base and finite scattered height is countable, hence metrizable. Next, we show that the same result holds for any compact space with a $P$-base and finite scattered height if $P$ has Calibre~$(\omega_1, \omega)$.

\begin{thm} Let $P$ be a poset with Calibre~$(\omega_1, \omega)$ and $X$  a compact Hausdorff space with a $P$-base and finite scattered height. Then $X$ is countable, hence metrizable. \end{thm}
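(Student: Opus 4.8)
\emph{Approach.} The plan is to induct on the finite scattered height $n=h(X)$, with the real work being a topological extraction lemma that, together with the weakness of Calibre~$(\omega_1,\omega)$, yields a contradiction from uncountability. For $n=0$ the space $X$ is compact and discrete, hence finite. For the inductive step, write $X^{(n)}=\{x_1,\dots,x_k\}$, a finite set since $X$ is compact scattered, and, using that compact scattered spaces are zero-dimensional, choose pairwise disjoint clopen sets $U_1,\dots,U_k$ with $U_i\cap X^{(n)}=\{x_i\}$. The remainder $X\setminus\bigcup_i U_i$ is clopen of scattered height $<n$, so it is countable by the inductive hypothesis, and it suffices to prove each $U_i$ is countable. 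Fix $i$ and put $Y=U_i$; then $Y$ is clopen and compact with $Y^{(n)}=\{x_i\}$, so $Y$ has a unique point of top height. Its derived set $Y'=Y^{(1)}$ has scattered height $n-1$ and, being a subspace of $X$, carries a $P$-base by Proposition~\ref{sub}; by the inductive hypothesis $Y'$ is \emph{countable}. Hence if $Y$ were uncountable, its set $D=Y\setminus Y'$ of isolated points would be uncountable while $Y'$ is countable.

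\emph{The heart of the argument} is to show that such a $Y$ must contain a subspace homeomorphic to the one-point compactification $A(\omega_1)$ of an uncountable discrete space. Call $z$ a \emph{condensation point of $D$} if every clopen neighborhood of $z$ meets $D$ in an uncountable set, and let $A$ be the set of such points. A finite-subcover argument shows $A$ is nonempty (otherwise finitely many clopen sets, each meeting $D$ countably, would cover $Y$) and closed, and clearly $A\subseteq Y'$. Since $A$ is nonempty and scattered it has an isolated point $z_0$, so there is a clopen $W_0$ with $W_0\cap A=\{z_0\}$; replacing $Y$ by $W_0$ we may assume $z_0$ is the \emph{unique} condensation point of $D$ and that $D$ is still uncountable. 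Now enumerate the countable set $Y'\setminus\{z_0\}=\{z_m:m<\omega\}$. Each $z_m$ is not a condensation point, so it has a clopen neighborhood $N_m$ with $N_m\cap D$ countable; hence $C=\bigcup_m(N_m\cap D)$ is countable and $D_0:=D\setminus C$ is uncountable. By construction $D_0$ meets no $N_m$, so no $z_m$ lies in $\overline{D_0}$, and since the points of $D_0$ are isolated in $Y$ they accumulate only in $Y'$; thus every infinite subset of $D_0$ accumulates exactly at $z_0$. Therefore $D_0\cup\{z_0\}$ is the one-point compactification of the discrete set $D_0$, and trimming $D_0$ to size $\omega_1$ gives a copy of $A(\omega_1)$ inside $Y\subseteq X$.

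\emph{The contradiction.} This copy $Z=D_0\cup\{z_0\}$ has a $P$-base by Proposition~\ref{sub}; fix a neighborhood $P$-base $\{B_p:p\in P\}$ at $z_0$ in $Z$. In $A(\omega_1)$ every neighborhood of $z_0$ is cofinite in $D_0$, so each $B_p$ omits only finitely many points of $D_0$. Enumerate $D_0=\{d_\alpha:\alpha<\omega_1\}$ and for each $\alpha$ pick $p_\alpha$ with $d_\alpha\notin B_{p_\alpha}$. If $\{p_\alpha:\alpha<\omega_1\}$ is countable, some value repeats uncountably often and its $B_p$ omits uncountably many $d_\alpha$, a contradiction. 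Otherwise Calibre~$(\omega_1,\omega)$ provides an infinite subfamily $\{p_{\alpha_j}:j<\omega\}$ with an upper bound $q$; then $B_q\subseteq B_{p_{\alpha_j}}$ forces $d_{\alpha_j}\notin B_q$ for all $j$, so $B_q$ omits the infinitely many $d_{\alpha_j}$, again contradicting cofiniteness. Hence $Y$ is countable, so $X$ is countable, and a countable compact Hausdorff space is metrizable.

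\emph{Main obstacle.} The decisive difficulty is the topological extraction in the middle paragraph: with only Calibre~$(\omega_1,\omega)$ one cannot, as in Theorem~\ref{scomg1}, pull out an \emph{uncountable} bounded family, so a direct neighborhood argument at the top point $x_i$ does not close. What unlocks it is that the derived set $Y'$ is \emph{countable}, supplied by the inductive hypothesis on height: outside a single isolated condensation point $z_0$ there remain only countably many competing limit points, each absorbing only countably many isolated points, so uncountably many isolated points are forced to pile up at $z_0$ and produce a copy of $A(\omega_1)$, whose neighborhood filter $[\omega_1]^{<\omega}$ fails Calibre~$(\omega_1,\omega)$.
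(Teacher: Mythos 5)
Your proof is correct, but it organizes the inductive step genuinely differently from the paper's. After the same reduction to a singleton top level, the paper defines $m$ to be the greatest level with $X^{(m)}$ uncountable and splits into the cases $m=n-1$ and $m<n-1$; in both cases it fixes the $P$-base at the top point $x$, shows the relevant uncountable set $S\subseteq X^{(m)}$ satisfies that $S\setminus B_p$ is \emph{finite} for every $p$ (in the second case after first deleting clopen, countable-by-induction neighborhoods of the countably many points of $X^{(m+1)}\setminus\{x\}$), and then runs exactly your pigeonhole-plus-Calibre~$(\omega_1,\omega)$ contradiction. You instead apply the inductive hypothesis once, to the entire derived set $Y'$ (a compact space with a $P$-base, by Proposition~\ref{sub}, of height $n-1$), concluding it is countable outright; this observation in fact makes the paper's two-case analysis redundant, since it forces the paper's $m$ to equal $0$. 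The price is that you no longer know where the uncountably many isolated points accumulate, so you need the condensation-point (Cantor--Bendixson) extraction to produce a closed copy of $A(\omega_1)$ anchored at a point $z_0$ that need not be the top point, and you localize the contradiction there: the trace of a $P$-base on $A(\omega_1)$ consists of cofinite neighborhoods of the limit point, which is incompatible with Calibre~$(\omega_1,\omega)$. Your route buys modularity --- a reusable lemma (an uncountable compact scattered space with countable derived set contains a closed copy of $A(\omega_1)$) plus the isolated fact that $A(\omega_1)$ has no $P$-base when $P$ has Calibre~$(\omega_1,\omega)$ --- while the paper's route only ever uses the $P$-base at one predetermined point and needs no condensation-point machinery. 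The closing Calibre argument (countable range yields one $B_p$ omitting uncountably many points; uncountable range yields a countable bounded family whose upper bound omits infinitely many) is identical in both proofs.
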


\begin{proof} Fix a natural number $n>0$. Assume that any compact Hausdorff space with scattered height $<n$ is countable. Let $X$ be a compact Hausdorff space with scattered height $n$. We'll show that $X$ is countable.  As discussed in the proof of Theorem~\ref{scomg1}, we could assume that $X^{(n)}$ is a singleton, denoted by $x$, without loss of generality. Suppose, for contradiction, that $X$ is uncountable. Define $m$ to be the greatest natural number such that $X^{(m)}$ is uncountable and $X^{(m+1)}$ is countable. Then there are two cases: 1. $m=n-1$; 2. $m<n-1$. We will obtain contradictions in both cases.

First assume that $m=n-1$. Then we fix a neighborhood $P$-base $\{B_p: p\in P\}$ at $x$. For each $p\in P$, $X^{(m)}\setminus B_p$ is finite as $X$ is compact and $B_p$ is open. Pick an uncountable subset $\{x_\alpha: \alpha<\omega_1\}$ of $X^{(m)}$ with $x_\alpha\neq x$ for all $\alpha<\omega_1$. For each $\alpha<\omega_1$, there is a $p_\alpha\in P$ such that $x_\alpha\notin B_{p_\alpha}$. If $\{p_\alpha: \alpha<\omega_1\}$ is countable, then there exists $p^\ast\in \{p_\alpha: \alpha<\omega_1\}$ such that $X^{(m)}\setminus B_{p^\ast}$ is uncountable which is a contradiction.  If $\{p_\alpha: \alpha<\omega_1\}$ is uncountable,  we can find a countable bounded subset $\{p_{\alpha_n}: n\in\omega\}$ of $\{p_\alpha: \alpha<\omega_1\}$ using the Calibre~$(\omega_1, \omega)$ property of $P$. Let the upper bound of $\{p_{\alpha_n}: n\in\omega\}$ be $p^\ast$. Then, $x_{\alpha_n}\notin B_{p^\ast}$ for each $n\in\omega$. This is a contradiction.

Now we assume that $m<n-1$. Then $X^{(m+1)}\setminus \{x\}$ is countable which can be  listed as $\{x_\ell: \ell\in \omega\}$. For each $\ell$, pick a closed and open neighborhood $U_\ell$ of $x_\ell$. Then for each $\ell<\omega$, $U_\ell$ is a compact subspace with scattered height $<n$, hence is countable. Therefore, $X^{(m)}\setminus \bigcup \{U_\ell:\ell\in \omega\}$ is uncountable. Pick an uncountable subset $S=\{x_\alpha: \alpha<\omega_1\}$ of $X^{(m)}\setminus (\{x\}\cup (\bigcup \{U_\ell:\ell\in \omega\}))$. Fix a neighborhood $P$ base $\{B_p: p\in P\}$ at $x$. For each $p\in P$, $S\setminus B_p$ is finite. Similarly as in the proof of case 1, we could obtain a contradiction using the Calibre~$(\omega_1, \omega)$ property of $P$.  \end{proof}

The result above doesn't hold for compact space with uncountable scattered height since the space $\omega_1+1$ has a $\mathcal{K}(\mathbb{Q})$-base. However, we don't know the answer to the following problem.

\begin{qu} Assume that $\omega_1=\mathfrak{b}$. Let $P$ be a poset with Calibre~$(\omega_1, \omega)$ and $X$ be any compact Hausdorff space with a $P$-base and countable scattered height. Is $X$ countable? \end{qu}

%\begin{thm}Let $M$ be a separable metric space and $X$  a compact Hausdorff space with a $\mathcal{K}(M)$-base and countable scattered height. Then $X$ is countable, hence metrizable.   \end{thm}

%\begin{proof} Let $P=\mathcal{K}(M)$. By Theorem~\ref{fin_sca}, the result holds for any compact space with finite scattered height. Now fix a countable ordinal $\alpha$ and  suppose inductively that the result holds for any $\beta<\alpha$.

%If $\alpha$ is a limit ordinal, then it is sufficient to show that for any countable limit ordinal $\alpha$ if $X$ is compact space with $X^{(\alpha)}$ being a singleton $\{x\}$, then $X$ is countable. Fix a sequence of ordinals $\{\alpha_n: n\in\omega\}$ which is confinal in $\alpha$. Fix an open $P$-base $\{U_p[x]:p\in P\}$ at $x$ and $K_p= X\setminus U_p[x]$ for each $p\in P$. We again use the ideas and techniques in the proof of Theorem~\ref{XXX}. We will use the same notations also.

%\end{proof}

\section{Calibre $\omega_1$ and  non-first-countable compact space}\label{calbreo1}

We prove that there is a model of Martin's Axiom in which  there is a
compact space that has a $P$-base  for a  poset
 $P$ with Calibre $\omega_1$. This space will be the space constructed
by Juhasz, Koszmider, and Soukup in the paper
\cite{JKS2009}. This article \cite{JKS2009}
shows there is a forcing notion
that forces the existence of a
 first-countable initially $\omega_1$-compact locally compact space
 of cardinality $\omega_2$
 whose one-point compactification has countable tightness. We must
 prove that there is a poset $P$ as above. We must also show that
 extra properties of the space ensure that we can perform a further
 forcing to obtain a model of
 Martin's Axiom and that the desired properties of  a space naturally
 generated from the original space possesses these same properties in
 the final model.  The reader may be interested to note that in this
 way we produce a model of Martin's Axiom and $\mathfrak c=\omega_2$
in which there is a compact
 space of countable tightness that is not sequential. This is
 interesting because Balogh proved in \cite{Balogh1989}
that the
forcing axiom, PFA, implies that compact spaces of countable tightess
are sequential. It was first shown in \cite{Dow2016} that the
celebrated Moore-Mrowka problem was independent of Martin's Axiom
plu $\mathfrak c=\omega_2$. The methods in \cite{Dow2016} are indeed
based on the paper \cite{JKS2009} using the notion of T-algebras first
formulated in \cite{Kosz1999}. The example in \cite{JKS2009} is itself
a space generated by
a T-algebra but is not explicitly
formulated as such because of its simpler structure.

 To do all this, at minimum cost, we must explicitly reference a
 number of statements and proofs from \cite{JKS2009}.
The construction is modeled on the following
natural property of locally compact scattered topology,
 $\tau$, with  base set an ordinal $\mu$ in which
 initial segments are open. The well-ordering on
 the underlying set arises canonically from the fact
 that such spaces are right-separated and scattered.
    There are functions
 $H$ with domain $\mu$ and a function $i:[\mu]^2 \rightarrow
[\mu]^{<\aleph_0}$ satisfying that for all
 $\alpha<\beta <\mu$:
\begin{enumerate}
\item $\alpha \in H(\alpha)\subset\alpha+1$ and
  $H(\alpha)$ is a compact open set (i.e. $H(\alpha)\in\tau$),
  \item $i(\alpha,\beta)$ is a finite subset of $\alpha$,
  \item if $\alpha \notin H(\beta)$, then
    $H(\alpha)\cap H(\beta)\subset \bigcup
    \{ H(\xi) : \xi \in i(\alpha,\beta)\}$
  \item if $\alpha\in H(\beta)$, then
        $H(\alpha)\setminus  H(\beta)\subset \bigcup     \{ H(\xi) :
    \xi \in i(\alpha,\beta)\}$.
\end{enumerate}
Conversely if $H$ and $i$ are functions as in (1)-(4) where
(1) is replaced by simply
\medskip

(1') $\alpha \in H(\alpha)\subset\alpha+1$ (i.e. no mention of
topology)
\medskip

\noindent then using the family $\{ H(\alpha) : \alpha \in \mu\}$ as
a clopen subbase generates a locally compact scattered topology
on $\mu$ in which $H, i$ satisfy property (1)-(4).

Statements (3) and (4) are combined into a single statement in
\cite{JKS2009} by adopting the notation
$$H(\alpha)*H(\beta) = \begin{cases}
  H(\alpha)\cap H(\beta) & \alpha\notin H(\beta)\\
  H(\alpha)\setminus H(\beta) & \alpha\in H(\beta)
\end{cases}~.$$
As noted in \cite{JKS2009}  a locally compact scattered space
can not have the properties listed above, hence the construction must
be generalized.  Also it is shown above (and in
 \cite{Banakh2019} for $P=\omega^\omega$)
  that a compact scattered space with a $P$-base that
 has Calibre $\omega_1$ will be first countable. \bigskip

 The generalization from \cite{JKS2009} will use almost the same
 terminology and ideas to generate a topology on the base set
 $\omega_2\times \mathbb{C}$ where $\mathbb{C}=2^{\mathbb N}$ is the
 usual Cantor set and, for each $\alpha<\omega_2$,
 $\{\alpha\}\times\mathbb{C}$ will be homeomorphic to $\mathbb{
   C}$.  For $n\in \mathbb{N}=\omega\setminus\{0\}$
 and $\epsilon\in 2$, the notation
 $[n,\epsilon]$ will denote the clopen subset
  $\{ f\in 2^{\mathbb{N}} : f(n)=\epsilon\}$ in $\mathbb{C}$.
 However a critically important aspect of the construction to
 watch for is that every point of the space will have a local base of
 neighborhoods that \textit{splits} only one of the sets in
  $\{ \{\alpha\}\times \mathbb{C} : \alpha\in \omega_2\}$. The
 function $H$ will identify the copies that such a subbasic clopen set
 meets (and contains all except the \textit{top} one).
 More precisely,  $H(\alpha,0)\times \mathbb{C}$
be a subbasic clopen set, and for $n>0$,
the set
 $H(\alpha,n)\subset H(\alpha,0)$ will be used to construct the
subbasic clopen set that intersects $\{\alpha\}\times \mathbb{C}$ as
the set $\{\alpha\}\times[n,1]$. Naturally, $H(\alpha,0)\setminus
H(\alpha,n)$ will generate the subbasic clopen set
corresponding to $\{\alpha\}\times [n,0]$. The function $i$ is
similarly
generalized to be a function from $[\omega_2]^2\times\omega$
into $[\omega_2]^{<\aleph_0}$.  Here is
the definition of a suitable pair of functions from
\cite{JKS2009}*{Definition 2.4}.

\begin{definition} A pair $H:\omega_2\times\omega \rightarrow
   \mathcal P(\omega_2)$ and $i:[\omega_2]^{2}\times\omega\rightarrow [\omega_2]^{<\aleph_0}$ is
$\omega_2$-suitable if the\label{suitable} following conditions hold
   for all $\alpha <\beta <\omega_2$ and $n\in\mathbb{N}$:
   \begin{enumerate}
   \item $\alpha \in H(\alpha,n)\subset H(\alpha,0)\subset \alpha+1$,
   \item $i(\alpha,\beta,n)\in[\alpha]^{<\aleph_0}$,
   \item $H(\alpha,0)*H(\beta,n)\subset
     \bigcup\{ H(\xi,0) : \xi\in i(\alpha,\beta,n)\}$.
   \end{enumerate}
   Also, given an $\omega_2$-suitable pair $H,i$,  define the following sets
   for $\alpha\in \omega_2$, $F\in [\omega_2]^{<\aleph_0}$
 and $n\in \mathbb{N}$:
 \begin{enumerate}
   \setcounter{enumi}{3}
     \item $U(\alpha) = U(\alpha,\mathbb{C}) =
       H(\alpha,0)\times\mathbb{C}$,
     \item $U(\alpha,[n,1]) = (\{\alpha\}\times [n,1])
       \ \cup \ \left( (H(\alpha,n)\setminus \{\alpha\}) \times
       \mathbb{C}\right)$,
     \item $U(\alpha,[n,0]) = U(\alpha,\mathbb{C})\setminus
       U(\alpha,[n,1])$,
       \item $U[F] =\bigcup \{ U(\xi) : \xi\in F\}$.
    \end{enumerate}
\end{definition}

Next we rephrase \cite{JKS2009}*{Lemma 2.5}:

\begin{proposition}
  If $H, i$ is an $\omega_2$-suitable pair then\label{localbase}
    the subbase
  $$\{ U(\alpha,\mathbb{C}) :\alpha\in\omega_2\}
   \cup \{U(\alpha,[n,\epsilon]) : \alpha\in \omega_2, n\in \mathbb N,
   \epsilon \in 2\}$$
   generates a locally compact Hausdorff topology $\tau_H$
 on $\omega_2\times
   \mathbb{C}$ satisfying that
  for all
   $\alpha\in\omega_2$, $n\in\mathbb{N}$, and $r\in \mathbb{C}$,
   \begin{enumerate}
   \item $U(\alpha,\mathbb{C})$,  $U(\alpha,[n,1])$ are compact,
     \item the collection of finite intersections of members of the family
     $$\{ U(\alpha, [n,r(n)]) \setminus U[F]
       : n\in\mathbb N, F\in [\alpha]^{<\aleph_0}\}$$  is a local base at
        $(\alpha,r)$
   \end{enumerate}
\end{proposition}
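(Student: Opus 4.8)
My plan is to verify all four assertions by a single transfinite induction on $\alpha<\omega_2$, proving at stage $\alpha$ the package: $U(\alpha,\mathbb{C})$ and each $U(\alpha,[n,\epsilon])$ are clopen, $U(\alpha,\mathbb{C})$ and $U(\alpha,[n,1])$ are compact, and the family in (2) is a local base at every $(\alpha,r)$, while freely using the same facts at all levels $\xi<\alpha$. The combinatorial engine is condition~(3): rewriting it through the operation $H(\alpha,0)*H(\beta,n)$, for $\alpha<\beta$ with $(\alpha,r)\in U(\beta,[n,1])$ one gets $\alpha\in H(\beta,n)$ and hence $U(\alpha,\mathbb{C})\setminus U(\beta,[n,1])\subseteq U[i(\alpha,\beta,n)]$, a union of finitely many $U(\xi,\mathbb{C})$ with $\xi<\alpha$. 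Thus a higher-level ``compact side'' subbasic set removes from $U(\alpha,\mathbb{C})$ only finitely many strictly lower copies. A preliminary computation I would record is that on the \emph{top copy} $\{\alpha\}\times\mathbb{C}$ every subbasic set of level $\beta>\alpha$ is all-or-nothing (membership of $(\alpha,s)$ does not depend on $s$), every level $<\alpha$ set misses it, and only the level-$\alpha$ sets $U(\alpha,[n,\epsilon])$ cut it, along the Cantor-clopen set $[n,\epsilon]$; consequently the subspace topology on the top copy is exactly that of $\mathbb{C}$.

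The technical heart, and the step I expect to be the main obstacle, is the compactness of $U(\alpha,\mathbb{C})$. Given an open cover, I would first cover the compact top copy $\{\alpha\}\times\mathbb{C}$ by finitely many basic sets. The decisive point is that a finite clopen cover of $\{\alpha\}\times\mathbb{C}\cong\mathbb{C}$ is \emph{pattern-complete}: the level-$\alpha$ slices occurring in it constrain only finitely many coordinates $n_1,\dots,n_N$, and covering $\mathbb{C}$ forces every binary pattern on $\{n_1,\dots,n_N\}$ to be accepted by some member. Since a lower copy $\{\xi\}\times\mathbb{C}$ (with $\xi\in H(\alpha,0)\cap\alpha$) sits inside a basic set's level-$\alpha$ part exactly according to the pattern $\big(\mathbf{1}[\xi\in H(\alpha,n_j)]\big)_{j\le N}$, pattern-completeness forces every such lower copy to be covered, save for the finitely many copies deleted by the absorbing sets $U[i(\cdot)]$ attached to the higher-level factors. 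Hence the uncovered remainder is a closed subset of a finite union of $U(\xi,\mathbb{C})$, $\xi<\alpha$, which is compact by the inductive hypothesis; extracting a finite subcover of it completes a finite subcover of $U(\alpha,\mathbb{C})$. Compactness of $U(\alpha,[n,1])$ then follows because it and its complement $U(\alpha,[n,0])$ in $U(\alpha,\mathbb{C})$ are both open, so it is a closed subset of the compact $U(\alpha,\mathbb{C})$.

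With compactness in hand I would establish the Hausdorff property and the clopenness claims together---each $U(\alpha,\mathbb{C})$ and each slice being clopen, so that distinct points are separated either inside a common Cantor copy or, across levels, by a clopen compact $U(\xi,\mathbb{C})\subseteq(\xi+1)\times\mathbb{C}$---and local compactness is then immediate from the compact neighborhood $U(\alpha,\mathbb{C})$. For the local base statement~(2) I would argue inside the compact space $U(\alpha,\mathbb{C})$: first, every point $z=(\xi,s)\ne(\alpha,r)$ of $U(\alpha,\mathbb{C})$ is cut off from $(\alpha,r)$ by a single member of the family in~(2)---by a slice $U(\alpha,[n,r(n)])$ when $z$ lies on the top copy or has $H(\alpha,\cdot)$-pattern different from $r$, and by $U(\alpha,[n_0,r(n_0)])\setminus U[\{\xi\}]$ when $\xi<\alpha$ has the same pattern as $r$ (note $(\alpha,r)\notin U[\{\xi\}]=U(\xi,\mathbb{C})$ while $z\in U[\{\xi\}]$). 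Then, given any open $O\ni(\alpha,r)$, the set $U(\alpha,\mathbb{C})\setminus O$ is compact, so finitely many of these clopen separators $G_{z_1},\dots,G_{z_p}$ cover it; their intersection is a finite intersection of members of the family in~(2), contains $(\alpha,r)$, and lies in $O$.

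The subtlety worth stressing is the asymmetry created by condition~(3): only the ``small'' sides $U(\beta,[n,1])$ (and the top copy for every higher factor) are absorbed into finitely many lower copies, whereas $U(\beta,[n,0])$ and $U(\beta,\mathbb{C})$ can delete infinitely many lower copies from $U(\alpha,\mathbb{C})$. This is exactly why the local base cannot be obtained by fitting one ``fat'' set inside a given neighborhood; the argument must instead pass through compactness of $U(\alpha,\mathbb{C})$, separating the point from each competitor individually and collapsing to a finite intersection. Getting the pattern-completeness bookkeeping of the compactness step right, and threading the induction so that the clopenness used there is already available at lower levels, is where I expect the real work to lie.
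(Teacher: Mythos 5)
The paper itself contains no proof of this proposition: it is quoted verbatim (as a "rephrasing") from \cite{JKS2009}*{Lemma 2.5}, so your attempt can only be measured against that argument. The engine of your compactness induction --- the top copy $\{\alpha\}\times\mathbb{C}$ carrying exactly the Cantor topology, pattern-completeness of a finite cover of it, and absorption of higher-level factors through condition (3) --- is indeed the right one. But there is a genuine gap, and it sits exactly where you anticipate "the real work" to lie: the clopenness claims.

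You read "subbase" literally, so that $\tau_H$ consists of unions of finite intersections of the listed sets only, and you propose to \emph{prove} that each $U(\xi,\mathbb{C})$ is clopen, "together" with Hausdorffness, by induction. This is circular in an unfixable way, because under the literal reading the proposition is false. Take $H(0,n)=\{0\}$ and $H(1,n)=\{0,1\}$ for all $n$, $H(\beta,n)=\{\beta\}$ for $\beta\geq 2$, and $i\equiv\emptyset$; conditions (1)--(3) of Definition \ref{suitable} hold. Let $u\in\mathbb{C}$ be the point with $u(n)=1$ for all $n$. The only subbasic sets containing $(1,u)$ are $U(1,\mathbb{C})$ and the sets $U(1,[n,1])$, and every one of them contains $\{0\}\times\mathbb{C}$; hence every $\tau_H$-open set containing $(1,u)$ contains $\{0\}\times\mathbb{C}$, the topology is not Hausdorff (not even $T_1$), $U(0,\mathbb{C})$ is not closed, and $U(1,[n,1])\setminus U[\{0\}]$ is not a neighborhood of $(1,u)$. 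In general, for $\gamma\in H(\delta,0)\cap\delta$ the point $(\delta,s_\gamma)$, where $s_\gamma(n)=1$ iff $\gamma\in H(\delta,n)$, can never be separated from $\{\gamma\}\times\mathbb{C}$ by the literal subbase: this is precisely the case your phrase "separated \dots across levels, by a clopen compact $U(\xi,\mathbb{C})$" begs, since openness of the complement of $U(\xi,\mathbb{C})$ is the very thing being proved. The same circle infects your compactness step: the "uncovered remainder" $U(\alpha,\mathbb{C})\setminus\bigcup_i B_i$ is closed (hence compact, sitting inside the compact $U[J]$) only if $U(\alpha,\mathbb{C})$ is already known to be closed, which at that stage of your induction it is not.

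The missing idea is that $\tau_H$ is not generated by these sets as a mere subbase: in \cite{JKS2009}, and as the paper's own discussion preceding Definition \ref{suitable} says ("using the family \dots as a \emph{clopen} subbase"), the sets $U(\alpha,\mathbb{C})$ and $U(\alpha,[n,\epsilon])$ are declared clopen, i.e.\ their complements are subbasic open as well. With that reading the circularity evaporates: closedness of every $U(\xi,\mathbb{C})$ and $U(\xi,[n,\epsilon])$ is definitional; Hausdorffness is immediate (distinct levels are separated by some $U(\gamma,\mathbb{C})$ and its complement, equal levels by complementary slices); your remainder is genuinely closed, hence compact; and your pattern-completeness/absorption induction proves compactness once you also bookkeep complemented factors, which condition (3) absorbs in exactly the same way. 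For this you must read (3) as holding for $n=0$ as well (the paper's restriction to $n\in\mathbb{N}$ is an imprecision --- the parenthetical claim in Proposition \ref{giveH} about $i_p(\alpha,\beta,0)=\emptyset$ makes sense only with $n=0$ included); note that this also defeats the "asymmetry" you stress, since $U(\alpha,\mathbb{C})\setminus U(\beta,\mathbb{C})\subseteq U[i(\alpha,\beta,0)]$ whenever $\alpha\in H(\beta,0)$. Your compactness-based derivation of the local base in item (2) then works as written; alternatively, item (2) follows directly from condition (3) without any appeal to compactness.
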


Next, the authors of \cite{JKS2009} have to work very hard to produce
an $\omega_2$-suitable pair so that $\tau_H$ is first-countable and
initially $\omega_1$-compact. The first step is to work in a model in
which there is a special function $f:[\omega_2]^2 \mapsto
[\omega_2]^{\leq\aleph_0}$ called a strong $\Delta$-function. Since we will not
need any properties of this function we omit the definition, but
henceforth assume that $f$ is such a function.  We
record additional minor modifications of
results from \cite{JKS2009}*{4.1,4.2}.

\begin{proposition} There is a ccc poset $P_f$ consisting\label{giveH} of
  quadruples $p = (a_p,h_p,i_p,n_p)$ that are finite approximations of
  an $\omega_2$-suitable pair where
\begin{enumerate}
\item $a_p\in[\omega_2]^{<\aleph_0}$, $n_p\in \omega$
\item    $h_p : [a_p]^2\times n_p \mapsto \mathcal P(a_p)$,
\item $i_p : [a_p]^2\times n_p\mapsto [a_p]^{<\aleph_0}$,
\end{enumerate}
and, for each $P_f$-generic filter $G$, the relations
$$ H = \bigcup \{ h_p : p\in G\}\ \ \mbox{and}\ \ \ i =
\bigcup \{ i_p : p\in G\}$$
are functions that form  an $\omega_2$-suitable pair.
In particular,
 if $p\in G$, $\alpha\in h_p(\beta)$,
 and $i_p(\alpha,\beta,0)=\emptyset$, then
 (in $V[G]$)
 $U(\alpha,\mathbb{C})\subset
 U(\beta,\mathbb{C})$.
\end{proposition}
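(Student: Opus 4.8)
The plan is to establish three things in turn: that the generic object is a genuine pair of total functions satisfying conditions (1)--(3) of Definition~\ref{suitable}, that $P_f$ is ccc, and finally the displayed containment. Order $P_f$ by extension, declaring $p\le q$ when $a_p\supseteq a_q$, $n_p\ge n_q$, and $h_p,i_p$ extend $h_q,i_q$; each condition is required to satisfy the finite restrictions of (1)--(3) for all pairs from $a_p$ and all $n<n_p$, so in particular for $n=0$. Note that the ``in particular'' clause is the only place the index $n=0$ is used, and the finite conditions already carry that data.

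For totality I would run the routine density arguments. Given $\alpha<\beta<\omega_2$ and $m\in\omega$, the set of $p$ with $\{\alpha,\beta\}\subseteq a_p$ and $n_p>m$ is dense: any condition extends to one with these ordinals in $a_p$ and with $n_p$ enlarged, by defining the new $H$-sets as small as (1) permits and choosing the new $i$-values to witness (3). A generic $G$ therefore meets all of them, so $H=\bigcup\{h_p:p\in G\}$ and $i=\bigcup\{i_p:p\in G\}$ are total on the required domains. Each instance of (1)--(3) concerns only finitely many ordinals and a single $n$, so it is witnessed inside a single condition of $G$; since every condition satisfies the finite analogues, $(H,i)$ is $\omega_2$-suitable.

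The crux is ccc, and here the strong $\Delta$-function $f$ is indispensable. Given an uncountable $\{p_\xi:\xi<\omega_1\}$, I would thin it, using the $\Delta$-system lemma together with a counting of the finitely many isomorphism ``types'', so that the domains $a_{p_\xi}$ form a $\Delta$-system with root $R$, of fixed size and order type, with $n_{p_\xi}$ a fixed value, with the conditions pairwise order-isomorphic over $R$, and with their restrictions to $R$ literally identical. It then remains to amalgamate two of them, $p_\xi$ and $p_\eta$, into a common lower bound on $a_{p_\xi}\cup a_{p_\eta}$. On pairs contained in a single $a_{p_\xi}$ or $a_{p_\eta}$ the values are copied and agree on $R$; the only freedom, and the only danger, lies in the \emph{crossing} pairs $\{\gamma,\delta\}$ with the two elements in different wings, where (3) must be arranged. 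The defining property of a strong $\Delta$-function is precisely that, for suitably chosen $\xi,\eta$, it furnishes for each crossing pair a finite $i$-value inside $R$ witnessing (3), equivalently that the two wings can be separated by sets of the form $H(\zeta,0)$ with $\zeta\in R$. I would import this amalgamation step from \cite{JKS2009}*{4.1} essentially verbatim, checking only that the extra $n$-coordinate (including the value $n=0$) causes no essential change. This is where all the genuine combinatorial content sits.

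Finally, the displayed containment is immediate once the forcing is in hand. Suppose $p\in G$ with $\alpha\in H(\beta,0)$ and $i_p(\alpha,\beta,0)=\emptyset$, so that $i(\alpha,\beta,0)=\emptyset$ in $V[G]$. Since $\alpha\in H(\beta,0)$, the operation $*$ at $n=0$ is set difference, and so clause (3) reads $H(\alpha,0)\setminus H(\beta,0)\subseteq\bigcup\{H(\zeta,0):\zeta\in i(\alpha,\beta,0)\}=\emptyset$. Hence $H(\alpha,0)\subseteq H(\beta,0)$, and taking the product with $\mathbb{C}$ gives $U(\alpha,\mathbb{C})=H(\alpha,0)\times\mathbb{C}\subseteq H(\beta,0)\times\mathbb{C}=U(\beta,\mathbb{C})$, as required. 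The main obstacle is thus entirely in the ccc amalgamation; the remaining parts are density bookkeeping and a one-line computation.
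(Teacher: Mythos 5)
Your density sketch and your one-line computation for the ``in particular'' clause are fine, and your instinct to push the combinatorial core onto \cite{JKS2009} matches the paper, which in fact gives no proof of this proposition at all but records it as a minor modification of \cite{JKS2009}*{4.1, 4.2}. The genuine gap is that the poset you actually wrote down is not their poset, and for your poset the ccc claim is simply false. You order $P_f$ by \emph{literal} extension: $p\le q$ requires $h_p\supseteq h_q$ as functions, so the value $h(\alpha,j)$ can never change once defined. Then the conditions $p_\xi$ (for $\xi<\omega_1$) with $a_{p_\xi}=\{\xi,\omega_1\}$, $n_{p_\xi}=1$, $h_{p_\xi}(\xi,0)=\{\xi\}$, $h_{p_\xi}(\omega_1,0)=\{\xi,\omega_1\}$, $i_{p_\xi}(\xi,\omega_1,0)=\emptyset$ are legitimate finite approximations (clause (3) reads $\{\xi\}\setminus\{\xi,\omega_1\}=\emptyset$), but any common extension of $p_\xi$ and $p_\eta$ would need $h(\omega_1,0)$ to equal both $\{\xi,\omega_1\}$ and $\{\eta,\omega_1\}$; so they form an uncountable antichain. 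In \cite{JKS2009} the order only freezes $h$ below the old domain, i.e.\ $p\le q$ demands $h_p(\alpha,j)\cap a_q=h_q(\alpha,j)$, so the sets $h(\alpha,j)$ are allowed to \emph{grow} as the domain grows (only the $i$-values are frozen). This is not cosmetic: under your order every value $H(\alpha,j)$ of the generic pair would be finite, and then the poset could not do the job it is introduced for --- the unnumbered lemma following Proposition \ref{strongextend} produces a $\lambda$ with $\alpha\in H(\lambda,0)$ for uncountably many $\alpha$, forcing $H(\lambda,0)$ to be uncountable, and with all $H$-sets finite every compact subset of $X_H$ meets only finitely many columns $\{\beta\}\times\mathbb{C}$, so $X_H$ could not be countably compact as Proposition \ref{mainProp} requires.

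The second, related omission is that your conditions never mention $f$, so your own claim that the strong $\Delta$-function is ``indispensable'' has nothing to attach to: nothing in your definition of a condition constrains where the witnesses $i_p(\alpha,\beta,j)$ come from, and with unconstrained witnesses a crossing pair $\{\gamma,\delta\}$ is never a problem (the set to be covered lies in the root $R$, and each $\rho$ in it satisfies $\rho<\gamma$ and $\rho\in h(\rho,0)$, so the overlap itself serves as the $i$-value). In \cite{JKS2009} the poset is denoted $P_f$ precisely because the definition of a condition ties the $i_p$-values to the sets $f(\alpha,\beta)$, and, with the correct growth order, the real danger in an amalgamation is not the crossing pairs at all: for $\alpha$ in the root, $h_r(\alpha,j)$ is forced to contain both wings' values while the $i$-values of pairs inside a single wing are frozen, so clause (3) for those \emph{old} pairs can be violated by the \emph{new} elements --- and it is exactly there that the strong $\Delta$-function is used. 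So the amalgamation step cannot be imported ``essentially verbatim'' onto the poset you defined; to make the citation carry the load you must reproduce the \cite{JKS2009} conditions (including the $f$-clause) and their order exactly, after which your remaining bookkeeping and the final computation go through as you wrote them.
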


The space $(\omega_2\times \mathbb{C},\tau_H)$ is shown to have these
additional properties \cite{JKS2009}*{4.2}:

\begin{proposition} If $G$ is $P_f$-generic and $H,i$ are
 defined\label{mainProp}
  as in Proposition \ref{giveH}, then the following hold in $V[G]$:
  \begin{enumerate}
\item $X_H = (\omega_2\times \mathbb{C}, \tau_H)$ is locally compact
  0-dimensional of cardinality $\mathfrak c=2^{\aleph_1}=\aleph_2$,
\item $X_H$ is first-countable,
  \item for every $A\in [X_H]^{\omega_1}$, there is a $\lambda
    <\omega_2$ such that $A\cap U(\lambda,\mathbb{C})$ is uncountable,
    \item for every countable $A\subset X_H$, either $\overline{Y}$ is
      compact or there is an $\alpha<\omega_2$ such that
       $(\omega_2\setminus \alpha)\times \mathbb{C}$ is contained in
      $\overline{Y}$.
  \end{enumerate}
  Consequently $X_H$ is a locally compact, $0$-dimensional, normal,
  first-countable, initially $\omega_1$-compact but non-compact space.
\end{proposition}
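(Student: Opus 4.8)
The plan is to split the four numbered assertions into two groups. The structural content of (1) holds for \emph{every} $\omega_2$-suitable pair and can be read off the general construction. Local compactness and the Hausdorff property are already delivered by Proposition~\ref{localbase}; $0$-dimensionality follows because the generating subbase consists of compact open (hence clopen) sets, so the compact open sets form a base. For the cardinality I would note $|X_H|=|\omega_2\times\mathbb{C}|=\aleph_2\cdot 2^{\aleph_0}$ and reduce to the cardinal arithmetic of the extension: since $P_f$ is ccc of size at most $\aleph_2$, a nice-name count bounds $\mathfrak c$ by $\aleph_2^{\aleph_0}\le (2^{\aleph_1})^{\aleph_0}=2^{\aleph_1}=\aleph_2$, and in $V[G]$ one has $\mathfrak c=2^{\aleph_1}=\aleph_2$, whence $|X_H|=\aleph_2$.

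The real content is in (2), (3), (4), and I would obtain each exactly as the corresponding clause of \cite{JKS2009}*{4.2}, transcribed with the single change that every ordinal $\alpha$ now carries a Cantor copy $\{\alpha\}\times\mathbb{C}$. For first-countability the task is to refine the local base of Proposition~\ref{localbase}(2): a basic neighborhood of $(\alpha,r)$ has the form $\bigl(\bigcap_{n\in s}U(\alpha,[n,r(n)])\bigr)\setminus U[F]$ with $s\in[\mathbb{N}]^{<\aleph_0}$ and $F\in[\alpha]^{<\aleph_0}$, and the sole obstruction to countability is that $F$ ranges over the uncountable family $[\alpha]^{<\aleph_0}$. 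The genericity of $G$ over the model carrying the strong $\Delta$-function $f$ is precisely what forces the ``shadow'' of $(\alpha,r)$ in the lower copies to be countably generated: there is a countable $D\subseteq\alpha$ for which it suffices to let $F$ range over $[D]^{<\aleph_0}$, yielding a countable base. Assertions (3) and (4) are similarly density statements forced by $P_f$: genericity arranges that any $\omega_1$-sized set concentrates uncountably inside a single compact $U(\lambda,\mathbb{C})$, and that the closure of a countable set is either compact or contains a final segment $(\omega_2\setminus\alpha)\times\mathbb{C}$. I expect this block, and first-countability in particular, to be the main obstacle, since attaining first-countability \emph{simultaneously} with initial $\omega_1$-compactness is exactly what the $\Delta$-function apparatus of \cite{JKS2009} is built to achieve.

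For the ``Consequently'' clause I would argue directly from (1)--(4). Non-compactness is immediate: $\{U(\alpha,\mathbb{C}):\alpha<\omega_2\}$ is an open cover, because $(\alpha,r)\in U(\alpha,\mathbb{C})$ by condition~(1) of Definition~\ref{suitable}, yet each member lies in $(\alpha+1)\times\mathbb{C}$, so any finite subfamily omits every $(\gamma,r)$ with $\gamma$ past the finitely many indices involved. Initial $\omega_1$-compactness I would reduce to the assertion that every subset of size at most $\omega_1$ has a complete accumulation point. For $A$ of size $\omega_1$, property (3) places $\omega_1$-many of its points inside the compact $U(\lambda,\mathbb{C})$; as every infinite subset of a compact space has a complete accumulation point, the resulting point, meeting $A$ in $\omega_1=|A|$ points in each neighborhood, is a complete accumulation point of $A$. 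For countably infinite $A$, property (4) gives either $\overline{A}$ compact, in which case $A$ has an accumulation point inside it, or $\overline{A}\supsetneq A$ via a final segment; in either case a point of $\overline{A}\setminus A$ meets $A$ infinitely often in each neighborhood (using $T_1$), hence is a complete accumulation point.

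Finally, normality I would establish following \cite{JKS2009}, the essential input again being the concentration (3) and the closure dichotomy (4): these limit how two disjoint closed sets can interact across the tower of copies, so that one may separate them using the $0$-dimensionality and local compactness already secured, a bounded closed set being separated from a disjoint closed set inside a compact clopen region by regularity. The delicate point here, as with (2)--(4), is to verify that the separation only ever invokes the countably generated local structure guaranteed by the genericity, so that no set-theoretic hypothesis beyond what produces the $\omega_2$-suitable pair enters the argument.
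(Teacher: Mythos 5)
Your proposal matches the paper's treatment: the paper gives no proof of this proposition at all, presenting it as a minor rephrasing of \cite{JKS2009}*{4.2} (with normality coming from \cite{JKS2009}*{\S8}), and you likewise defer the substantive items (2)--(4) to that source. The routine parts you do argue --- $0$-dimensionality from the compact open subbase, the cardinality count, non-compactness from the cover $\{U(\alpha,\mathbb{C}):\alpha<\omega_2\}$, and initial $\omega_1$-compactness via complete accumulation points using (3) and (4) --- are correct and consistent with how these facts are used later in the paper.
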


Finally, we need the following strengthening
of \cite{JKS2009}*{Lemma 7.1} but which is actually proven.

\begin{proposition} If $p= (a_p,h_p,i_p,n_p)\in P_f$ and
  $a_p\subset\lambda\in \omega_2$, then there is a $q < p$ in $P_f$
  such\label{strongextend},
  that
  \begin{enumerate}
\item  $a_q = a_p\cup \{\lambda\}$ and $n_q=n_p$,
\item  $a_p\subset h_q(\lambda,0)$,
\item $i_q(\alpha,\lambda,j)=\emptyset$ for all $\alpha\in a_p$ and
  $j<n_q$.
  \end{enumerate}
\end{proposition}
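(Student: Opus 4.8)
The plan is to build $q$ by adjoining $\lambda$ as a new \emph{top point} above the finite support $a_p$ and decoupling its neighborhoods from those of the old points. Concretely, I would set $a_q=a_p\cup\{\lambda\}$ and $n_q=n_p$, and let $h_q,i_q$ agree with $h_p,i_p$ wherever the latter are defined. Since $\lambda\notin a_p$, every new pair in $[a_q]^2$ has the form $\{\alpha,\lambda\}$ with $\alpha\in a_p$ (so $\alpha<\lambda$), and hence the only data left to specify are the sets $h_q(\lambda,j)$ for $j<n_q$ together with the values $i_q(\alpha,\lambda,j)$ for $\alpha\in a_p$, $j<n_q$. I would simply declare $i_q(\alpha,\lambda,j)=\emptyset$ for all of these, which is exactly clause (3) of the conclusion and reduces the whole verification to making the relevant $*$-products vanish.

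For the sets attached to $\lambda$ I would take $h_q(\lambda,0)$ as large as allowed and the higher $h_q(\lambda,j)$ as small as possible: put $h_q(\lambda,0)=a_p\cup\{\lambda\}$ and $h_q(\lambda,j)=\{\lambda\}$ for $1\le j<n_q$. The first equation is precisely clause (2) of the conclusion, and it keeps $h_q(\lambda,0)\subset\lambda+1$ because $a_p\subset\lambda$. Condition (1) of Definition~\ref{suitable} for the new point is then immediate from $\lambda\in\{\lambda\}=h_q(\lambda,j)\subset a_p\cup\{\lambda\}=h_q(\lambda,0)\subset\lambda+1$, and condition (2) of Definition~\ref{suitable} holds trivially since the new values of $i_q$ are empty.

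It remains to check condition (3) of Definition~\ref{suitable} for the new pairs, which (as $i_q(\alpha,\lambda,j)=\emptyset$) amounts to $h_q(\alpha,0)*h_q(\lambda,j)=\emptyset$ for every $\alpha\in a_p$ and $j<n_q$. For $j=0$ we have $\alpha\in h_q(\lambda,0)$, so the operation is $h_q(\alpha,0)\setminus h_q(\lambda,0)=h_p(\alpha,0)\setminus(a_p\cup\{\lambda\})$, which is empty because $h_p(\alpha,0)\subseteq a_p$. For $1\le j<n_q$ we have $\alpha\neq\lambda$, hence $\alpha\notin\{\lambda\}=h_q(\lambda,j)$, so the operation is $h_q(\alpha,0)\cap h_q(\lambda,j)=h_p(\alpha,0)\cap\{\lambda\}=\emptyset$, again because $h_p(\alpha,0)\subseteq a_p\subseteq\lambda$. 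All suitability conditions internal to $a_p$ are inherited from $p$, since $h_q,i_q$ were left unchanged there; hence $q\in P_f$, and $q<p$ because $a_q\supsetneq a_p$ while $n_q=n_p$ and the new data extend the old.

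I do not expect a genuine obstacle: the argument goes through precisely because $\lambda$ sits above the entire support, so the constraint $i(\alpha,\lambda,j)\in[\alpha]^{<\aleph_0}$ never forces anything nontrivial and the sets $h_q(\alpha,0)$ and $h_q(\lambda,j)$ can be arranged by hand either to nest (at level $0$) or to be disjoint (at the higher levels). The only delicate point is keeping the two branches of the $*$-operation aligned, which is why $h_q(\lambda,0)$ is taken to contain $a_p$---this forces the nesting $U(\alpha,\mathbb{C})\subset U(\lambda,\mathbb{C})$ in the generic extension, matching the remark in Proposition~\ref{giveH}---while the sets $h_q(\lambda,j)$ for $j\ge1$ are shrunk to $\{\lambda\}$ to force disjointness.
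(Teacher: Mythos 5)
Your proof is correct and coincides with the argument the paper is implicitly invoking: the paper gives no proof of Proposition \ref{strongextend} of its own, deferring to the proof of Lemma 7.1 in \cite{JKS2009} (which, as the paper notes, actually establishes this stronger statement), and that proof extends $p$ exactly as you do --- adjoin $\lambda$ as a top point, set $h_q(\lambda,0)=a_p\cup\{\lambda\}$, $h_q(\lambda,j)=\{\lambda\}$ for $1\le j<n_q$, declare all new $i$-values empty, and verify that the corresponding $*$-products vanish (nesting at level $0$, disjointness at levels $j\ge 1$). One small point worth noting: the poset $P_f$ in \cite{JKS2009} additionally requires $i_q(\alpha,\lambda,j)\subseteq f(\alpha,\lambda)$ for the strong $\Delta$-function $f$ (a constraint suppressed in this paper's summary of $P_f$), and your choice $i_q(\alpha,\lambda,j)=\emptyset$ satisfies it trivially, so your verification that $q\in P_f$ and $q<p$ goes through unchanged.
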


We note that for $p,q$ as in Lemma \ref{strongextend},
if $q$ is in the generic filter $G$, then
 $U(\alpha, \mathbb{C})$ is a subset of $U(\lambda,\mathbb{C})$ for
all
$\alpha\in a_p$.  One consequence of this is that the family
 $\{ U(\alpha,\mathbb{C}) : \alpha\in \omega_2\}$ is finitely upwards
directed. Equivalently, the family of complements of these sets in the
one-point compactification of $X_H$ is a neighborhood base for the
point at infinity.

Now we strengthen  \cite{JKS2009}*{Lemma 7.2} which will be used to
prove that the one-point compactification of $X_H$ has Calibre
$\omega_1$.
Some of our proofs will
  require forcing arguments and we refer the reader to \cite{Kunen1980}
  for more details. However some remarks may be sufficient
  to assist many readers.
The forcing extension,
  $V[G_Q]$ by a  $Q$-generic filter $G_Q$ for a poset $Q$ is
equal to the valuation, $\val_{G_Q}(\dot A)$
 for  the collection of all $Q$-names $\dot A$ that are
 sets from $V$.  The notation $q\Vdash x\in \dot A$ can be read
 as the assertion that $x\in \val_{G_Q}(\dot A)$ for any
 generic filter with $q\in G_Q$.
 The forcing theorem (\cite{Kunen1980}*{VII 3.6})
 ensures, for example, that if $\dot A$ is a $Q$-name of a subset of a ground
 model set $B$, then $b$ is an element of $\val_{G_Q}(\dot A)$ exactly
 when there is an element $q\in G_Q$ such that
  $q\Vdash b\in \val_{G_Q}(\dot A)$.
 Additionally,
 the set of $q\in Q$ that satisfy that $q\Vdash x\in \dot A$ is a set
 in the ground model, as is the set of $x$ for which there exists
 a $q$ with $q\Vdash x\in \dot A$.  This justifies the first line
 of the next proof.

\begin{lemma} In $V[G]$, for each uncountable $A\subset\omega_2$,
  there is  a $\lambda<\omega_2$ such that $U(\alpha,\mathbb{C})
  \subset U(\lambda,\mathbb{C})$ for uncountably many $\alpha\in A$.
\end{lemma}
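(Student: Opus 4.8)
The plan is to argue by contradiction inside the forcing, pulling the set $A$ back to the ground model $V$ using that $P_f$ is ccc (Proposition~\ref{giveH}). Suppose the lemma fails, so there are a condition $r^*$ and a $P_f$-name $\dot A$ with $r^*\Vdash$ ``$\dot A$ is an uncountable subset of $\check\omega_2$ and, for every $\lambda<\omega_2$, the set $\{\alpha\in\dot A : U(\alpha,\mathbb C)\subseteq U(\lambda,\mathbb C)\}$ is countable''. I will produce a single $\lambda<\omega_2$ and a condition $s\leq r^*$ forcing that this very set is uncountable, which is the desired contradiction. The first reduction uses the remark preceding the lemma: the set $A_0=\{\alpha\in\omega_2 : (\exists q\leq r^*)\ q\Vdash\alpha\in\dot A\}$ belongs to $V$, and since $r^*\Vdash A\subseteq A_0$ and $P_f$ is ccc, $A_0$ is uncountable in $V$. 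For each $\alpha\in A_0$ fix (in $V$) a condition $p_\alpha\leq r^*$ with $p_\alpha\Vdash\alpha\in\dot A$; by the routine density of the sets $\{p : \alpha\in a_p\}$ I may also assume $\alpha\in a_{p_\alpha}$.

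Next I would extract a coherent family. Choose any $A_1\subseteq A_0$ with $|A_1|=\aleph_1$. Applying the $\Delta$-system lemma together with a pigeonhole refinement over the finitely many isomorphism types, I shrink $A_1$ (keeping it uncountable) so that the supports $\{a_{p_\alpha}:\alpha\in A_1\}$ form a $\Delta$-system with root $R$, the integers $n_{p_\alpha}$ are constant, the order isomorphism between any two $a_{p_\alpha}$ fixing $R$ pointwise carries $h_{p_\alpha},i_{p_\alpha}$ onto $h_{p_{\alpha'}},i_{p_{\alpha'}}$, and the position of $\alpha$ inside $a_{p_\alpha}$ is the same for all $\alpha$. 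Because $|A_1|=\aleph_1<\cf(\omega_2)=\omega_2$, the set $\bigcup_{\alpha\in A_1}a_{p_\alpha}$ is bounded, so I can pick $\lambda<\omega_2$ with $a_{p_\alpha}\subseteq\lambda$ for every $\alpha\in A_1$. Applying Proposition~\ref{strongextend} to each $p_\alpha$ with this common $\lambda$ yields $q_\alpha\leq p_\alpha$ with $a_{q_\alpha}=a_{p_\alpha}\cup\{\lambda\}$, $a_{p_\alpha}\subseteq h_{q_\alpha}(\lambda,0)$, and $i_{q_\alpha}(\cdot,\lambda,j)=\emptyset$; by the note following Proposition~\ref{strongextend}, $q_\alpha\Vdash U(\beta,\mathbb C)\subseteq U(\lambda,\mathbb C)$ for all $\beta\in a_{p_\alpha}$, in particular $q_\alpha\Vdash\alpha\in\dot A\wedge U(\alpha,\mathbb C)\subseteq U(\lambda,\mathbb C)$. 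The refinement guarantees that the $q_\alpha$ stay pairwise isomorphic over the common root $R\cup\{\lambda\}$ (agreeing there) and that their free parts $a_{p_\alpha}\setminus R$ are pairwise disjoint.

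Now fix one index $\alpha_0\in A_1$, set $s=q_{\alpha_0}\leq r^*$, and enumerate $A_1=\{\alpha_\eta:\eta<\omega_1\}$. For each $\xi<\omega_1$ let $D_\xi=\{r\leq q_{\alpha_0} : (\exists\eta>\xi)\ r\leq q_{\alpha_\eta}\}$. I claim each $D_\xi$ is dense below $q_{\alpha_0}$: given $r\leq q_{\alpha_0}$ with finite support $a_r$, all but finitely many free parts $a_{p_{\alpha_\eta}}\setminus R$ are disjoint from $a_r$, so I fix $\eta>\xi$ whose free part misses $a_r$; then the overlap of $a_r$ and $a_{q_{\alpha_\eta}}$ lies in $R\cup\{\lambda\}$, where $q_{\alpha_\eta}$ agrees with $q_{\alpha_0}\geq r$, so $r$ and $q_{\alpha_\eta}$ agree on their overlap, and I amalgamate them into a common extension $t\in D_\xi$. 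Granting the claim, any generic $G$ containing $q_{\alpha_0}$ meets every $D_\xi$, so $\{\eta<\omega_1 : q_{\alpha_\eta}\in G\}$ is unbounded in $\omega_1$ and hence uncountable; since each such $q_{\alpha_\eta}$ forces $\alpha_\eta\in\dot A$ and $U(\alpha_\eta,\mathbb C)\subseteq U(\lambda,\mathbb C)$, I conclude $q_{\alpha_0}\Vdash$ ``$\{\alpha\in\dot A : U(\alpha,\mathbb C)\subseteq U(\lambda,\mathbb C)\}$ is uncountable''. As $q_{\alpha_0}\leq r^*$, this contradicts the choice of $r^*$ and finishes the proof.

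The main obstacle is the amalgamation step, that is, verifying that $r$ and $q_{\alpha_\eta}$ (coherent over $R\cup\{\lambda\}$, with disjoint free parts, and agreeing on their overlap) admit a common extension $t\in P_f$. This is exactly the point where the detailed combinatorics of $\omega_2$-suitability from \cite{JKS2009} must be invoked: one sets $a_t=a_r\cup a_{q_{\alpha_\eta}}$ and must define $h_t,i_t$ on the new cross-pairs so that conditions (1)--(3) of Definition~\ref{suitable} survive, using the disjointness of the free parts and the isomorphism of the two sides over the root to route the values of $i_t$ into already-present supports. This is precisely the amalgamation that underlies the ccc-ness asserted in Proposition~\ref{giveH}, so I expect the needed lemma to follow from (a mild adaptation of) the compatibility arguments in \cite{JKS2009}; confirming that the strengthened extensions $q_\alpha$ produced by Proposition~\ref{strongextend} remain amalgamable in this uniform way is the only genuinely delicate point.
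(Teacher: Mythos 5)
Your first two-thirds matches the paper's proof almost exactly: you pull $\dot A$ back to the ground-model set $A_0$ (the paper's $I$) of ordinals that some extension of the given condition forces into $\dot A$, use ccc to see it is uncountable, choose $\lambda<\omega_2$ above the relevant supports, and apply Proposition~\ref{strongextend} to get $q_\alpha\leq p_\alpha$ forcing both $\alpha\in\dot A$ and $U(\alpha,\mathbb{C})\subseteq U(\lambda,\mathbb{C})$. The genuine gap is in your last step, where you try to show that the \emph{specific} condition $q_{\alpha_0}$ forces uncountably many $q_{\alpha_\eta}$ into the generic filter by proving each $D_\xi$ is dense below $q_{\alpha_0}$. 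That density claim asserts: for every $r\leq q_{\alpha_0}$ and every $\xi<\omega_1$ there is $\eta>\xi$ with $r$ compatible with $q_{\alpha_\eta}$. This is a precalibre-type property localized below $q_{\alpha_0}$, strictly stronger than ccc, and it is not what the compatibility analysis of \cite{JKS2009} provides. In that construction, two conditions are \emph{not} amalgamable merely because they are isomorphic over a $\Delta$-system root with disjoint free parts: clause (3) of Definition~\ref{suitable} forces the values $i_t(\gamma,\delta,n)$ on cross-pairs to route $H(\gamma,0)*H(\delta,n)$ through sets indexed below $\min(\gamma,\delta)$, and this is achievable only when the two supports are well-placed with respect to the strong $\Delta$-function $f$. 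The defining property of $f$ is Ramsey-type: every uncountable family of finite sets contains \emph{some} well-placed pair. It says nothing about a fixed $r$ — whose support $a_r$ is an arbitrary finite set with no relation to your refined $\Delta$-system — versus the tail $\{q_{\alpha_\eta}:\eta>\xi\}$. So your amalgamation step may have no witness at all, and the stronger conclusion you aim at (that $q_{\alpha_0}$ itself does the forcing) is unjustified. (A secondary wrinkle: your isomorphism refinement does not even give literal agreement of the $q_{\alpha_\eta}$ on $R\cup\{\lambda\}$, since $h$-values at root coordinates need not be subsets of the root.)

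The repair is exactly what the paper does, following the proof of \cite{JKS2009}*{Lemma 7.2}: drop the density argument and invoke the standard abstract fact that in a ccc poset, given any family $\{q_\alpha:\alpha<\omega_1\}$ of conditions, \emph{some} condition $q$ forces $\{\alpha: q_\alpha\in G\}$ to be uncountable. (If not, the trivial condition forces this set to be a countable, hence bounded, subset of $\omega_1$, since ccc preserves $\omega_1$; a countable maximal antichain of conditions deciding bounds on its supremum yields a single $\beta^\ast<\omega_1$ with $\sup$ forced $\leq\beta^\ast$, and then any $q_\gamma$ with $\gamma>\beta^\ast$ forces $\gamma$ into the set — a contradiction.) This produces the required $q$ abstractly, with no amalgamation against the $q_\alpha$'s whatsoever, and the note after Proposition~\ref{strongextend} then finishes the argument exactly as in your final sentence. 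With that substitution your proof collapses to the paper's proof, and your $\Delta$-system refinement becomes unnecessary.
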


\begin{proof}
   Let $\dot A$ be a
  $P_f$-name for a subset of $\omega_2$.
  Fix any condition $p\in G$ and assume
  that $p$ forces that $\dot A$ has cardinality $\aleph_1$.
  We prove that there is a $q<p$ and a $\lambda\in a_q$
satisfying that if $q\in G$
then  there are uncountably many $\alpha\in \val_G(\dot A)$
such that $U(\alpha,\mathbb{C})\subset
U(\lambda,\mathbb{C})$. It is a standard fact of forcing that this
would then establish the Lemma (i.e. that there is then necessarily
such a $q\in G$).

Let $I$ denote the set of $\alpha\in \omega_2$ satisfying that there
is some $p_\alpha < p$ (which we choose) forcing that
$\alpha\in \dot
A$. Since $p$ forces that $\dot A$ is a subset of $I$ it follows
that $I$ has cardinality at least $\omega_1$. Since $P_f$ is ccc, it
also follows that $I$ has cardinality equal to $\omega_1$ but it
suffices for this argument
to choose any $\lambda\in \omega_2$ such that $I\cap \lambda$ is
uncountable. For each $\alpha\in I$, choose $q_\alpha < p_\alpha$ so
that $a_{q_\alpha}=a_p\cup \{\lambda\}$ and the properties of the
pair $p_\alpha,q_\alpha$ are as stated in Proposition \ref{strongextend}.

Just as in the proof of \cite{JKS2009}*{Lemma 7.2}, the fact that
$P_f$ is ccc ensures that there is some $q<p$ such that
so long as $q\in G$, the set $\{ \alpha \in I\cap \lambda
: q_\alpha\in G\}$ is uncountable. As remarked after Proposition
\ref{strongextend}, it  follows, in $V[G]$,
 that
 $U(\alpha,\mathbb{C})\subset U(\lambda,\mathbb{C})$
 for all $\alpha \in \{\alpha\in I \cap \lambda : q_\alpha\in G\}$.
\end{proof}

\begin{theorem} If $G$ is a $P_f$-generic filter, then in
  $V[G]$, the one-point compactification of the space $X_H$
  has a $P$-base for a poset with Calibre $\omega_1$.
\end{theorem}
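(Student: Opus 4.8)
The plan is to show that the only point where first countability can fail is the point at infinity, and to build the poset entirely from the neighborhood structure there. Write $X_H^\ast = X_H \cup \{\infty\}$ for the one-point compactification. Since $X_H$ is first-countable by Proposition \ref{mainProp} and is open in $X_H^\ast$, every point of $X_H$ has a countable neighborhood base in $X_H^\ast$, so $\mathcal T_x(X_H^\ast) \leq_T \omega$ for each $x \in X_H$. At $\infty$ the open neighborhoods are exactly the sets $\{\infty\} \cup (X_H \setminus K)$ with $K \subset X_H$ compact; since each $U(\alpha,\mathbb C)$ is compact open and these sets cover $X_H$, every compact $K$ lies in some $U[F]$ with $F \in [\omega_2]^{<\aleph_0}$. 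Hence, writing $\mathbb U = \{U[F] : F \in [\omega_2]^{<\aleph_0}\}$ ordered by inclusion (a directed set, as $U[F] \cup U[F'] = U[F \cup F']$), the neighborhood filter at infinity satisfies $\mathcal T_\infty(X_H^\ast) =_T \mathbb U$.

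I would then set $P = \omega \times \mathbb U$. The factor $\omega$ guarantees $P \geq_T \omega$, so $\mathcal T_x(X_H^\ast) \leq_T \omega \leq_T P$ at every point of $X_H$, while $\mathbb U \leq_T P$ gives $\mathcal T_\infty(X_H^\ast) =_T \mathbb U \leq_T P$; by the Tukey characterization of $P$-bases recorded in the introduction, this produces a $P$-base for $X_H^\ast$. It then remains only to check that $P$ has Calibre $\omega_1$. A product with $\omega$ preserves Calibre $\omega_1$ (given $\omega_1$ conditions, uncountably many agree on the $\omega$-coordinate, whereupon one applies the calibre of the other factor), so the whole problem reduces to showing that $\mathbb U$ has Calibre $\omega_1$.

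This last step is the heart of the argument and the place where the preceding Lemma is used. Given $\{U[F_\xi] : \xi < \omega_1\} \subset \mathbb U$, I would first apply the $\Delta$-system lemma to the finite sets $F_\xi$, passing to an uncountable subfamily forming a $\Delta$-system with root $R$ and pairwise disjoint petals of a fixed size $m$, enumerated increasingly as $F_\xi \setminus R = \{a^1_\xi < \dots < a^m_\xi\}$ (the case $m=0$ being trivial). Because the petals are pairwise disjoint, for each fixed $j$ the set $\{a^j_\xi : \xi\}$ is uncountable, so the Lemma yields a $\lambda_j < \omega_2$ with $U(a^j_\xi,\mathbb C) \subset U(\lambda_j,\mathbb C)$ for uncountably many $\xi$; applying this successively for $j = 1,\dots,m$ and refining the index set at each step leaves an uncountable set of $\xi$ for which $U[F_\xi] \subset U[R \cup \{\lambda_1,\dots,\lambda_m\}]$. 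This single element of $\mathbb U$ is the required upper bound. The main obstacle is precisely this reduction: the Lemma bounds a single basic set $U(\alpha,\mathbb C)$, whereas the neighborhood filter at infinity is indexed by the finite unions $U[F]$, and it is the combination of the $\Delta$-system lemma with the iterated application of the Lemma that bridges that gap.
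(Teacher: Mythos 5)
Your proposal is correct, but it takes a genuinely different route from the paper's, and the differences all stem from your choice of poset. The paper takes $P$ to be the family $\{U(\alpha,\mathbb{C}) : \alpha\in\omega_2\}$ itself, ordered by inclusion: by the remark following Proposition \ref{strongextend}, genericity makes this family finitely upwards directed, so the complements of its members already form a neighborhood base at the point at infinity, and Calibre $\omega_1$ of $P$ is then essentially the verbatim statement of the preceding Lemma (a bounded uncountable subfamily is exactly what the Lemma produces), with no further combinatorics. You instead force directedness by closing under finite unions, $\mathbb{U}=\{U[F] : F\in[\omega_2]^{<\aleph_0}\}$, which costs you the $\Delta$-system argument and the $m$-fold iteration of the Lemma to recover Calibre $\omega_1$; that reduction is carried out correctly (disjoint petals make each $\xi\mapsto a^j_\xi$ injective, so the Lemma applies coordinatewise with successive refinement, and a bounded uncountable subfamily suffices for Calibre $\omega_1$). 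The remaining difference is how the two arguments handle the first-countable points of $X_H$, where one needs $\omega\leq_T P$: the paper proves its $P$ has a countable unbounded subset, using that $X_H$ is countably compact and non-compact by Proposition \ref{mainProp}, hence not $\sigma$-compact, whereas you get this for free from the factor $\omega$ in $P=\omega\times\mathbb{U}$ (and your pigeonhole argument that this product preserves Calibre $\omega_1$ is fine). The trade-off: the paper's proof is shorter and yields the more natural poset, but leans on the directedness remark and the non-$\sigma$-compactness observation; yours is self-contained modulo the Lemma and first countability of $X_H$, never needs the single-set family to be directed, and isolates a reusable template --- any cover by compact open sets in which uncountably many members are absorbed by a single member yields, after closing under finite unions and multiplying by $\omega$, a Calibre $\omega_1$ poset base at infinity.
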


\begin{proof}
  The poset $P$ consists of the family $\{ U(\alpha,\mathbb{C}) :
  \alpha\in \omega_2\}$ ordered by inclusion. To complete
  the proof we have to note that $\omega <_T P$. For this it is enough
  to prove that there is a countable subset of $P$ that has no upper
  bound. It is relatively easy to prove that $X_H$ is separable
  (indeed, that $\omega\times \mathbb{C}$ is dense) but oddly enough
  this is not stated in \cite{JKS2009} and we can more easily
  simply note that $X_H$ is not $\sigma$-compact because
  by Proposition \ref{mainProp}  it
 is countably compact and non-compact.
\end{proof}

An important feature of the construction of $X_H$ from the
$\omega_2$-suitable pair $H,i$ is that even in a forcing
extension by a ccc poset $Q$ (in fact by
any poset that preserves
that $\omega_1$ and $\omega_2$ are cardinals), the new
interpretation of the space obtained using
$H,i$ (i.e. the base set $\omega_2\times \mathbb{C}$ may change
because
there can be new elements of $\mathbb{C}$) is still
locally compact and $0$-dimensional. This is similar to the fact
that local compactness of scattered spaces is preserved by any forcing
(a result by Kunen).
The other properties
of $X_H$, such as first-countability and initial
$\omega_1$-compactness, as well as properties
of its one-point extension are not immediate and
will depend on what subsets of $\omega_2$ have been added.
\bigskip

An unexpected feature of the $\omega_2$-suitable pair is that, in
fact,
the first countability of $X_H$ is preserved by any forcing.

\begin{lemma} For each poset $Q$ in $V[G]$ and\label{character}
 $Q$-generic filter
  $G_Q$, the space $X_H$ is first-countable in $V[G][G_Q]$.
\end{lemma}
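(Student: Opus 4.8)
The plan is to reduce first-countability of $X_H$ to a purely combinatorial statement about the fixed pair $H,i$ that is absolute between $V[G]$ and $V[G][G_Q]$. The point is that $H$ and $i$ are functions with values in $\mathcal P(\omega_2)$ and $[\omega_2]^{<\aleph_0}$, so they are unchanged by $Q$; the only thing that changes is that $\mathbb{C}=2^{\mathbb{N}}$ acquires new reals, so the new points of the space are exactly the pairs $(\alpha,r)$ with $\alpha<\omega_2$ and $r\in\mathbb{C}^{V[G][G_Q]}$. By Proposition~\ref{localbase}(2), a local base at such a point consists of the finite intersections of the sets $U(\alpha,[n,r(n)])\setminus U[F]$ with $n\in\mathbb{N}$ and $F\in[\alpha]^{<\aleph_0}$; writing $V_S=\bigcap_{n\in S}U(\alpha,[n,r(n)])$ for finite $S\subseteq\mathbb{N}$, these intersections are exactly the sets $V_S\setminus U[F]$.

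First I would record the ``all or nothing'' behaviour of these neighbourhoods off the fibre $\{\alpha\}\times\mathbb{C}$. For $\beta\neq\alpha$ one checks directly from Definition~\ref{suitable} that $U(\alpha,[n,1])$ meets $\{\beta\}\times\mathbb{C}$ in the whole fibre when $\beta\in H(\alpha,n)$ and in the empty set otherwise, and similarly $U(\alpha,[n,0])$ meets it in the whole fibre exactly when $\beta\in H(\alpha,0)\setminus H(\alpha,n)$. Since $F\subseteq\alpha$ and $H(\xi,0)\subseteq\xi+1$ force $\alpha\notin H(\xi,0)$ for $\xi\in F$, the set $U[F]$ is disjoint from $\{\alpha\}\times\mathbb{C}$ and also consists of whole fibres. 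Consequently $V_S\setminus U[F]$ decomposes as a basic clopen subset of $\{\alpha\}\times\mathbb{C}$ (the one determined by the finite condition $r\restriction S$, of which there are only countably many, all absolute) together with the union of the fibres $\{\beta\}\times\mathbb{C}$ for $\beta$ ranging over $B_{S,\sigma}\setminus\bigcup_{\xi\in F}H(\xi,0)$, where $\sigma=r\restriction S$ and $B_{S,\sigma}=\bigcap_{n\in S}C_n$ with $C_n=H(\alpha,n)$ if $\sigma(n)=1$ and $C_n=H(\alpha,0)\setminus H(\alpha,n)$ if $\sigma(n)=0$. Crucially, each cell $B_{S,\sigma}\subseteq\omega_2$ depends only on $\alpha$ and the finite data $(S,\sigma)$, never on $r$ itself.

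Next I would reduce first-countability at $(\alpha,r)$ to countable cofinality of these cells. Using the decomposition, $\{V_S\setminus U[F]:S\in[\mathbb{N}]^{<\aleph_0},\ F\in\mathcal F_\alpha\}$ is a local base at $(\alpha,r)$ as soon as the countable family $\mathcal F_\alpha\subseteq[\alpha]^{<\aleph_0}$ is cofinal, under $\subseteq$, in each family $\{B_{S,\sigma}\cap\bigcup_{\xi\in F}H(\xi,0):F\in[\alpha]^{<\aleph_0}\}$: given an arbitrary $V_{S}\setminus U[F_0]$ one keeps $S$ and chooses $F_1\in\mathcal F_\alpha$ with $B_{S,r\restriction S}\cap\bigcup_{\xi\in F_0}H(\xi,0)\subseteq\bigcup_{\xi\in F_1}H(\xi,0)$, which (since $U[F]$ misses the $\alpha$-fibre) gives $V_S\setminus U[F_1]\subseteq V_S\setminus U[F_0]$. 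There are only countably many pairs $(S,\sigma)$, so a single countable $\mathcal F_\alpha$ can be cofinal for all of them at once. Now every finite pattern $\sigma\colon S\to 2$ is realised by a ground-model real (extend $\sigma$ by zeros), and $X_H$ is first-countable in $V[G]$ by Proposition~\ref{mainProp}(2); applying first-countability at the ground-model points $(\alpha,r_\sigma)$ yields such a countable cofinal $\mathcal F_\alpha$ in $V[G]$ for every $\alpha<\omega_2$.

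Finally I would invoke absoluteness. The statement ``$\mathcal F_\alpha$ is cofinal in each $\{B_{S,\sigma}\cap\bigcup_{\xi\in F}H(\xi,0):F\}$'' quantifies only over $[\alpha]^{<\aleph_0}$ — a set that forcing does not enlarge, since finite sets are absolute — and refers only to inclusions among the fixed subsets $H(\xi,0),H(\alpha,n)$ of $\omega_2$ and to the set $\mathcal F_\alpha$, which remains countable in $V[G][G_Q]$. Hence the cofinality statement passes verbatim from $V[G]$ to $V[G][G_Q]$. Since any new real $r\in\mathbb{C}^{V[G][G_Q]}$ restricts on each finite $S$ to a pattern $r\restriction S$ that already existed in $V[G]$, the very same $\mathcal F_\alpha$ witnesses that $(\alpha,r)$ has countable character; as every point of $X_H$ in $V[G][G_Q]$ has this form, $X_H$ is first-countable there. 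I expect the main obstacle to be the verification in the second and third paragraphs — isolating the cells $B_{S,\sigma}$ and checking that countably many finite sets $F$ generate the local base — rather than the absoluteness step; once the neighbourhood structure has been stripped of all dependence on $r$ beyond finite patterns, robustness under an arbitrary $Q$ (even one collapsing cardinals) is automatic, because the reduced statement never mentions cardinalities, only finite subsets of $\alpha$ and inclusions of fixed subsets of $\omega_2$.
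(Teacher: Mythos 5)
Your first two paragraphs and the final absoluteness paragraph are fine: the all-or-nothing fibre decomposition, the cells $B_{S,\sigma}$, and the fact that a countable $\mathcal F_\alpha$ with your cofinality property would give a local base at every $(\alpha,r)$ (old or new) are all correct, and a statement quantifying only over finite subsets of $\alpha$ and fixed subsets of $\omega_2$ would indeed pass up to $V[G][G_Q]$. The gap is the third step, where you claim that first-countability of $X_H$ in $V[G]$ (Proposition \ref{mainProp}) ``yields such a countable cofinal $\mathcal F_\alpha$.'' It does not. A countable local base at the ground-model point $(\alpha,r_\sigma)$ consists of sets $V_{S_i}\setminus U[F_i]$ in which $S_i$ is in general strictly larger than $S$; the inclusion $V_{S_i}\setminus U[F_i]\subseteq V_S\setminus U[F_0]$, read on whole fibres, gives only
\[
B_{S_i,\,r_\sigma\restriction S_i}\cap\textstyle\bigcup_{\xi\in F_0}H(\xi,0)\ \subseteq\ \bigcup_{\xi\in F_i}H(\xi,0),
\]
i.e.\ control over the \emph{refined, smaller} cell $B_{S_i,r_\sigma\restriction S_i}\subseteq B_{S,\sigma}$, and says nothing about the part of $B_{S,\sigma}\cap\bigcup_{\xi\in F_0}H(\xi,0)$ outside that refined cell. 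In fact your condition is strictly stronger than first-countability: taking $F_0=\{\xi\}$ for each $\xi\in H(\alpha,0)\setminus\{\alpha\}$ (note $\xi\in H(\xi,0)$), it forces $H(\alpha,0)\setminus\{\alpha\}\subseteq\bigcup\{\bigcup_{\xi\in F_1}H(\xi,0):F_1\in\mathcal F_\alpha\}$, i.e.\ that the whole fibre $\{\alpha\}\times\mathbb{C}$ is a $G_\delta$ \emph{set} in $X_H$, whereas first-countability only makes each of its points a $G_\delta$ point. These can come apart: schematically, if $H(\xi,0)=\xi+1$ for $\xi<\alpha$ with $\cf(\alpha)=\omega_1$, and the infinite cells $\bigcap_n H(\alpha,[n,r(n)])$ (where $H(\alpha,[n,1])=H(\alpha,n)$ and $H(\alpha,[n,0])=H(\alpha,0)\setminus H(\alpha,n)$) are singletons, then every point of the fibre has countable character, yet every set $\bigcup_{\xi\in F_1}H(\xi,0)$ is bounded below $\alpha$ while some finite cell $B_{S,\sigma}$ is unbounded, so no countable $\mathcal F_\alpha$ exists. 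So your key combinatorial statement cannot be extracted from first-countability; it is not even clear it holds in the Juh\'asz--Koszmider--Soukup model, and proving it would require a genericity argument over $P_f$, not a topological one.

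The repair is not cosmetic, because the obvious weakening of your condition (allowing $S$ to grow along with $F$) reintroduces dependence on the entire real $r$, and then the statement quantifies over reals of the extension and absoluteness is lost -- which is exactly the difficulty of the lemma. The paper resolves it by working with the \emph{infinite} cells $K(\alpha,r)=\bigcap_n H(\alpha,[n,r(n)])$ rather than your finite ones, and splitting into old and new reals: every $\xi\in H(\alpha,0)$ determines its own pattern $r_\xi(n)=1\iff\xi\in H(\alpha,n)$, which lies in $V[G]$ because $H$ does, so $\{K(\alpha,r):r\in\mathbb{C}\cap V[G]\}$ partitions $H(\alpha,0)$ and this persists in $V[G][G_Q]$. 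Hence for a new real $s$ the cell $K(\alpha,s)$ is empty, so $\bigcap_n U(\alpha,[n,s(n)])=\{(\alpha,s)\}$ and $(\alpha,s)$ is a $G_\delta$ point (hence of countable character by the preserved local compactness), while for old $r$ one uses the ground-model countable covers $K(\alpha,r)\subseteq\bigcup\{H(\xi,0):\xi\in F_r\}$, whose covering property is absolute. This pigeonhole on ground-model patterns, handling the new reals, is the essential idea your proposal bypasses.
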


\begin{proof}
Of course we will use the fact that,
in $V[G]$, $X_H$ is first-countable (as stated in Proposition \ref{mainProp}).
Fix any $\alpha\in \omega_2$ and recall from Proposition \ref{localbase},
that the collection of all finite intersections of the family
$$\{ U(\alpha, [n,r(n)]) \setminus U[F] : n\in \mathbb{N}, \ F\in [\alpha]^{<
\aleph_0} \}$$
is a local base at $(\alpha,r)\in \{\alpha\}\times \mathbb{C}$
 (in any model).
 In $V[G]$, for each $r\in \mathbb{C}$, let
$Z(\alpha,r) = \bigcap_{n\in\mathbb{N}}U(\alpha,[n,r(n)])$
and let $K(\alpha,r) =
\{ \xi< \alpha :
 \{\xi\}\times \mathbb{C}\subset Z(\alpha,r) \}$. Let us
 recall that $\xi\in K(\alpha,r)$ if and only
  if $Z(\alpha,r)\cap (\{\xi\}\times\mathbb{C})$ is not empty. Similarly,
  by the definition of $U(\alpha,[n,r(n)])$ given in Definition \ref{suitable},
     $K(\alpha,r) = \bigcap \{ H(\alpha,[n,r(n)]) : n\in \mathbb N\}$.
    Since, for all $n\in \mathbb{N}$,
      $\{H(\alpha,[n,0]), H(\alpha,[n,1])\}$
is a partition of $H(\alpha,0)$,       it follows that
      $\{ K(\alpha, r) : r\in \mathbb{C}\}$ is also a partition of
       $H(\alpha,0)$.  Since $X_H$ is first-countable (in $V[G]$),
       for each $r\in \mathbb{C}$, there is a
       countable $F_r\subset K(\alpha,r)$ such that
       $K(\alpha,r)\times\mathbb{C}\subset \bigcup\{ U(\xi,0) : \xi
       \in F_r\}$.

       Now we are ready to show that, in $V[G][G_Q]$,
       each point of  $\{\alpha\}\times\mathbb{C}$ is a $G_\delta$-point
        in
        $X_H$.  For each $r\in \mathbb{C}$, we again define the $G_\delta$-set
         $Z(\alpha,r)$ and
         $K(\alpha,r)\subset H(\alpha,0)$ as we did in $V[G]$ but
         as \textit{calculated\/} in the new model $V[G][G_Q]$.
         It is immediate that $Z(\alpha,r)\cap (\{\alpha\}\times\mathbb{C})$
         is equal to $(\alpha,r)$.
         Since there are no changes to the values of
          $H(\alpha,[n,\epsilon])$ for $(n,\epsilon)\in \mathbb{N}\times 2$,
           the value of $K(\alpha,r)$ for each $r\in \mathbb{C}\cap V[G]$ is
           unchanged  and the family
            $\{ K(\alpha,r) : r\in \mathbb{C}\}$ is a partition of $H(\alpha,0)$.
            It clearly remains the case that,
            for $r\in \mathbb{C}\cap V[G]$, $K(\alpha,r)\times
            \mathbb{C}$ is a subset of $\bigcup\{U(\xi,0): \xi\in F_r\}$.
            This implies that $(\alpha,r)$ is a $G_\delta$-point for
            each $r\in \mathbb{C}\cap V[G]$. Now consider a point
             $s\in \mathbb{C}$ that is not an element of $V[G]$.
But now we have that $K(\alpha,s)$ is empty since
  $H(\alpha,0)$ is covered by the family $\{ K(\alpha, r) :
     r\in \mathbb{C}\cap V[G]\}$. This implies that
       $Z_s$ is equal to the singleton set $\{ (\alpha,s)\}$.
\end{proof}

Next we prove that we can extend the model $V[G]$ to obtain a model in
which Martin's Axiom holds (and $\mathfrak c = \omega_2$). We do so
using the following result from \cite{Kunen1980}*{VI 7.1, VIII 6.3}
(i.e. the standard
method to construct a model of Martin's Axiom).

\begin{proposition} In the model $V[G]$, there is an increasing chain
   $\{ Q_\xi : \xi\leq\omega_2\}$ of partially ordered sets\label{MA}
  satisfying for each $\xi<\omega_2$
  \begin{enumerate}
  \item  $Q_\xi$ is a ccc poset of cardinality at most $\aleph_1$,
    \item each maximal antichain of $Q_\xi$ is a maximal antichain of
      $Q_{\omega_2}$,
    \item if $G_2$ is a $Q_{\omega_2}$-generic filter, then
      in the model $V[G][G_2]$
      \begin{enumerate}
      \item     Martin's Axiom holds and $\mathfrak c=\omega_2$
        \item for each $A\subset \omega_2\times\mathbb{C}$ of
          cardinality less than $\omega_2$, there is a $\xi<\omega_2$
          such that $A$ is in the model $V[G][G_2\cap Q_\xi]$.
      \end{enumerate}
        \end{enumerate}
\end{proposition}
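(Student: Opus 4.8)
The plan is to follow the standard construction of a model of Martin's Axiom by a finite-support ccc iteration of length $\omega_2$, as in \cite{Kunen1980}*{VIII 6.3}, carried out over the ground model $V[G]$, and then to verify that this construction additionally delivers the coherence property (2) and the reflection property (3b). The crucial resource from the ground model is that, by Proposition~\ref{mainProp}, $V[G]$ satisfies $2^{\aleph_1}=\mathfrak c=\aleph_2$; this is exactly what makes the usual bookkeeping counting succeed, since at each stage $\xi<\omega_2$ the poset $Q_\xi$ will have size at most $\aleph_1$, and a ccc poset of size $\le\aleph_1$ over $Q_\xi$ admits at most $\aleph_1^{\aleph_1}=2^{\aleph_1}=\aleph_2$ nice names.

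First I would build, by recursion on $\xi\le\omega_2$, a finite-support iteration $\langle Q_\xi,\dot R_\xi:\xi<\omega_2\rangle$ in which each iterand $\dot R_\xi$ is a $Q_\xi$-name for a ccc poset of size at most $\aleph_1$, the iterands being chosen according to a bookkeeping function $\pi\colon\omega_2\to\omega_2$ that enumerates, cofinally often, all such names together with indexed families of fewer than $\aleph_2$ dense sets. A routine induction gives $|Q_\xi|\le\aleph_1$ for all $\xi<\omega_2$ (a finite-support iteration of length $<\omega_2$ of posets of size $\le\aleph_1$ has size $\le\aleph_1$), which is (1). The standard $\Delta$-system argument shows that a finite-support iteration of ccc posets is ccc, so $Q_{\omega_2}$ is ccc and preserves $\omega_1$ and $\omega_2$; combined with $|Q_{\omega_2}|\le\aleph_2$ and $\aleph_2^{\aleph_0}=\aleph_2$ this forces $\mathfrak c=\aleph_2$ in $V[G][G_2]$, while the bookkeeping guarantees that every ccc poset of size $<\mathfrak c$ appearing in the extension, together with any fewer than $\aleph_2$ of its dense sets, is handled at some stage. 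This establishes (3a). For (2), I would realize $Q_\xi$ inside $Q_{\omega_2}$ as the set of conditions with support contained in $\xi$ and invoke the basic fact that in a finite-support iteration the restriction map $p\mapsto p\restriction\xi$ is a projection, so $Q_\xi$ is completely embedded in $Q_{\omega_2}$; completeness of this embedding is precisely the assertion that every maximal antichain of $Q_\xi$ remains a maximal antichain of $Q_{\omega_2}$.

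The part requiring genuine care is the reflection property (3b). Given $A\subseteq\omega_2\times\mathbb C$ with $|A|<\omega_2$ in $V[G][G_2]$, I would fix in $V[G][G_2]$ an enumeration $A=\{(\beta_\nu,s_\nu):\nu<\omega_1\}$ and thereby code $A$ by the two subsets $\{(\nu,\beta_\nu):\nu<\omega_1\}\subseteq\omega_1\times\omega_2$ and $\{(\nu,n,s_\nu(n)):\nu<\omega_1,\ n\in\omega\}\subseteq\omega_1\times\omega\times2$ of \emph{ground-model} sets. For any subset $D$ of a ground-model set with $|D|\le\aleph_1$, take a name for an enumeration $e\colon\omega_1\to D$; for each $\nu$ a maximal antichain $A_\nu\subseteq Q_{\omega_2}$ decides $e(\nu)$, and by ccc each $A_\nu$ is countable, so $\bigcup_{q\in A_\nu}\supp(q)$ is a countable, hence bounded, subset of $\omega_2$. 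Because $\omega_2$ is regular with $\cf(\omega_2)=\omega_2>\omega_1$, the supremum over the $\aleph_1$ many $\nu$ of these bounds is some $\xi<\omega_2$; then every $A_\nu$ lies in $Q_\xi$, so $e$, and hence $D$, belongs to $V[G][G_2\cap Q_\xi]$. Applying this to both codes and taking the larger index places $A$ in $V[G][G_2\cap Q_\xi]$. The only delicate point, and the reason I pass to this enumeration-based coding rather than reflecting $A$ directly, is that $\mathbb C$ acquires new points in the extension; reducing to subsets of ground-model sets sidesteps this, and everything else is the standard iteration machinery.
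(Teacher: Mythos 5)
Your architecture is the standard one, and that is appropriate: the paper itself gives no proof of this proposition, only the citation to Kunen, so a reconstruction of the standard bookkeeping iteration is exactly what is called for. Items (2), (3a) and (3b) of your write-up are correct; in particular your coding of $A$ by two subsets of \emph{ground-model} sets in (3b) is precisely the right device for coping with the new elements of $\mathbb{C}$, and the support-bounding argument via ccc-ness, finiteness of supports, and regularity of $\omega_2$ is sound. The genuine gap is at item (1), the bound $|Q_\xi|\le\aleph_1$, and it occurs exactly where the ambient hypotheses are unusual. Your parenthetical justification, ``a finite-support iteration of length $<\omega_2$ of posets of size $\le\aleph_1$ has size $\le\aleph_1$,'' fails for the iteration as you have set it up, because CH fails in $V[G]$: Proposition \ref{mainProp} gives $\mathfrak c=2^{\aleph_1}=\aleph_2$ there. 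A condition of the iteration carries, at each coordinate of its support, a $Q_\eta$-name for an element of the iterand; the nice names for an element of $\omega_1$ over a ccc poset of size $\aleph_1$ number $\aleph_1^{\aleph_0}\geq 2^{\aleph_0}=\aleph_2$, and even up to forced equality there are at least $\mathfrak c=\aleph_2$ inequivalent such names (distinct elements of the Boolean completion already yield inequivalent names for an element of $2$). So any ``full'' presentation of the iteration has cardinality $\aleph_2$ from the second stage on, and the routine induction you invoke does not go through. Indeed your own opening computation --- $\aleph_1^{\aleph_1}=2^{\aleph_1}=\aleph_2$ nice names per stage --- sits in unresolved tension with the claim that each $Q_\xi$ has size $\aleph_1$.

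The missing idea is to restrict to a dense set of \emph{determined} conditions: present each iterand as a name for a poset whose underlying set is contained in $\check\omega_1$, and admit only conditions $p$ such that $p(\eta)$ is a check name $\check\gamma$, $\gamma<\omega_1$, for each $\eta$ in the support. An easy induction on $\xi$ shows these are dense in the full finite-support iteration (given a condition, extend its restriction below the top coordinate of its support so as to decide that coordinate's value, then apply the induction hypothesis), so this subposet is ccc, forcing-equivalent to the full iteration, and the restriction/complete-embedding facts and hence your proofs of (2), (3a), (3b) survive verbatim; now the cardinality induction really is routine and gives $|Q_\xi|\le |\xi|\cdot\aleph_1=\aleph_1$ for $\xi<\omega_2$, while the bookkeeping continues to range over the $\aleph_2$ nice names for the iterands. (This is what Kunen's definition of the two-step iteration via $\dom(\dot Q)$ accomplishes silently when the iterands are presented on $\check\omega_1$.) The repair is not cosmetic: the paper later uses the $\aleph_1$ bound essentially, both in the pressing-down argument in the final theorem and in the lemma that ccc posets of cardinality at most $\aleph_1$ preserve Calibre $\omega_1$ of $P$, so a construction that only delivers $|Q_\xi|\le\aleph_2$ would not suffice.
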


 For the remainder of this section let $\{ Q_\xi : \xi\leq\omega_2\}$
 be the poset as in this Proposition and let $G_2$ be a
  $Q_{\omega_2}$-generic filter. The model
   $V[G][G_2\cap Q_\xi]$ is actually equal to the valuation
   by
    $G_2 $ of all $Q_\xi$-names that are in $V[G]$.

First we prove that the poset of $P$ (consisting of the family
 $\{ U(\alpha, \mathbb{C}) : \alpha\in\omega_2\}$ ordered by
inclusion) still has Calibre $\omega_1$ in the forcing extension
of $V[G]$ by $Q_{\omega_2}$. In fact, by Proposition \ref{MA}, it
suffices to prove that any ccc poset $Q$ of cardinality at most
$\aleph_1$ preserves that $P$ has Calibre $\omega_1$.

\begin{lemma} If $G_Q$ is $Q$-generic over $V[G]$
for a  ccc poset, then  $P$ has Calibre $\omega_1$ in the model
$V[G][G_Q]$.
\end{lemma}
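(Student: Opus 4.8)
The plan is to reduce the statement to a property that is \emph{absolute} between $V[G]$ and $V[G][G_Q]$ and then transfer the Calibre~$\omega_1$ of $P$ already proved in $V[G]$ across the ccc extension by a standard name-and-ccc reflection argument. The first step is the absoluteness observation: for $\alpha,\lambda\in\omega_2$ we have $U(\alpha,\mathbb{C})\subseteq U(\lambda,\mathbb{C})$ if and only if $H(\alpha,0)\subseteq H(\lambda,0)$, and the sets $H(\alpha,0)$ are fixed subsets of $\omega_2$ lying in $V[G]$. Although $\mathbb{C}$ acquires new points in $V[G][G_Q]$, the inclusion relation among the members of $\{U(\alpha,\mathbb{C}):\alpha\in\omega_2\}$ is therefore unchanged. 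Writing $\alpha\, R\, \lambda$ for $H(\alpha,0)\subseteq H(\lambda,0)$ and identifying $U(\alpha,\mathbb{C})$ with $\alpha$, verifying that $P$ has Calibre~$\omega_1$ in $V[G][G_Q]$ amounts to the following statement about the \emph{absolute} relation $R$: for every uncountable $A\subseteq\omega_2$ in $V[G][G_Q]$ there are $\lambda<\omega_2$ and an uncountable $A_0\subseteq A$ with $\alpha\, R\,\lambda$ for all $\alpha\in A_0$. In $V[G]$ this is exactly the content of the Lemma immediately preceding this one; the only new feature is that $A$ need not belong to $V[G]$.

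Next I would set up the name argument. Let $\dot A$ be a $Q$-name and fix a condition $q^{\ast}\in Q$ forcing $\dot A$ to be an uncountable subset of $\omega_2$. Put $I=\{\alpha<\omega_2:(\exists q\le q^{\ast})\ q\Vdash \alpha\in\dot A\}$ and, for each $\alpha\in I$, fix a witness $q_\alpha\le q^{\ast}$ with $q_\alpha\Vdash\alpha\in\dot A$; the justification recorded just before the preceding Lemma guarantees $I\in V[G]$. Since $q^{\ast}\Vdash\dot A\subseteq\check I$ while $q^{\ast}$ forces $\dot A$ uncountable, the set $I$ is uncountable in $V[G]$. Applying the preceding Lemma in $V[G]$ to $I$ yields $\lambda<\omega_2$ and an uncountable $J\subseteq I$ with $\alpha\, R\,\lambda$ for every $\alpha\in J$; passing to a subset we may assume $J=\{\alpha_\xi:\xi<\omega_1\}$.

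The heart of the matter, and the main obstacle, is to force uncountably many elements of $J$ into $\dot A$, and this is precisely where ccc is used, exactly as in the proof of the preceding Lemma. Consider the $Q$-name $\dot B=\{\xi<\omega_1:q_{\alpha_\xi}\in\dot G_Q\}$. I claim some $q\le q^{\ast}$ forces $\dot B$ uncountable. Otherwise the conditions forcing $\dot B$ countable would be dense below $q^{\ast}$; taking a maximal antichain $W$ below $q^{\ast}$ of such conditions, each $w\in W$ forces $\dot B$ bounded in $\omega_1$, and by ccc (a countable maximal antichain below $w$ deciding bounds, whose countably many values have supremum $<\omega_1$) one gets $\beta_w<\omega_1$ with $w\Vdash\dot B\subseteq\check\beta_w$. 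As $W$ is countable, $\beta^{\ast}=\sup_{w\in W}\beta_w<\omega_1$ and $q^{\ast}\Vdash\dot B\subseteq\check\beta^{\ast}$, contradicting $q_{\alpha_\xi}\Vdash\xi\in\dot B$ for $\xi>\beta^{\ast}$. Fixing such a $q$, since $\xi\in\dot B$ forces $\alpha_\xi\in\dot A$ and $\alpha_\xi\, R\,\lambda$ holds absolutely, $q$ forces that $\{\alpha\in\dot A:U(\alpha,\mathbb{C})\subseteq U(\lambda,\mathbb{C})\}$ contains the uncountable set $\{\alpha_\xi:\xi\in\dot B\}$. As $q^{\ast}$ was an arbitrary condition forcing $\dot A$ uncountable and $q\le q^{\ast}$, a density argument shows that in $V[G][G_Q]$ every uncountable $A\subseteq\omega_2$ admits such a pair $(\lambda,A_0)$, which is Calibre~$\omega_1$ for $P$.
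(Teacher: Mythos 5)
Your proof is correct and follows essentially the same route as the paper's: form the set $I$ of ordinals $\alpha$ forced into $\dot A$ by chosen witnesses $q_\alpha$, apply the preceding lemma in $V[G]$ to find $\lambda$ bounding uncountably many $U(\alpha,\mathbb{C})$, and use ccc to produce a condition forcing uncountably many of the witnesses $q_\alpha$ into the generic filter. The only difference is one of detail: you make explicit the absoluteness of the inclusion relation (via $H(\alpha,0)\subseteq H(\lambda,0)$) and write out the antichain argument behind the ccc step, both of which the paper leaves implicit or cites.
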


\begin{proof} Let $\dot A$ be a $Q$-name of a subset
  of $\omega_2$ and let $q$ be any element of $Q$.
Let $I$ be the set of $\alpha\in \omega_2$ such that there exists some
$q_\alpha < q$ such that $q_\alpha\Vdash \alpha\in \dot A$. For
each $\alpha\in I$ choose such a $q_\alpha < q$. Fix any
$\lambda<\omega_2$ so that $I_\lambda = \{ \alpha \in  I
: U(\alpha,\mathbb{C})\subset U(\lambda,\mathbb{C})\}$ is
uncountable. Choose $\bar q<q$ so that
for all $Q$-generic $G_Q$ with $\bar q\in G_Q$,
the set $\{ \alpha \in I_\lambda : q_\alpha\in G_Q\}$ is uncountable.
Since $\alpha\in\val_{G_Q}(\dot A)$ for all $\alpha\in I$
with $q_\alpha\in G_Q$, this completes the proof
that $P$ retains the Calibre $\omega_1$ property.
\end{proof}

It follows from the results so far that, in the model
 $V[G][G_2]$,  the one-point compactification of $X_H$
 has a $P$-base and that $P$ has Calibre $\omega_1$. Also,
   $\{ U(\alpha,0) : \alpha\in \omega_2\}$ is an open cover
   of $X_H$ that has no countable subcover, so the one-point
   compactification is not first-countable. This completes the proof of
   the desired properties,   but it is of independent interest
   to prove this next result because of the connection to the Moore-Mrowka problem.

   \begin{theorem} In the model $V[G][G_2]$
   there is an $\omega_2$-suitable pair $H,i$
   and a poset $P$ of Calibre $\omega_1$ such that
    each of the following hold:
   \begin{enumerate}
   \item Martin's Axiom and $\mathfrak c = \omega_2$,

   \item  the space $X_H$ is   locally compact, $0$-dimensional, first-countable,
          and not compact,
          \item
          the one-point compactification of $X_H$
     has
     a $P$-base
\item the space $X_H$ is initially $\omega_1$-compact and normal,
     \item the one-point compactification of $X_H$ is compact, has
     countable tightness,
    and is not sequential.
     \end{enumerate}
   \end{theorem}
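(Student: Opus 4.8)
The plan is to read off items (1)--(3) from the preservation results already in hand and to \emph{derive} the remaining topological properties of $X_H$ and its one-point compactification $X_H^{*}$ (with point at infinity $\infty$) in $V[G][G_2]$ from them. Throughout, $H,i$ is the $\omega_2$-suitable pair given by the $P_f$-generic $G$ and $P=\{U(\alpha,\mathbb{C}):\alpha<\omega_2\}$ is ordered by inclusion. Item (1) is Proposition~\ref{MA}(3)(a). For item (2): local compactness and $0$-dimensionality of $X_H$ are preserved by any forcing (as recorded before Lemma~\ref{character}); first countability in $V[G][G_2]$ is exactly Lemma~\ref{character}; and non-compactness is structural, since $U(\alpha,\mathbb{C})\subseteq(\alpha+1)\times\mathbb{C}$ by Definition~\ref{suitable}, so the cover $\{U(\alpha,\mathbb{C}):\alpha<\omega_2\}$ has no countable subcover in any model. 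Item (3) is the assertion recorded just before the statement: the finite upward directedness of $\{U(\alpha,\mathbb{C})\}$ together with the Theorem producing a $P$-base at $\infty$ gives the $P$-base there (and $\omega<_T P$ with first countability handles the points of $X_H$), while the Lemma showing that $P$ retains Calibre~$\omega_1$ under any ccc forcing, combined with Proposition~\ref{MA}, gives Calibre~$\omega_1$ of $P$ in $V[G][G_2]$.

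The substance is in (4) and (5). For the $\omega_1$-part of initial $\omega_1$-compactness I would transfer property (3) of Proposition~\ref{mainProp} to $V[G][G_2]$: given $A\in[X_H]^{\omega_1}$, project to $\omega_2$; if the projection is uncountable, the preserved Calibre~$\omega_1$ Lemma yields $\lambda$ with $U(\alpha,\mathbb{C})\subseteq U(\lambda,\mathbb{C})$ for uncountably many indices, while if the projection is countable some fibre meets $A$ uncountably. Either way $A\cap U(\lambda,\mathbb{C})$ is uncountable for some $\lambda$, and since $U(\lambda,\mathbb{C})$ is compact, an $\omega_1$-sized subset of it has a complete accumulation point, which is then one for $A$. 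Granting this, countable tightness of $X_H^{*}$ needs no further preservation: points of $X_H$ are first countable, and at $\infty$ it suffices, by Arhangel'skii's theorem that tightness equals the free-sequence number in a compact Hausdorff space, to exclude a free sequence $\{z_\alpha:\alpha<\omega_1\}$, which we may take inside $X_H$. The $\omega_1$-part just established gives such a set a complete accumulation point $q\in X_H$; then $q$ lies in the closure of every tail, and first countability at $q$ yields a sequence from the set converging to $q$ with indices bounded by some $\gamma<\omega_1$, placing $q$ in the closures of both the initial segment below $\gamma+1$ and the tail from $\gamma+1$, contradicting freeness. Compactness of $X_H^{*}$ is automatic.

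Countable compactness of $X_H$ and its normality, by contrast, rest on the tail-absorption dichotomy, property (4) of Proposition~\ref{mainProp}. Granting it in $V[G][G_2]$: no countably infinite closed discrete set can exist, since its closure would be neither compact nor a superset of a tail, so $X_H$ is countably compact; being also non-compact, no sequence in $X_H$ converges to $\infty$ (such a sequence escapes every compact set, yet countable compactness provides a cluster point in $X_H$), hence $X_H$ is sequentially closed but not closed in $X_H^{*}$, and $X_H^{*}$ is not sequential. For normality, in a locally compact Hausdorff space disjoint closed sets separate as soon as one is compact, so it is enough that two disjoint closed sets are not both non-compact; but a non-compact closed set, through countable tightness and property (4), must contain a tail $(\omega_2\setminus\delta)\times\mathbb{C}$, and two tails meet.

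The main obstacle is precisely the transfer of property (4) to $V[G][G_2]$. Unlike first countability (Lemma~\ref{character}) and Calibre~$\omega_1$, the dichotomy for countable sets is about \emph{closures}, and these can genuinely grow when the ccc iteration adds new points of $\mathbb{C}$ and new subsets of $\omega_2\times\mathbb{C}$; it is not handed to us in the final model. The heart of the argument is therefore a forcing-absoluteness lemma, modeled on the reflection used in Lemma~\ref{character} and in the Calibre Lemma: every countable $A\subseteq X_H$ in $V[G][G_2]$ lies in an intermediate model $V[G][G_2\cap Q_\xi]$ by Proposition~\ref{MA}(3)(b), and one must show, via the ccc-ness and the combinatorics of \cite{JKS2009}*{4.2}, that the tail-absorption alternative is forced and persists upward through the remaining forcing.
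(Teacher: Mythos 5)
Your assembly of the peripheral pieces is sound and largely parallels the paper: items (1)--(3) are indeed already established by the preceding results; item (5) does follow once (1)--(4) are in hand (the paper gets countable tightness of the compactification directly from item (3) and the Calibre-$\omega_1$ corollary of Section~\ref{compact}, which is shorter than your free-sequence/Arhangel'skii route, but both are fine); and your derivation of item (4) from two inputs --- the preserved Calibre-$\omega_1$ lemma for the $\omega_1$-sized sets, and the dichotomy of Proposition~\ref{mainProp}(4) (``the closure of a countable set is compact or contains a tail $(\omega_2\setminus\alpha)\times\mathbb{C}$'') for countable compactness, non-sequentiality, and normality --- is exactly the skeleton the paper uses.

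However, the proposal stops precisely where the paper's proof begins. The transfer of Proposition~\ref{mainProp}(4) to $V[G][G_2]$ is not a routine reflection that can be deferred to ``ccc-ness and the combinatorics of \cite{JKS2009}''; it is the entire technical content of the theorem, and your final paragraph names the goal without supplying the idea that makes it true. (Indeed, as you observe, closures genuinely grow under the iteration, so nothing ``persists upward'' for free.) What the paper actually does: given a $Q_{\omega_2}$-name $\dot A$ for a countable set and $q$ forcing its closure non-compact, pass by Proposition~\ref{MA}(3)(b) to a $Q_\xi$-name $\dot B$ with the same valuation, where $|Q_\xi|\leq\aleph_1$, and observe that avoidance of the canonical basic neighborhoods $\bigcap\{U(\lambda,[n,t(n)]):1\leq n\leq k\}\setminus U[F]$ is absolute between the two extensions. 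Let $S$ be the set of $\lambda$ for which some $q_\lambda\leq q$ in $Q_\xi$ forces $\dot B$ to miss such a basic set. If $\omega_2\setminus S$ is unbounded, the ground-model Fact (the closure of $(\omega_2\setminus S)\times\mathbb{C}$ contains a tail) finishes the argument. Otherwise $S$ contains a final segment, and the key step --- absent from your sketch --- is the pressing-down lemma: since there are only $\aleph_1$ many conditions $q_\lambda$ and tuples $(k_\lambda,t_\lambda)$, and $F_\lambda\subset\lambda$ is regressive, there is a stationary set $S_1\subseteq S$ of cofinality-$\omega_1$ limits on which $(q_\lambda,k_\lambda,t_\lambda,F_\lambda)=(\bar q,k,t,F)$ is constant. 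The open set $W=\bigcup\{\bigcap\{U(\lambda,[n,t(n)]):1\leq n\leq k\}\setminus U[F]:\lambda\in S_1\}$ then has compact complement in $X_H$ (by the $V[G]$ dichotomy), so $X_H\subseteq W\cup U[F_1]$ for some finite $F_1$, whence $\bar q$ forces $\dot B\subseteq U[F\cup F_1]$, a compact set --- contradicting the choice of $q$. Without this stationarity argument (or a genuine substitute), the proposal is an outline of reductions rather than a proof.
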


\begin{proof}
We have already established  items (1), (2), and (3).
Item (3) implies that the one-point compactification of $X_H$
has countable tightness. Item (5) is an immediate consequence
of items (1)-(4). So it remains to prove item (4). This will
require a forcing proof over the model $V[G]$.
Before we begin, let us notice that:

\begin{fact}
In $V[G]$, if
$S$
is an unbounded subset of $\omega_2$, then the closure
of $S\times \mathbb{C}$ will contain
 $(\omega_2\setminus\alpha)\times \mathbb{C}$ for some
 $\alpha\in \omega_2$.
 \end{fact}

  This follows from the property in  item (4)
  because of
  the facts that
  $S\times\mathbb{C}$ does not have compact closure
  and that the one-point compactification of $X_H$
  has countable tightness.\bigskip

Recall, from Proposition \ref{mainProp}, that, in $V[G]$,
the closure in $X_H$ of
each countable subset of $X_H$ is either compact
or contains $(\omega_2\setminus\alpha)\times\mathbb{C}$ for some
$\alpha\in \omega_2$. We will prove that this statement remains
true in $V[G][G_2]$. Before doing so we note that item (4)
is a consequence of this claim. It is immediate from (4) that
 $X_H$ is countably compact. The fact that then $X_H$ is initially
 $\omega_1$-compact follows from the fact that a compact
  $P$-space has no converging $\omega_1$-sequences. The fact
  that $X_H$ is normal is noted in \cite{JKS2009}*{\S8} and is similar
  to the proof that an Ostaszewski space is normal.  Indeed,
  it follows from item (4) that for any two   disjoint closed
  subsets of $X_H$ at least one of them is compact.

  Let $\dot A$ be a $Q_{\omega_2}$-name of a countable
  subset of $X_H$.  Assume there is a $q\in G_2$
  such that $q$ forces that the closure of $\dot A$ is not
  compact. Note that  $q$ forces that for all finite $F\subset\omega_2$,
     $\dot A\setminus U[F]$ is not empty. Also, that the closure
     of $\dot A$ is forced to miss $\{\alpha\}\times\mathbb{C}$ if
     and only if $\dot A$ is forced to miss
      $U(\alpha,0)\setminus U[F]$ for some finite $F\subset\alpha$.

   By Proposition \ref{MA},
  there is a
  $\xi<\omega_2$ and a $Q_\xi$-name $\dot B$
  satisfying that
  $\val_{G_2\cap Q_\xi}(\dot B)$ is equal
  to $\val_{G_2}(\dot A)$.
  By possibly choosing a larger value
  of $\xi$, we may assume that $q\in Q_\xi$.
  We first note that it suffices to work with $\dot B$ and
  the poset $Q_\xi$.

  \begin{fact} For each $\lambda\in\omega_2$, $k\in\mathbb {N}$,
  and $t:\{1,\ldots,k\}\mapsto 2$,
  and finite $F\subset\lambda$,
  the following are equivalent:
  \begin{enumerate}
   \item $\val_{G_2}(\dot A)$ misses
   $\bigcap \{U(\lambda,[n,t  (n)]) : 1\leq n\leq k\}
   \setminus U[F]$,
   \item $\val_{G_2\cap Q_\xi}(\dot B)$ misses
   $\bigcap \{U(\lambda,[n,t  (n)]) : 1\leq n\leq k\}
   \setminus U[F]$.
    \end{enumerate}
     \end{fact}

  We must prove that the closure of $\val_{G_2\cap Q_\xi}(\dot B)$
  contains $\{\lambda\}\times \mathbb {C}$ for a co-initial
   set of $\lambda\in
  \omega_2$. This means that
  we are interested in the set of $\lambda\in\omega_2$ such that
 $\{\lambda\}\times\mathbb{C}$ is not contained in the closure
 of $\dot B$. For any such $\lambda$, there must be a
  $q\geq q_\lambda\in Q_\xi$, an
 integer $k_\lambda$ and a function
  $t_\lambda:\{1,\ldots, k_\lambda\}\mapsto 2$, and a finite
   $F_\lambda\subset \lambda$ such that
   $q_\xi$ forces that  $\dot B $ is disjoint
   from $\bigcap \{U(\lambda,[n,t_\lambda (n)]) : 1\leq n\leq k_\lambda\}
   \setminus U[F_\lambda]$.
   Let $S$ denote the set of all $\lambda$ such that
   such a sequence $\langle q_\lambda,k_\lambda,t_\lambda, F_\lambda
   \rangle$ exists.

   If $\alpha\notin S$, then $q$ forces that $\{\alpha\}\times\mathbb{C}$
   is contained in the closure of $\dot B$.  Of course
   this also means that $q$ forces that the closure of $\dot B$
   contains the closure of $(\omega_2\setminus S)\times \mathbb{C}$.
In this case, Fact 1 implies that there is an $\alpha\in\omega_2$
such that the
 closure of $(\omega_2\setminus S)\times\mathbb{C}$,
  and therefore of $\val_{G_2\cap Q_\xi}(\dot B)$~,
will contain $(\omega_2\setminus\alpha)\times \mathbb{C}$ as required.
 Therefore we conclude that
 if $S$ is unbounded, then $\omega_2\setminus S$ is bounded.

 We assume that $S$ is unbounded and obtain a contradiction.
Since $Q_\xi$ has cardinality $\aleph_1$,
 it follows from the pressing down lemma
 that  there is a stationary
 subset $S_1$ of $S$, consisting of limits with cofinality $\omega_1$,
 and a tuple
  $\langle \bar q ,k ,t , F
   \rangle$ such that, for all $\lambda\in S_1$
   \begin{enumerate}
   \item $\bar q = q_\lambda$, $k = k_\lambda$, $t = t_\lambda$, and
   \item $F= F_\lambda$.
   \end{enumerate}
Let $W$ be the union of the family
   $\{ \bigcap \{ U(\lambda,[n,t(n)]):1\leq n\leq k\}\setminus U[F] :\lambda\in
   S_1\}$. Since $W$ is open and
   the property of item (4) holds in $V[G]$, it follows
   that $X_H\setminus W$ is compact. Choose any finite $F_1\subset
   \omega_2$ so that $X_H\subset W\cup U[F_1]$. It now follows
   that $\bar q$ forces that $\dot B$ is contained in $U[F\cup F_1]$,
   which is a contradiction.
   \end{proof}

\textbf{Acknowledgement.} The authors would like to express their gratitude to the referee for all
his/her valuable comments and suggestions which lead to the improvements of the paper.

%\begin{prop} If $P\geq_T Q$ and $Q$ is Dedekind complete, then there is a order-preserving Tukey map from $Q$ to $P$\end{prop}

%\begin{proof} Take $\phi: P\rightarrow Q$ to be an order-preserving Tukey quotient map. Fix $q\in Q$. Define $\psi(q)=\min\{p: \phi{p}\geq q\}$. \end{proof}

%\begin{prop} Assume $\mathfrak{b}=\omega_1$. Any scattered compact space with an $\omega^\omega$-base and countable scattered height is countable, hence metrizable.  \end{prop}

%\begin{proof} Under the assumption, $\omega_1\leq_T \omega^\omega$. \end{proof}

\begin{bibdiv}

\def\cprime{$'$}

\begin{biblist}

  \bib{Banakh2019}{article}{
   author={Banakh, Taras},
   title={Topological spaces with an $\omega^{\omega}$-base},
   journal={Dissertationes Math.},
   volume={538},
   date={2019},
   pages={141},
   issn={0012-3862},
   review={\MR{3942223}},
   doi={10.4064/dm762-4-2018},
}

  \bib{BL18}{article}{
   author={Banakh, Taras},
   author={Leiderman, Arkady}
   title={$\omega^\omega$-dominated function spaces and $\omega^\omega$-bases in free objects of topological algebra},
   journal={Topology and Appl.},
   volume={241},
   date={2018},
   pages={203--241},
   %issn={0012-3862},
   %review={\MR{3942223}},
   %doi={10.4064/dm762-4-2018},
}

  \bib{Balogh1989}{article}{
   author={Balogh, Zolt\'{a}n T.},
   title={On compact Hausdorff spaces of countable tightness},
   journal={Proc. Amer. Math. Soc.},
   volume={105},
   date={1989},
   number={3},
   pages={755--764},
   issn={0002-9939},
   review={\MR{930252}},
   doi={10.2307/2046929},
}

\bib{COT2011}{article}{
   author={Cascales, B.},
   author={Orihuela, J.},
   author={Tkachuk, V. V.},
   title={Domination by second countable spaces and Lindel\"{o}f
   $\Sigma$-property},
   journal={Topology Appl.},
   volume={158},
   date={2011},
   number={2},
   pages={204--214},
   issn={0166-8641},
   review={\MR{2739891}},
   doi={10.1016/j.topol.2010.10.014},
}

\bib{Douwen84}{article}{
   author={van Douwen, E.K.},
   title={The Integers and Topology},
   journal={Handbook of Set-Theoretic Topology, North-Holland, Amsterdam},
   %volume={538},
   date={1984},
   pages={111--167},
   %issn={0012-3862},
   %review={\MR{3942223}},
   %doi={10.4064/dm762-4-2018},
}

  \bib{Dow2016}{article}{
   author={Dow, Alan},
   title={Generalized side-conditions and Moore-Mr\'{o}wka},
   journal={Topology Appl.},
   volume={197},
   date={2016},
   pages={75--101},
   issn={0166-8641},
   review={\MR{3426909}},
   doi={10.1016/j.topol.2015.10.016},
}

\bib{Feng2019}{article}{
    AUTHOR = {Feng, Ziqin},
     TITLE = {Spaces with a {$\Bbb {Q}$}-diagonal},
   JOURNAL = {Fund. Math.},
 % FJOURNAL = {Fundamenta Mathematicae},
    VOLUME = {245},
      YEAR = {2019},
    NUMBER = {3},
     PAGES = {305--320},
      ISSN = {0016-2736},
  % MRCLASS = {54E35 (54B10 54D30)},
  MRNUMBER = {3914945},
%MRREVIEWER = {Anubha Jindal},
       DOI = {10.4064/fm615-8-2018},
 %      URL = {https://doi.org/10.4064/fm615-8-2018},
}

\bib{GKL15}{article}{
   author={Gabriyelyan, Saak}
   author={K\c{a}kol, Jerzy}
   author={Leiderman, A.G.},
   title={On topological groups with a small base and metrizability},
   journal={Fund. Math.},
   volume={229},
   date={2015},
   number={2},
   pages={129--158},
   %issn={0002-9947},
   %review={\MR{1467471}},
   doi={10.4064/fm229-2-3},
}

 \bib{GM16}{article}{
   author={Gartside, Paul},
   author={Mamatelashvili, Ana}
   title={The Tukey order on compact subsets of separable metric spaces},
   journal={J. Symb. Log.},
   number={1},
   volume={81},
   date={2016},
   pages={18--200},
   issn={0022-4812},
   review={\MR{3471135}},
   doi={10.1017/jsl.2015.49},
}

 \bib{GM17}{article}{
   author={Gartside, Paul},
   author={Mamatelashvili, Ana}
   title={Tukey order, calibres and the rationals},
   journal={Annals of Pure and Applied Logic},
   number={1},
   volume={172},
   date={2021},
   pages={102873},
   %issn={0166-8641},
   %review={\MR{3426909}},
   %doi={10.1016/j.topol.2015.10.016},
}

\bib{JKS2009}{article}{
   author={Juh\'{a}sz, Istv\'{a}n},
   author={Koszmider, Piotr},
   author={Soukup, Lajos},
   title={A first countable, initially $\omega_1$-compact but non-compact
   space},
   journal={Topology Appl.},
   volume={156},
   date={2009},
   number={10},
   pages={1863--1879},
   issn={0166-8641},
   review={\MR{2519221}},
   doi={10.1016/j.topol.2009.04.004},
}

\bib{JS1992}{article}{
   author={Juh\'{a}sz, Istv\'{a}n},
   author={Szentmikl\'{o}ssy, Z.},
   title={Convergent free sequences in compact spaces},
   journal={Proc. Amer. Math. Soc.},
   volume={116},
   date={1992},
   number={4},
   pages={1153--1160},
   issn={0002-9939},
   review={\MR{1137223}},
   doi={10.2307/2159502},
}

\bib{Kosz1999}{article}{
   author={Koszmider, Piotr},
   title={Forcing minimal extensions of Boolean algebras},
   journal={Trans. Amer. Math. Soc.},
   volume={351},
   date={1999},
   number={8},
   pages={3073--3117},
   issn={0002-9947},
   review={\MR{1467471}},
   doi={10.1090/S0002-9947-99-02145-5},
}

\bib{Kunen1980}{book}{
   author={Kunen, Kenneth},
   title={Set theory},
   series={Studies in Logic and the Foundations of Mathematics},
   volume={102},
   note={An introduction to independence proofs},
   publisher={North-Holland Publishing Co., Amsterdam-New York},
   date={1980},
   pages={xvi+313},
   isbn={0-444-85401-0},
   review={\MR{597342}},
}

\bib{LPT17}{article}{
   author={Leiderman, A.G.},
   author={Pestov, V.}
   author={Tomita, A.H.},
   title={On topological groups admitting a base at the identity indexed by $\omega^\omega$},
   journal={Fund. Math.},
   volume={238},
   date={2017},
   %number={8},
   pages={79--100},
   %issn={0002-9947},
   %review={\MR{1467471}},
   %doi={10.1090/S0002-9947-99-02145-5},
}

\bib{AMETD}{book}{
   author={Mamatelashvili, Ana}
   note={Thesis (Ph.D.)--University of Pittsburgh}
   title={Tukey order on sets of compact subsets of topological spaces}
   date={2014}
   pages={141}
   ISBN = {978-1321-42044-2}
   publisher={ProQuest LLC, An Arbor, MI}
   }

\bib{San2020}{article}{
   author={S\'{a}nchez, David Guerrero},
   %author={Pestov, V.}
   %author={Tomita, A.H.},
   title={Spaces with an $M$-diagonal},
   journal={Rev. R. Acad. Cienc. Exactas F\'{i}s. Nat. Ser. A Mat. Racsam},
   volume={114},
   date={2020},
   number={1},
   pages={16--24},
   issn={1578-7303},
   review={\MR{4039696}},
   doi={10.1007/S13398-019-00745-x}
}

 \bib{SF20}{article}{
   author={Shen, Rongxin},
   author={Feng, Ziqin}
   title={On $\omega^\omega$-bases and $\omega^\omega$-weak bases},
   journal={Houston J. Math.},
   number={2},
   volume={46},
   date={2020},
   pages={507--518},
   %issn={0166-8641},
   %review={\MR{3426909}},
   %doi={10.1016/j.topol.2015.10.016},
}

\bib{Tuk40}{book}{
   author={Tukey, John W.},
   title={Convergence and Uniformity in Topology},
   series={Annals of Mathematics Studies, no. 2},
   publisher={Princeton University Press, Princeton, N. J.},
   date={1940},
   pages={ix+90},
   review={\MR{0002515}},
}

\end{biblist}
\end{bibdiv}
\end{document}